\documentclass[a4paper,12pt,reqno]{amsart}
\usepackage{amssymb, amsmath, amsfonts, amscd}
\usepackage{mathrsfs, latexsym}
\usepackage[all,ps,cmtip]{xy}
\usepackage{array, longtable}
\usepackage{bm, enumitem}
\usepackage{xcolor}

\title{Construction of minimal varieties from quasi-smooth weighted complete intersections}
\date{\today}

\author{Pinxian Bie }
\address{\rm School of Mathematical Sciences, Fudan University, Shanghai 200433, China}
\email{23110180002@fudan.edu.cn}


\newcommand{\bC}{{\mathbb C}}
\newcommand{\bQ}{{\mathbb Q}}
\newcommand{\bP}{{\mathbb P}}

\newcommand{\rounddown}[1]{\lfloor{#1}\rfloor}

\newcommand\Vol{\text{\rm Vol}}

\newcommand\OO{{\mathcal{O}}}

\newcommand{\modr}{\text{\rm mod}}

\newtheorem{thm}{Theorem}[section]
\newtheorem{lem}[thm]{Lemma}

\newtheorem{prop}[thm]{Proposition}

\theoremstyle{definition}
\newtheorem{defn}[thm]{Definition}

\newtheorem{exmp}[thm]{Example}

\newtheorem{rem}[thm]{Remark}

\theoremstyle{remark}


\begin{document}
\begin{abstract} 
 This paper is devoted to the generalization of the construction of minimal varieties from the previous work of Meng Chen, Chen Jiang and Binru Li. We first establish several effective nefness criterions for the canonical divisor of weighted blow-ups over a weighted complete intersection, we both consider the high codimensional case and the blowing up several points case, from which we construct plenty of new minimal $3$-folds including $79$ families of minimal $3$-folds of general type, several infinite series of minimal $3$-folds of Kodaira dimension $2$, $16$ families of minimal $3$-folds of general type on or near the Noether lines. 
\end{abstract}
\maketitle

\pagestyle{myheadings}
\markboth{\hfill P. Bie\hfill}{\hfill Construction of Minimal varieties from weighted complete intersections\hfill}
\numberwithin{equation}{section}

\section{Introduction}

In the study of birational classification of algebraic varieties, the Minimal Model Program (MMP) stands as one of the most profound achievements of modern algebraic geometry. This framework establishes that every projective variety admits a birational model that is either a minimal variety (characterized by nef canonical divisor and $\mathbb{Q}$-factorial terminal singularities) or a Mori fiber space. While foundational results like \cite{KMM, K-M, BCHM} have solidified the theoretical foundations, significant challenges remain - most notably the Abundance conjecture.

Beyond these theoretical considerations, an equally vital aspect of classification theory involves the explicit construction of minimal models and the precise computation of their birational invariants. Such concrete realizations serve as crucial testing grounds for theoretical predictions or supporting examples and often reveal unexpected phenomena in higher dimensions. The historical trajectory of this investigative approach can be traced from Reid's classification of weighted K3 surfaces \cite{Reid79} through Iano-Fletcher's systematic enumeration of threefold examples \cite{Fle00}.

The ongoing pursuit of constructing and analyzing minimal varieties - evidenced by works including \cite{JK01, CR02, Rei05, BKR, BR1, B-K, BR2} and the more recent works \cite{TW, ETW, HZ25} - continues to yield both theoretical insights and computational challenges. Recent advances in this direction have further illuminated the delicate interplay between singularity theory, pluricanonical systems, and birational invariants.

Explicit construction of minimal varieties in dimension at least $3$ faces several difficulties. A naive way of constructing desired varieties could be, starting from a singular complete intersection in $\bP^n$, to obtain a smooth model by resolving singularities and to calculate their birational invariants. Unlike the surface case, this would be very difficult in higher dimensions as MMP and birational morphisms are more complicated than those in the surface case. Another difficult part is to determine the nefness condition of the canonical divisor, which requires some subtle examinations.

In this paper, we improve and generalize the privous work of Meng Chen, Chen Jiang and Binru Li in \cite{CJL24}, where they considered a special construction starting from a well-formed quasismooth weighted hypersurface $X$ with only isolated singularities, which has exactly one non-canonical singularity $P$. One takes a weighted blow-up along the center $P\in X$ to obtain a higher model $Y$. 
To make sure that $Y$ is almost minimal, they proved an effective nefness criterion:

\begin{thm}[{cf. \cite[Theorem 1.3]{CJL24}}]\label{nefness 1}
Let $X=X^n_d\subset \mathbb{P}(b_1, \dots, b_{n+2})$ be an $n$-dimensional well-formed quasismooth general hypersurface of degree $d$ with $\alpha=d-\sum_{i=1}^{n+2}b_i>0$ where $b_1, \dots, b_{n+2}$ are not necessarily sorted by size. Denote by $x_1,\dots,x_{n+2}$ the homogenous coordinates of $\mathbb{P}(b_1, \dots, b_{n+2})$.
Denote by $\ell$ the line $(x_1=x_2=\dots =x_{n}=0)$ in $\mathbb{P}(b_1, \dots, b_{n+2})$. Suppose that $X\cap\ell$ consists of finitely many points and take $Q\in X\cap \ell$.
Assume that $X$ has a cyclic quotient singularity of type $\frac{1}{r}(e_1,\dots, e_n)$ at $Q$ where $e_1, \dots, e_n>0$, $\gcd(e_1, \dots, e_n)=1$, $\sum_{i=1}^ne_i<r$ and that $x_1,\dots,x_n$ are also the local coordinates of $Q$ corresponding to the weights $\frac{e_1}{r},\dots,\frac{e_n}{r}$ respectively. Let $\pi: Y\to X$ be the weighted blow-up at $Q$ with weight $(e_1,\dots, e_n)$.
Suppose that there exists an index $k\in \{1,\dots,n\}$ such that 
\begin{enumerate}
\item $\alpha e_{j} \geq b_{j}(r-\sum_{i=1}^ne_i)$ for each $j\in \{1, \dots, \hat{k},\dots, n\}$;
\item $\alpha dre_{k}\geq b_{k}b_{n+1}b_{n+2}(r-\sum_{i=1}^ne_i);$
\item a general hypersurface of degree $d$ in $\mathbb{P}(b_{k}, b_{n+1}, b_{n+2})$ is irreducible;
\item $\mathbb{P}(e_1,\dots,e_n)$ is well-formed.
\end{enumerate}
Then $K_Y$ is nef and $\nu(Y)\geq n-1$ where $\nu(Y)$ denotes the numerical Kodaira dimension. \end{thm}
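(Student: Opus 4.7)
The plan is to verify $K_Y\cdot C\ge 0$ for every irreducible curve $C\subset Y$, and then to deduce the numerical Kodaira dimension bound. The main tool is the standard discrepancy formula for the weighted blow-up,
\[
K_Y \;=\; \pi^{*}K_X+\frac{\sum_{i=1}^{n}e_i-r}{r}\,E \;=\; \pi^{*}K_X-\beta E, \qquad \beta:=\frac{r-\sum_{i=1}^{n}e_i}{r}>0,
\]
combined with the fact that $K_X\sim_{\bQ}\alpha H|_X$ is ample (since $\alpha>0$ and $X$ is well-formed quasismooth). Condition (4) guarantees the expected intersection theory on $E\cong \bP(e_1,\dots,e_n)$; by adjunction $E|_E\equiv \OO_E(-r)$ and $K_E\equiv \OO_E(-\sum_i e_i)$, so
\[
K_Y|_E \;=\; K_E-E|_E \;=\; \OO_E\bigl(r-\textstyle\sum_i e_i\bigr)
\]
is ample on $E$ since $r>\sum_i e_i$. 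This already handles nefness against every curve contained in the exceptional locus.

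For curves not contained in $E$: any such $C$ is the proper transform of some $\bar C\subset X$, and the projection formula gives $\pi^{*}K_X\cdot C=K_X\cdot \bar C>0$ by ampleness of $K_X$. Thus $K_Y\cdot C= K_X\cdot \bar C-\beta E\cdot C$, so nefness is automatic when $\bar C\not\ni Q$. The nontrivial case is $\bar C\ni Q$. The heart of the argument reduces nefness along all such $\tilde{\bar C}$ to the finitely many strict transforms of the coordinate curves
\[
C_j \;:=\; X\cap \Lambda_j,\qquad \Lambda_j \;:=\; \bP(b_j,b_{n+1},b_{n+2}) \;=\; \bigl(x_i=0:\, i\in\{1,\dots,n\}\setminus\{j\}\bigr),
\]
using that locally at $Q$ the curve $C_j$ is the $x_j$-axis, so $\tilde C_j$ meets $E$ only at the coordinate point $p_j\in \bP(e_1,\dots,e_n)$. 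The reduction proceeds via a Mori-cone/moving-curve argument writing a general curve through $Q$ as a $\bQ$-positive combination of the $\tilde C_j$'s together with curves in $E$. Condition (3) enters here to ensure that $C_k$ is irreducible, so the intersection numbers attached to it are cleanly controlled.

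The intersection estimate for each $\tilde C_j$ takes the form
\[
K_Y\cdot \tilde C_j \;=\; \frac{\alpha d}{b_j b_{n+1}b_{n+2}} \;-\; \beta\cdot E\cdot \tilde C_j,
\]
where $E\cdot \tilde C_j$ is read off from the orbifold local geometry at $p_j$ (combining the $\tfrac1r(e_1,\dots,e_n)$-quotient at $Q$ with the $\mu_{e_j}$-structure of the weighted blow-up). For $j=k$, with $C_k$ irreducible by condition (3), the direct estimate translates $K_Y\cdot \tilde C_k\ge 0$ into exactly condition (2). For $j\ne k$, the possible reducibility of $C_j$ forces a componentwise analysis, and the stronger-looking inequality (1) emerges as the cleanest sufficient condition valid on every component. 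Finally, $\nu(Y)\ge n-1$ follows by computing $K_Y^{n-1}\cdot \pi^{*}H=K_X^{n-1}\cdot H>0$ for any ample $H$ on $X$, since the exceptional contraction annihilates all mixed terms $(\pi^{*}\gamma)^{k}\cdot E^{\ell}$ with $k,\ell\ge 1$.

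The main technical obstacle is the reduction step: one must rigorously replace the infinite family of curves through $Q$ by the finitely many coordinate ones, which in \cite{CJL24} is carried out by a delicate geometric analysis. Equally subtle is unpacking the asymmetry in the hypotheses --- condition (3) controls the irreducibility of $C_k$, while its absence for $j\ne k$ is exactly what forces condition (1) to take its stronger-looking form, tuned to survive the componentwise reducibility analysis in those directions.
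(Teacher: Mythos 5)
Your setup (the discrepancy formula, ampleness of $K_Y|_E$ via condition (4), and the observation that only curves whose image passes through $Q$ are at issue) matches the paper, but the heart of the proof is missing and the mechanism you sketch for it would not work. You propose to handle curves through $Q$ by ``writing a general curve through $Q$ as a $\bQ$-positive combination of the $\tilde C_j$'s together with curves in $E$'' via a Mori-cone argument; no such decomposition is available --- there is no reason for the cone of curves of $Y$ to be generated by the $n$ coordinate curves together with curves in $E$, and neither this paper nor \cite{CJL24} argues that way. The actual mechanism is divisorial rather than curve-theoretic: for each $j\neq k$ one writes
\[
K_Y\sim_{\bQ}\frac{\alpha}{b_j}H'_j+t_jE,\qquad t_j=\frac{\alpha e_j-b_j(r-\sum_{i}e_i)}{b_jr},
\]
where $H'_j$ is the strict transform of $(x_j=0)\cap X$, and condition (1) says precisely that $t_j\geq 0$. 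Hence a curve $C$ with $K_Y\cdot C<0$ and $C\not\subset E$ must satisfy $H'_j\cdot C<0$, i.e.\ $C\subset H'_j$, for every $j\neq k$. The support of $\bigcap_{j\neq k}H'_j$ is a single irreducible curve: its image in $X$ is the general degree-$d$ curve in $\bP(b_k,b_{n+1},b_{n+2})$, irreducible by condition (3), and it meets $E$ only at one coordinate point. Writing $\prod_{j\neq k}H'_j=tC$ with $t>0$, the inequality $K_Y\cdot C<0$ would force $K_Y\cdot\prod_{j\neq k}H'_j<0$, whereas a direct expansion gives
\[
K_Y\cdot\prod_{j\neq k}H'_j=\frac{\alpha d}{b_kb_{n+1}b_{n+2}}-\frac{r-\sum_ie_i}{re_k}\geq 0
\]
by condition (2), a contradiction.

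This also corrects your reading of the hypotheses. Condition (1) is not a componentwise intersection bound on the possibly reducible curves $C_j$ for $j\neq k$ --- those curves never enter the proof at all. It is the effectivity condition on the coefficient of $E$ that traps a $K_Y$-negative curve inside $H'_j$; the asymmetry between $k$ and the remaining indices reflects that only the $n-1$ divisors $H'_j$ with $j\neq k$ are used to cut out the single test curve, while the index $k$ carries the closing numerical inequality (2). Your argument for $\nu(Y)\geq n-1$ via $K_Y^{n-1}\cdot\pi^*H=K_X^{n-1}\cdot H>0$ is fine; the paper instead uses $(K_Y^{n-1}\cdot E)>0$ from ampleness of $K_Y|_E$, which is equivalent in effect.
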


In practice they also require the resulting $Y$ has at worst canonical singularities, so $Y$ will correspond to a desired minimal variety of Kodaira dimension at least $n-1$, which is the case at least in dimension $3$. It is natural to ask whether one can generalize the setting of Theorem~\ref{nefness 1}, so that we could expect to construct some new examples of minimal varieties. In fact the generalization can be done in several directions. First, a natural generalization is to generalize the previous theorem to the case of weighted complete intersections, which we will prove a nefness criterion similar to Theorem~\ref{nefness 1}. Second, it's also natural to ask whether we can blow up more than one point but still preserve the nefness criteria, this however requires the blown up points have some special positions, and some partial results have already been discussed in \cite{CJL24}. Finally, in some circumstances when we perform a weighted blow up centered at a non-canonical cyclic quotient singularity point, there may still exist another non-canonical singularity on the exceptional divisor, so one may ask when a sequence of weighted blow ups can preserve the nefness of canonical divisors while making the resulting variety has only canonical singularity. We state our theorems as below:

\begin{thm}[Nefness criterion I]\label{nefness 2}
Let $X=X^n_{d_1,d_2,...d_c}\subset \mathbb{P}(b_1, \dots, b_{n+c+1})$ be an $n$-dimensional well-formed quasismooth general complete intersection of multi-degrees $(d_1,...d_c)$ with $\alpha=\sum_{l=1}^{c} d_l-\sum_{j=1}^{n+2}b_j>0$ where $b_1, \dots, b_{n+c+1}$ are not necessarily sorted by size. Denote by $x_1,\dots,x_{n+c+1}$ the homogeneous coordinates of $\mathbb{P}(b_1, \dots, b_{n+c+1})$.
Denote by $\Pi$ the $c$-dimensional linear subspace $(x_1=x_2=\dots =x_{n}=0)$ in $\mathbb{P}(b_1, \dots, b_{n+c+1})$. Suppose that $X\cap\Pi$ consists of finitely many points and take $Q\in X\cap \Pi$.
Assume that $X$ has a cyclic quotient singularity of type $\frac{1}{r}(e_1,\dots, e_n)$ at $Q$ where $e_1, \dots, e_n>0$, $\gcd(e_1, \dots, e_n)=1$, $\sum_{i=1}^ne_i<r$ and that $x_1,\dots,x_n$ are also the local coordinates of $Q$ corresponding to the weights $\frac{e_1}{r},\dots,\frac{e_n}{r}$ respectively. Let $\pi: Y\to X$ be the weighted blow-up at $Q$ with weight $(e_1,\dots, e_n)$.
Suppose that either there exists an index $k\in \{1,\dots,n\}$ such that 
\begin{enumerate}
\item $\alpha e_{j} \geq b_{j}(r-\sum_{i=1}^ne_i)$ for each $j\in \{1, \dots, \hat{k},\dots, n\}$;
\item $\alpha (\prod_{l=1}^cd_l)re_{k}\geq b_{k}(\prod_{j=n+1}^{n+c+1}b_{j})(r-\sum_{i=1}^ne_i);$
\item a general weighted complete intersection curve of multi-degrees $(d_1,..d_c)$ in $\mathbb{P}(b_{k}, b_{n+1},... b_{n+c+1})$ is irreducible;
\item $\mathbb{P}(e_1,\dots,e_n)$ is well-formed.
\end{enumerate}
Or we have
\begin{enumerate}
    \item $\alpha e_{j} \geq b_{j}(r-\sum_{i=1}^ne_i)$ for each $j\in \{1, \dots, n\}$;
    \item $Z_{d_1,d_2,...d_c} \subset \mathbb{P}( b_{n+1},... b_{n+c+1})$ consists of finite points;
    \item $\mathbb{P}(e_1,\dots,e_n)$ is well-formed.
\end{enumerate}
Then $K_Y$ is nef and $\nu(Y)\geq n-1$ where $\nu(Y)$ denotes the numerical Kodaira dimension. \end{thm}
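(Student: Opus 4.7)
The plan is to follow the overall strategy of Theorem \ref{nefness 1} in \cite{CJL24}, adapting the intersection-number bookkeeping from a single weighted hypersurface to the complete intersection $X = X^n_{d_1,\dots,d_c}$. I begin with the discrepancy formula
\[
K_Y = \pi^* K_X - \frac{\beta}{r} E = \alpha \pi^* H - \frac{\beta}{r} E, \qquad \beta := r - \sum_{i=1}^n e_i > 0,
\]
where $H$ is the hyperplane class on $X$ and I use the complete intersection adjunction $K_X = \alpha H$. Since each coordinate divisor $H_j = (x_j = 0)|_X \sim b_j H$ has local weighted multiplicity $e_j/r$ at the quotient singularity $Q$, the strict transform satisfies $\tilde H_j = b_j \pi^* H - \frac{e_j}{r} E$, and eliminating $\pi^* H$ yields the central reformulation
\[
K_Y = \frac{\alpha}{b_j}\, \tilde H_j + \frac{\alpha e_j - b_j \beta}{r b_j}\, E, \qquad j = 1, \dots, n.
\]

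I then check $K_Y \cdot C \geq 0$ for every irreducible curve $C \subset Y$, splitting into three cases. If $C \subset E$, condition (4) ensures $\mathbb{P}(e_1,\dots,e_n)$ is well-formed, and the normal bundle computation $E|_E = -rh$ (via $\pi^* H|_E = 0$ and $\tilde H_j|_E = e_j h$) combined with adjunction gives $K_Y|_E = \beta h$, which is ample on $E$. If $C \not\subset E$ and $C \not\subset \tilde H_j$ for some admissible index $j$, condition (1) (or (1')) makes the $E$-coefficient in the displayed reformulation non-negative, so $K_Y$ becomes an effective $\mathbb{Q}$-combination and $K_Y \cdot C \geq 0$. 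The remaining curves under the first set of hypotheses are those contained in $\bigcap_{j \neq k} \tilde H_j$; such a curve maps into $X \cap \Pi_k$, and condition (3) identifies this set-theoretic locus with a single irreducible curve $C_k$, so the curve in question is exactly $\tilde C_k$. A direct computation using Bezout in the ambient weighted projective space gives
\[
H \cdot C_k = \frac{\prod_l d_l}{b_k \prod_{j=n+1}^{n+c+1} b_j}, \qquad E \cdot \tilde C_k = \frac{1}{e_k},
\]
whence $K_Y \cdot \tilde C_k \geq 0$ is precisely the content of condition (2).

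For the second set of hypotheses, condition (2') forces $X \cap \Pi$ itself to be zero-dimensional, which removes the remaining case altogether: any curve $C \not\subset E$ must fail to lie in some $\tilde H_j$, and condition (1'), which is now available for all $j \in \{1,\dots,n\}$, closes the argument. The claim $\nu(Y) \geq n-1$ then follows from the computation $K_Y^{n-1} \cdot \pi^* H = \alpha^{n-1} H^n > 0$, where the projection formula kills every cross term containing $E \cdot (\pi^* H)^{\geq 1}$. I expect the main obstacle to be the careful derivation (rather than the logical structure) of the weighted intersection numbers $E|_E = -rh$ and $E \cdot \tilde C_k = 1/e_k$: both depend on the orbifold local coordinates at $\tfrac{1}{r}(e_1,\dots,e_n)$ and on the chosen blow-up weights, and they must be internally consistent with the factor $e_j/r$ appearing in $\tilde H_j$. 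A secondary issue is verifying the set-theoretic identification $X \cap \Pi_k = C_k$ via condition (3), which invokes a Bertini-type irreducibility argument for weighted complete intersections in codimension $c$ and may need additional care compared to the hypersurface case treated in \cite{CJL24}.
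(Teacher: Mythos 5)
Your proposal is correct and follows essentially the same route as the paper: the same decomposition $K_Y \sim_{\mathbb{Q}} \frac{\alpha}{b_j}H'_j + t_j E$, the same reduction of a hypothetical $K_Y$-negative curve to the irreducible curve cut out by the coordinate divisors (using condition (3)), and the same final inequality equivalent to condition (2). The only cosmetic difference is that you evaluate $K_Y\cdot \tilde C_k$ via the explicit numbers $H\cdot C_k$ and $E\cdot\tilde C_k = 1/e_k$, whereas the paper expands the full product $(K_Y\cdot H'_1\cdots H'_{n-1})$ of divisor classes, which sidesteps the multiplicity and transversality checks; both yield the identical bound.
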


\begin{thm}[Nefness criterion II]\label{nefness 3}
Let $X^{n}_{d_1,...d_c}\subset \mathbb{P}(b_1, \dots, b_{n+c+1})$ be an $n$-dimensional well-formed quasismooth general hypersurface of multi-degrees $(d_1,...d_c)$ with $\alpha=\sum_{j+1}^{c}d_j-\sum_{i=1}^{n+c+1}b_i>0$, where $b_1,\dots, b_{n+c+1}$ are not necessarily ordered by size. Denote by $x_1,\dots,x_{n+c+1}$ the homogenous coordinates of $\mathbb{P}(b_1, \dots, b_{n+c+1})$. There are two particular cases that we can handle:

\textbf{Case 1}
\begin{itemize}
\item Denote by $\Pi_t$ be the  corresponding linear subspaces $(x_1=x_2=\dots =x_{n-1}=x_{n+t-1}=0)$ in $\mathbb{P}(b_1, \dots, b_{n+c+1})$ for all $t\in \{1,2,...s\}$ respectively, where $s\leq c+2$. Suppose that for all $t\in \{1,2,...s\}$, $X\cap \Pi_t$ consist of finitely many points and take $Q_t\in X\cap \Pi_t$.

\item Assume that $X$ has $s$ cyclic quotient singularities at $Q_t$ of type $\frac{1}{r_t}(e_{1,t},\dots,e_{n-1,t}, e_{n,t})$ for all $t\in \{1,2,...s\}$ where $e_{i,t}$ are positive integers, $\gcd(e_{1,t}, \dots, e_{n,t})=1$, $\sum_{i=1}^ne_{i,t}<r_t$. 

\item Assume further that $x_1,\dots, x_{n-1}, x_{n+t-1}$ are the local coordinates of $Q_t$ corresponding to the weights $\frac{e_{1,t}}{r_t},\dots,\frac{e_{n,t}}{r_t}$ for all $t\in \{1,2,...s\}$. 
\end{itemize}

Take $\pi: Y\to X$ to be the weighted blow-up at those $Q_t$ with  corresponding weights $(e_{1,t},\dots,e_{n,t})$.
Suppose that the following conditions hold: 
\begin{enumerate}
\item $\alpha e_{j,t} \geq b_{j}(r_t-\sum_{i=1}^ne_{i,t})$ for each $j\in \{1, \dots, n-1\}$ and for each $t\in \{1,2,...s\}$;
\item $\frac{\alpha \prod_{k=1}^{c}d_k}{\prod_{j=n}^{n+c+1}b_j}\geq \sum_{t=1}^{s} \frac{r_t-\sum_{i=1}^ne_{i,t}}{r_te_{n,t}}$ ;
\item a general weighted complete intersection curve of multi-degrees $(d_1,...d_c)$ in $\mathbb{P}(b_{n}, ..., b_{n+c+1})$ is irreducible;
\item $\mathbb{P}(e_{1,t},\dots,e_{n,t})$  are well-formed for each $t\in \{1,2,...s\}$.
\end{enumerate}

\textbf{Case 2}
\begin{itemize}
\item Denote by $\Pi$ be the  corresponding linear subspace $(x_1=x_2=\dots =x_{n}=0)$ in $\mathbb{P}(b_1, \dots, b_{n+c+1})$. Suppose that $X\cap \Pi$ consists of finite points and take $Q_u \in X\cap \Pi$ for $u\in \{1,2,....N\}$.
\item Assume $X$ has a cyclic quotient singularity at each $Q_u$ of type $\frac{1}{r_u}(e_{1,u},\dots,e_{n-1,u}, e_{n,u})$ where $e_{i,u}$ are positive integers, $\gcd(e_{1,u}, \dots, e_{n,u})=1$, $\sum_{i=1}^ne_{i,u}<r_u$.
\item Assume further that $x_1,\dots, x_{n}$ are the local coordinates of $Q_u$ corresponding to their respective weights.
\end{itemize}

Take $\pi: Y\to X$ to be the weighted blow-up at those $Q_u$ with  corresponding weights $(e_{1,u},\dots,e_{n,u})$.
Suppose there exists a index $k\in \{1,\dots,n\}$ such that : 
\begin{enumerate}
\item $\alpha e_{j,u} \geq b_{j}(r_u-\sum_{i=1}^ne_{i,u})$ for each $j\in \{1, \dots, \hat{k},\dots, n\}$ and for each $u\in \{1,2,...,N\}$;
\item $\frac{\alpha \prod_{l=1}^{c}d_l}{b_k(\prod_{j=n+1}^{n+c+1}b_j)}\geq \sum_{u=1}^{s} \frac{r_u-\sum_{i=1}^ne_{i,u}}{r_ue_{n,u}}$;
\item a general weighted complete intersection curve of multi-degrees $(d_1,..d_c)$ in $\mathbb{P}(b_{k}, b_{n+1},... b_{n+c+1})$ is irreducible;
\item $\mathbb{P}(e_{1,u},\dots,e_{n,u})$ are well-formed.
\end{enumerate}

Then $K_Y$ is nef and $\nu(Y)\geq n-1$. 
\end{thm}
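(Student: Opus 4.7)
The plan is to verify $K_Y\cdot C \ge 0$ for every irreducible curve $C\subset Y$, adapting the structure of the proof of Theorem~\ref{nefness 1} to the present setting of multiple blow-up centers. Writing
$$K_Y \;=\; \pi^*K_X - \sum_t \frac{r_t-\sum_i e_{i,t}}{r_t}\,E_t \;=\; \alpha\,\pi^*A - \sum_t \frac{r_t-\sum_i e_{i,t}}{r_t}\,E_t,$$
where $A=\OX(1)$ is the ample generator so that $K_X\sim_{\bQ}\alpha A$, I split curves $C$ into those contained in some exceptional divisor $E_t\cong \bP(e_{1,t},\ldots,e_{n,t})$ (well-formed by assumption (4)) and strict transforms of irreducible curves $C'=\pi(C)\subset X$.

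For $C\subset E_t$, adjunction together with the standard weighted blow-up calculus gives an explicit expression for $K_Y|_{E_t}$ as a $\bQ$-linear combination of pulled-back coordinate hyperplanes and the ample generator of $\Pic(E_t)_{\bQ}$; nefness on $E_t$ then reduces to a scalar inequality which is precisely condition (1), paralleling the corresponding step in the proof of Theorem~\ref{nefness 1}. For $C$ the strict transform of $C'\subset X$, I compute
$$K_Y\cdot C \;=\; \alpha\,A\cdot C' - \sum_t \frac{r_t-\sum_i e_{i,t}}{r_t}\,(E_t\cdot C)$$
and test it against the one-parameter family of irreducible weighted complete intersection curves guaranteed by assumption (3) --- in $\bP(b_n,\ldots,b_{n+c+1})$ for Case 1 or in $\bP(b_k,b_{n+1},\ldots,b_{n+c+1})$ for Case 2. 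These test curves sweep out $X$, attain the extremal weighted multiplicity at each $Q_t$ or $Q_u$, and in particular provide an upper bound $E_t\cdot C \le (\text{test contribution})$ for an arbitrary $C$; summing the contributions over $t$ reduces nefness on the test family precisely to condition (2).

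The main technical obstacle is handling curves that simultaneously meet several exceptional divisors. In Case 2 all $N$ points lie in the same $\Pi$, so a single family of test curves inside $\bP(b_k,b_{n+1},\ldots,b_{n+c+1})$ must absorb the cumulative discrepancy from every $Q_u$ --- this is exactly why (2) is formulated as the sum $\sum_u (r_u-\sum_i e_{i,u})/(r_u e_{n,u})$ rather than $N$ separate inequalities; in Case 1 the planes $\Pi_t$ differ only in the coordinate $x_{n+t-1}$, so the various test families interact pairwise and again only the combined bound in (2) is sharp. Finally, $\nu(Y)\ge n-1$ follows by computing $K_Y^{n-1}\cdot \pi^*A$: the leading term $\alpha^{n-1}A^n>0$ survives the exceptional corrections since each $E_t|_{E_t}$ is supported on an $(n-1)$-dimensional cycle, so $K_Y$ cannot be numerically trivial in codimension one.
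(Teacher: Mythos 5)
Your proposal diverges from the paper's argument in a way that leaves two genuine gaps. First, you assign condition (1) the role of making $K_Y|_{E_t}$ nef, but that restriction is automatically \emph{ample}: by Proposition~\ref{wb}, $\mathcal{O}_Y(E_t)|_{E_t}\cong\mathcal{O}_{\mathbb{P}(e_{1,t},\dots,e_{n,t})}(-r_t)$ and $K_Y|_{E_t}=\frac{r_t-\sum_ie_{i,t}}{r_t}(-E_t)|_{E_t}$ with $\sum_ie_{i,t}<r_t$, so no hypothesis is consumed there. The actual role of condition (1) in the paper is entirely different: writing $H_j=(x_j=0)$ and $\pi^*H_j=H'_j+\sum_t\frac{e_{j,t}}{r_t}E_t$, condition (1) guarantees that the coefficients $\lambda_{j,t}$ in the decomposition $K_Y\sim_{\mathbb{Q}}\frac{\alpha}{b_j}H'_j+\sum_t\lambda_{j,t}E_t$ are nonnegative for every $j\in\{1,\dots,n-1\}$, which is what forces any curve $C$ with $K_Y\cdot C<0$ (necessarily not contained in $\cup_tE_t$) to lie in $\cap_{j=1}^{n-1}H'_j$. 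Your proposal never produces this confinement step, and without it the second half of your argument has nothing to stand on.

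Second, your reduction of nefness on strict transforms to condition (2) via a ``one-parameter family of test curves that sweeps out $X$ and provides an upper bound $E_t\cdot C\le(\text{test contribution})$ for arbitrary $C$'' does not work. Assumption (3) concerns a \emph{single} general weighted complete intersection curve in $\mathbb{P}(b_n,\dots,b_{n+c+1})$, which is exactly the one curve $\cap_{j=1}^{n-1}H_j\subset X$; it is not a covering family, and members of any genuinely sweeping family of complete-intersection curves in $X$ would miss the points $Q_t$ entirely, so they give no control on $E_t\cdot C$ for a curve through $Q_t$. Moreover there is no uniform bound on $E_t\cdot C$ over all curves $C$: curves with high multiplicity at $Q_t$ have arbitrarily large intersection with $E_t$. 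The paper avoids this entirely: after confining a hypothetical $K_Y$-negative curve to $\cap_jH'_j$, it uses (3) to identify $\mathrm{Supp}(\cap_{j=1}^{n-1}H'_j)$ as a single irreducible curve $C$, writes $(H'_1\cdots H'_{n-1})=tC$ with $t>0$, and computes $(K_Y\cdot H'_1\cdots H'_{n-1})=\frac{\alpha\prod_kd_k}{\prod_{j=n}^{n+c+1}b_j}-\sum_t\frac{r_t-\sum_ie_{i,t}}{r_te_{n,t}}\ge0$ by (2), a contradiction. You would need to replace your test-curve comparison with this confinement-plus-direct-computation scheme (and its Case 2 analogue with the index $k$) for the proof to close. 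Your closing remark on $\nu(Y)\ge n-1$ is salvageable, though the paper's route via $(K_Y^{n-1}\cdot E)>0$ using ampleness of $K_Y|_E$ is more direct.
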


By the same philosophy as in their paper, once we can establish the nefness criterion for the canonical divisor of the resulting variety $Y$ and check that $Y$ has only canonical singularities, we will have a corresponding minimal variety, which is likely to give us a new concrete example. In Section~\ref{sec 4}, we provide $79$ families of concrete minimal $3$-folds of general type in Table~\ref{tableA}, Table~\ref{tableAp}, Table~\ref{tableC}, and Table~\ref{tableC+}. All of these examples are different from those found by Iano-Fletcher as they have Picard numbers at least $2$. Moreover most of our examples have different deformation invariants from those of known ones. We also found several examples with the same deformation invariants as some known examples, with only the Picard numbers are different. Note that although our theorem is stated in a general form, for convenience we only consider some special cases for purpose of searching good examples. Here we mention several very interesting minimal $3$-folds found in our tables:
\begin{itemize}
\item[$\diamond$] The minimal models of the general weighted complete intersection of bi-degrees $(7,10)$ in $\bP(1,1,2,3,4,5)$ (see Table~\ref{tableA}, No.~10) has $p_g=2$ and $K^3=\frac{1}{3}$. This is a new example attaining minimal volumes among minimal $3$-folds of general type with $p_g\geq 2$, and they justify the sharpness of \cite[Theorem 1.4]{Chen07}. 
\item[$\diamond$] The minimal model of the general complete intersection of bi-degrees $(12,18)$ in 
$\bP(1,3,4,5,7,9)$ (see Table~\ref{tableA}, No.~3) is a minimal $3$-fold with $p_g=1$ and $K^3=\frac{1}{30}$; This example attains smallest known volume of minimal $3$-folds of general type with $p_g=1$.
\item[$\diamond$] The minimal model of the general hypersurface of degree $32$ in 
$\bP(1,1,5,8,12)$ (see Table~\ref{tableC}, No.~8) and the  minimal model of the general complete intersection of bi-degrees $(20,24)$ in 
$\bP(1,1,5,8,12,12)$ (see Table~\ref{tableC}, No.~9) are two smooth minimal $3$-folds with $p_g=7$ and $K^3=6$. Both examples lie on the first Noether line:
$K^3=\frac{4}{3}p_g-\frac{10}{3}$. These are new examples which are not birationally equivalent to those constructed by Kobayashi \cite{Kob} and by Chen and Hu \cite{C-H} and by Hu and Zhang\cite{HZ25}. 
\item[$\diamond$] The minimal model of the general hypersurface of degree $37$ in $\bP(1,1,6,9,14)$ (see Table~\ref{tableC}, No.~12) has $p_g=8$ and $K^3=\frac{15}{2}$, which lies precisely on the second Noether line. 
\end{itemize}

As in their previous paper, we also have some interesting application of Theorem~\ref{nefness 2} so that one may find infinite series of families of minimal $3$-folds of Kodaira dimension $2$ in Table ~\ref{tab kod 2}.

The structure of this article is as follows. In Section~\ref{sec 2}, we collect basic notions and preliminary results. In Section~\ref{sec 3}, we prove several different nefness criterions (i.e., Theorem~\ref{nefness 2} and Theorem~\ref{nefness 3}), which enables us to tell when a weighted blow-up of a well-formed quasi-smooth weighted complete intersection induces a minimal model with only canonical singularities. We also give details on the description of how to find these concrete examples. Section~\ref{sec 4} consists of presenting concrete examples of minimal $3$-folds with Kodaira dimension at least $2$. 

\section{Preliminaries}\label{sec 2}
Throughout we work over any algebraically closed field of characteristic $0$.

\subsection{Canonical volume }

Let $Z$ be a smooth projective variety. The {\it canonical volume} of $Z$ is defined as
$$\Vol(Z)=\lim_{m\to \infty}\frac{\dim(Z)!\ h^0(Z,mK_Z)}{m^{\dim(Z)}}.$$
For an arbitrary normal projective variety $X$, the {\it geometric genus} of $X$ is defined as $p_g(X)=p_g(Z)=h^0(Z, K_Z)$, and the {\it canonical volume} of $X$ is defined as 
$\Vol(X) = \Vol(Z),$
where $Z$ is a smooth birational model of $X$. It is known that both $p_g(X)$ and $\Vol(X)$ are independent of the choice of $Z$. Moreover, if $X$ has at worst canonical singularities and $K_X$ is nef, then $\Vol(X)=K^{\dim(X)}_X.$ 

A normal projective variety $X$ is {\it of general type} if $\Vol(X)>0$. A projective variety $X$ is {\it minimal} if $X$ is normal $\mathbb{Q}$-factorial with at worst terminal singularities and $K_X$ is nef.

\subsection{Kodaira dimension and numerical Kodaira dimension}\

\begin{defn}
Let $D$ be a $\mathbb{Q}$-Cartier $\mathbb{Q}$-divisor on a normal projective variety $X$. The {\it Kodaira dimension} of $D$ is defined to be
{\small $$
\kappa(X, D)=\begin{cases}
\max\{k\in \mathbb{Z}_{\geq 0}\mid \varlimsup\limits_{m\to \infty}m^{-k}{h^0(X,\lfloor mD\rfloor )}>0\}, & \text{if\ } |\lfloor mD\rfloor|\neq\emptyset\\
&\text{for a\ }m\in \mathbb{Z}_{> 0}; \\
-\infty, & \text{otherwise}.
\end{cases}$$}
\end{defn}

For a normal projective variety $X$ such that $K_X$ is $\mathbb{Q}$-Cartier, denote $\kappa(X)=\kappa(X, K_X)$. Note that if $X$ has at worst canonical singularities, then $\kappa(X)=\dim X$ if and only if $X$ is of general type.

\begin{defn}
Let $D$ be a nef $\mathbb{Q}$-Cartier $\mathbb{Q}$-divisor on a projective variety $X$. The {\it numerical Kodaira dimension} of $D$ is defined to be 
$$
\nu(X, D):=\max\{k\in \mathbb{Z}_{\geq 0}\mid D^k\not\equiv 0\}.
$$
\end{defn}

For a normal projective variety $X$ such that $K_X$ is $\mathbb{Q}$-Cartier and nef, denote $\nu(X)=\kappa(X, K_X)$. 
The famous abundance conjecture states that, if $X$ is a normal projective variety with mild singularities (e.g., canonical singularities) such that $K_X$ is $\mathbb{Q}$-Cartier and nef, then $\kappa(X)=\nu(X)$. In particular, this conjecture was proved if $\dim X=3$ and $X$ has canonical singularities (see \cite{K5, Mi4, K6} and references therein).

\subsection{Cyclic quotient singularities}\label{sec quot sing}\

Let $r$ be a positive integer. Denote by ${\bm \mu}_r$ the cyclic group of $r$-th roots of unity in $\bC$. A {\em cyclic quotient singularity} is of the form $\mathbb{A}^{n}/{\bm \mu}_r$, where the action of ${\bm \mu}_r$ is given by 
\[
{\bm \mu}_r\ni \xi: (x_1, \dots, x_n)\mapsto (\xi^{a_1}x_1, \dots,\xi^{a_n} x_n)
\]
for certain $a_1, \dots, a_n\in \mathbb{Z}/r$. Note that we may always assume that the action of ${\bm \mu}_r$ on $\mathbb{A}^{n}$ is small, that is, it contains no reflection (\cite[Definition 7.4.6, Theorem 7.4.8]{Ishii}), which is equivalent to that $\gcd(r, a_1,...,\hat{a}_i,...,a_n)=1$ for every $1\leq i\leq n$ by \cite[Remark 1]{Fujiki}. In this case, we say that $\mathbb{A}^{n}/{\bm \mu}_r$ is of {\em type $\frac{1}{r}(a_1,\dots, a_n)$}. We say that $Q\in X$ is a {\em cyclic quotient singularity of type $\frac{1}{r}(a_1,\dots, a_n)$}
if $(Q\in X)$ is locally analytically isomorphic to a neighborhood of $(0\in \mathbb{A}^{n}/{\bm \mu}_r)$. Recall that this singularity is isolated if and only if $\gcd(a_i, r)=1$ for every $1\leq i\leq n$ by \cite[Remark 1]{Fujiki}. 

The toric geometry interpretation of cyclic quotient singularities, by virtue of Reid \cite[(4.3)]{Rei87}, is as follows. Let $\overline{M}\simeq \mathbb{Z}^{n}$ be the lattice of monomials on $\mathbb{A}^{n}$, and $\overline{N}$ its dual. Define $N=\overline{N}+\mathbb{Z}\cdot \frac{1}{r}(a_1, \dots, a_n)$ and $M\subset \overline{M}$ the dual sub-lattice. Let $\sigma=\mathbb{R}_{\geq 0}^{n}\subset N_\mathbb{R}$ be the positive quadrant and $\sigma^\vee \subset M_\mathbb{R}$ the dual quadrant. Then in the language of toric geometry, 
\[
\mathbb{A}^{n}=\text{Spec}~\bC[\overline{M}\cap \sigma^\vee]
\]
and its quotient
\[
\mathbb{A}^{n}/{\bm \mu}_r=\text{Spec}~\bC[{M}\cap \sigma^\vee]=T_N(\Delta),
\]
where $\Delta$ is the fan corresponding to $\sigma$.

We refer to \cite{Rei87} for the definitions of terminal singularities and canonical singularities. An important fact is that terminal $3$-fold singularities are always isolated. Another important fact that we shall frequently use is a criterion on whether a cyclic quotient singularity is terminal or canonical.

\begin{lem}[{\cite[4.11]{Rei87}}]\label{can lem}
A cyclic quotient singularity of type $\frac{1}{r}(a_1,\dots, a_n)$ is terminal (resp. canonical) if and only if 
$$
\sum_{i=1}^n\big\{\frac{ka_i}{r}\big\}> 1 \, (\text{resp. } \geq 1)
$$
 for $k=1,\dots, r-1$. Here $\big\{\frac{ka_i}{r}\big\}=\frac{ka_i}{r}-\rounddown{\frac{ka_i}{r}}$ for each $i$ and $k$. 
\end{lem}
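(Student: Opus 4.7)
The plan is to translate the statement into toric geometry and compute discrepancies combinatorially. As recalled above, the cyclic quotient $\mathbb{A}^n/\bm{\mu}_r$ of type $\frac{1}{r}(a_1,\ldots,a_n)$ is the affine toric variety $U_\sigma$ associated to $\sigma = \mathbb{R}_{\geq 0}^n \subset N_{\mathbb{R}}$, where $N = \bZ^n + \bZ \cdot v_0$ and $v_0 = \frac{1}{r}(a_1,\ldots,a_n)$. The canonical divisor $K_{U_\sigma}$ is represented by the piecewise-linear support function $\psi$ with $\psi(e_i) = 1$ on every ray generator of $\sigma$, so $\psi(\sum_i c_i e_i) = \sum_i c_i$. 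Standard toric theory asserts that $U_\sigma$ is terminal (resp.\ canonical) if and only if $\psi(v) > 1$ (resp.\ $\geq 1$) for every primitive lattice vector $v \in N \cap \sigma$ not lying on a ray of $\sigma$, with the corresponding toric exceptional divisor $E_v$ contributing discrepancy $a(E_v) = \psi(v) - 1$.

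Next I would reduce the infinite quantifier to the finite collection of ``box'' representatives. The quotient $N/\bZ^n$ is cyclic of order $r$, generated by the image of $v_0$, and the unique representative of $k v_0 + \bZ^n$ inside the half-open box $\{\sum_i t_i e_i : 0 \leq t_i <1\}$ is $v_k := \sum_i \{ka_i/r\}\, e_i$, with $\psi(v_k) = \sum_i \{ka_i/r\}$. Under the small-action hypothesis, for each $k \in \{1,\ldots,r-1\}$ the point $v_k$ is non-zero and not on a ray of $\sigma$; otherwise $\{ka_i/r\}=0$ for all but one index, forcing $r \mid k a_j$ for every $j \neq i$ and contradicting $\gcd(r, a_1,\ldots,\hat a_i,\ldots,a_n)=1$. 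For the forward direction, given a primitive $v \in N \cap \sigma$ not on a ray, let $k \in \{0,\ldots,r-1\}$ be determined by $v \equiv k v_0 \pmod{\bZ^n}$ and set $m := v - v_k \in \bZ^n$; the componentwise inequalities $v_i \geq 0$ and $\{ka_i/r\} \in [0,1)$ force $m_i \geq 0$ for every $i$. When $k \geq 1$ this gives $\psi(v) = \psi(v_k) + \sum_i m_i \geq \psi(v_k)$; when $k=0$ the vector $v$ lies in $\bZ_{\geq 0}^n$, is primitive, and has at least two positive components, hence $\psi(v) \geq 2$.

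For the converse I would argue by contrapositive: if $\psi(v_k) \leq 1$ (resp.\ $<1$) for some $k \in \{1,\ldots,r-1\}$, take $u$ to be the primitive lattice generator of the ray $\mathbb{R}_{\geq 0}\cdot v_k$, so $v_k = j u$ for a positive integer $j$ and $\psi(u) = \psi(v_k)/j \leq \psi(v_k)$. Since $u$ is collinear with $v_k$ it also avoids the rays of $\sigma$, and the exceptional divisor $E_u$ extracted by the corresponding star subdivision has discrepancy $\psi(u)-1 \leq 0$ (resp.\ $<0$), ruling out terminality (resp.\ canonicity). The main obstacle I expect is the preliminary toric input asserting that the infimum of discrepancies over \emph{all} divisorial valuations over $U_\sigma$, not only the toric ones, is already computed by the formula $a(E_v) = \psi(v)-1$; this is standard but requires invoking the existence of toric log resolutions and a comparison of discrepancies across arbitrary birational models. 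Once that is in place, the combinatorial core above closes the proof.
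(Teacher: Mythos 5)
The paper gives no proof of this lemma---it is quoted directly from \cite[4.11]{Rei87}---and your toric argument is precisely the standard Reid--Tai proof given there: reduce to toric valuations via a toric resolution, compute discrepancies from the support function $\psi$, and restrict to the box representatives $v_k=\sum_i\{ka_i/r\}e_i$. The argument is correct; the only point you use tacitly is that each $e_i$ is primitive in the extended lattice $N$ (so that the normalization $\psi(e_i)=1$ computes discrepancies correctly), which follows from the small-action hypothesis by the same divisibility computation you already give for showing $v_k$ avoids the rays.
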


Also it is well-known that a $3$-dimensional cyclic quotient singularity is terminal if and only if it is of type 
$\frac{1}{r}(1,-1, a)$ with $\gcd(a,r)=1$ (\cite[5.2]{Rei87}).

\subsection{The Reid basket}\

A {\it basket} $B$ is a collection of pairs of integers (permitting
weights), say $\{(b_i,r_i)\mid i=1, \dots, s;\ \gcd(b_i, r_i)=1\}$. For simplicity, we will alternatively write a basket as a set of pairs with weights, 
 say for example,
$$B=\{(1,2), (1,2), (2,5)\}=\{2\times (1,2), (2,5)\}.$$

Let $X$ be a $3$-fold with at worst canonical singularities. According to 
Reid \cite{Rei87}, there is a basket of terminal cyclic quotient singularities (called the {\it Reid basket})
$$B_X=\bigg\{(b_i,r_i)\mid i=1,\dots, s;\ 0<b_i\leq \frac{r_i}{2};\ \gcd(b_i,r_i)=1\bigg\}$$
associated to $X$, where a pair $(b_i,r_i)$ corresponds to a terminal cyclic quotient singularity of type $\frac{1}{r_i}(1,-1,b_i)$. The way of determining the Reid basket of $X$ is to take a terminalization (i.e. a crepant $\mathbb{Q}$-factorial terminal model) $X'\to X$ and to locally deform every terminal singularity of $X'$ into a finite set of terminal cyclic quotient singularities.

In this article we only need to compute the baskets for minimal projective $3$-folds with terminal cyclic quotient singularities, in this case the Reid basket coincides with the set of singular points, where a singular point of type $\frac{1}{r}(1,-1,a)$ is simply denoted as $(a,r)$ under no circumstance of confusion. 

Reid's plurigenus formula is a powerful tool for computing the plurigenus for a canonical $3$-fold once we know precisely its corresponding baskets of singularites. We state the result for minimal projective terminal $3$-folds.

\begin{defn}
     For a singularity \( Q \) of type \(\frac{1}{r}(1, -1, b)\) define:

\[l(Q, n) = 
\begin{cases} 
0 & \text{if } n = 0, 1, \\
\sum_{k=1}^{n-1} \frac{\overline{bk}(r - \overline{bk})}{2r} & \text{if } n \geq 2,
\end{cases}\]

where \(\overline{x}\) denotes the smallest nonnegative residue of \(x\) modulo \(r\). This is extended to negative integers via:

\[l(-n) = -l(n + 1)\]

for all \(n \geq 0\). This is for consistency with Serre duality. For a collection (or basket) \(B\) of singularities define:

\[l(n) = \sum_{Q \in B} l(q, n)\]

for all \(n \in \mathbb{Z}\).
\end{defn}

\begin{lem}[{\cite[10.3]{Rei87}}]\label{pluri formula}
 For any projective 3-fold \( X \), with at worst terminal singularities, with  $B_X$ being its basket of singularities, then we have:

\[\chi(\mathcal{O}_X(nK_X)) = \frac{(2n-1)n(n-1)}{12} K_X^3 - (2n-1)\chi(\mathcal{O}_X) + l(n)\]

for all \( n \in \mathbb{Z} \).

 In particular, if $X$ is a minimal $3$-fold of general type, then $K_X$ is big and nef, we have:

 \[P_n = \chi(\mathcal{O}_X(nK_X)) = \frac{(2n-1)n(n-1)}{12}K_X^3 - (2n-1)\chi(\mathcal{O}_X) + l(n)\]

for all \( n \geq 2 \).
\end{lem}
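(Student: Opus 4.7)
The statement is the Riemann--Roch formula for a $3$-fold with terminal singularities, and the plan is to derive it by combining the smooth Hirzebruch--Riemann--Roch expansion with explicit local contributions at each quotient singularity in $B_X$. I would begin from the classical Hirzebruch--Riemann--Roch identity
\begin{equation*}
\chi(X,\mathcal{O}_X(nK_X)) = \int_X \operatorname{ch}(\mathcal{O}_X(nK_X))\cdot \operatorname{Td}(X),
\end{equation*}
which on a smooth projective $3$-fold expands, after using $c_1(\mathcal{O}_X(nK_X))=nK_X$ and $\operatorname{Td}_3=\tfrac{1}{24}c_1 c_2$, into the desired cubic polynomial in $n$ with leading coefficient $\tfrac{(2n-1)n(n-1)}{12}K_X^3$ and constant term $-(2n-1)\chi(\mathcal{O}_X)$; on a smooth model the correction $l(n)$ vanishes, and the computation is a direct Todd-class bookkeeping.

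To handle the singular case I would replace the integral over $X$ by an integral over a resolution $f:\widetilde X\to X$ and carefully compare. Because $X$ has only isolated terminal (hence cyclic quotient) singularities, pull-back makes the global polynomial part of the formula the same as in the smooth case once $K_X^3$ is interpreted as a $\mathbb{Q}$-self-intersection; the discrepancy lies entirely in a sum of local analytic contributions at each $Q\in B_X$. Concretely, I would exploit the index-$1$ cover $\pi_Q\colon (U_Q,\widetilde Q)\to (X,Q)$, which for a terminal $\frac{1}{r}(1,-1,b)$ singularity is smooth, write $\chi_{\text{loc}}(\mathcal{O}_X(nK_X))$ as the $\bm\mu_r$-invariant part of the equivariant Euler characteristic on $U_Q$, and extract the local polynomial in $n$.

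The decisive step, which I expect to be the main obstacle, is identifying this local invariant part with the combinatorial function $l(Q,n)=\sum_{k=1}^{n-1}\frac{\overline{bk}(r-\overline{bk})}{2r}$. Here I would use either equivariant/toric Riemann--Roch on the cover or a direct character computation: the $\bm\mu_r$-action on global sections of $nK$ around $\widetilde Q$ decomposes into weight spaces, and one needs to sum, over $k=1,\dots,n-1$, the number of monomials on which $\bm\mu_r$ acts trivially minus the expected contribution from the smooth model. Packaging this residue as $\frac{\overline{bk}(r-\overline{bk})}{2r}$ comes from counting lattice points under a triangle in the toric picture of $\frac{1}{r}(1,-1,b)$, and the sum over $k$ telescopes into $l(Q,n)$.

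Finally, I would verify the extension to negative $n$ via Serre duality: on a $3$-fold with canonical singularities, $\chi(\mathcal{O}_X(nK_X))=-\chi(\mathcal{O}_X((1-n)K_X))$, which exactly forces the convention $l(-n)=-l(n+1)$ and makes the cubic polynomial in $n$ self-dual up to sign, completing the formula for all $n\in \mathbb{Z}$. Summing the local contributions $l(Q,n)$ over the basket $B_X$ yields $l(n)$, and adding the global smooth part gives the stated identity; specialization to the case where $K_X$ is nef and big uses Kawamata--Viehweg vanishing to identify $\chi(\mathcal{O}_X(nK_X))$ with $P_n$ for $n\geq 2$.
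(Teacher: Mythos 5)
The paper does not prove this lemma at all: it is quoted verbatim from Reid's \emph{Young person's guide} \cite[10.3]{Rei87}, so there is no in-paper argument to compare against. Judged on its own, your outline follows the standard route of Reid's proof — smooth Hirzebruch--Riemann--Roch giving the polynomial part $\frac{(2n-1)n(n-1)}{12}K_X^3-(2n-1)\chi(\mathcal{O}_X)$ (your Todd-class bookkeeping and the identification of the constant term via $\chi(\mathcal{O}_X)=\frac{1}{24}c_1c_2$ check out), equivariant/holomorphic-Lefschetz computation of the local contribution at a $\frac1r(1,-1,b)$ point yielding $l(Q,n)$, Serre duality forcing $l(-n)=-l(n+1)$, and Kawamata--Viehweg vanishing for the passage from $\chi$ to $P_n$ when $n\geq 2$. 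All of that is the right skeleton, though the "decisive step" you flag (converting the character sum into $\sum_{k=1}^{n-1}\frac{\overline{bk}(r-\overline{bk})}{2r}$) is left as a sketch.

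There is one genuine gap: your parenthetical "isolated terminal (hence cyclic quotient) singularities" is false. Terminal $3$-fold singularities are isolated, but they are in general compound Du Val quotients, not cyclic quotient singularities; the basket $B_X$ is a collection of \emph{virtual} cyclic quotient points obtained by locally deforming each terminal singularity into a finite set of terminal cyclic quotients (as the paper itself states in its description of the Reid basket). To prove the lemma as stated for all terminal $3$-folds you therefore need two extra ingredients your proposal omits: the existence of such a local $\mathbb{Q}$-smoothing (which rests on the classification of terminal $3$-fold singularities) and the invariance of $\chi(\mathcal{O}_X(nK_X))$ under this deformation. Without them your argument only establishes the formula when $X$ has terminal \emph{cyclic quotient} singularities — which happens to be the only case this paper uses, but is strictly weaker than the statement.
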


\subsection{Weighted projective spaces and weighted complete intersections}\label{subsec wps}\

We refer to \cite{WPS, Fle00} for basic knowledge of weighted projective spaces and weighted complete intersections.
\begin{defn}[{\cite[5.11, 6.10]{Fle00}}]\label{wellform}
	\begin{enumerate}
		\item A weighted projective space $\mathbb{P}(a_0,...,a_n)$ is {\it well-formed} if $\gcd(a_0,...,\hat{a}_i,...,a_n)=1$ 
			for each $i$.
		\item A weighted complete intersection $X_{d_1,...d_c}$ in $\mathbb{P}(a_0,...,a_n)$ of multi-degrees $(d_1,...,d_c)$ is {\it well-formed} if $\mathbb{P}(a_0,...,a_n)$ is  well-formed and doesn't contain any codimension $c+1$ singular stratum of $\mathbb{P}(a_0,...,a_n)$.
	\end{enumerate}
\end{defn}

\begin{defn}[{\cite[3.1.5]{WPS}, \cite[6.1]{Fle00}}]\label{qsms}
	A weighted complete intersection $X\subset \mathbb{P}(a_0,...,a_n)$ is {\em quasi-smooth} if the corresponding affine cone of $X$ in $\mathbb{A}^{n+1}$ is smooth outside the origin point.
\end{defn}

We state a general result about a sufficient and necessary criteria about the quasi-smoothness of a general weighted complete intersection. Although we only need the result in the special case where $c\leq 3$, the following result gives a complete generalization of the previous partial results in \cite[Chapter 8]{Fle00}.

\begin{prop}[{cf. \cite[Proposition 3.1]{PST17}}]\label{2.8}
	Let \( X = X_{d_1, \ldots, d_c} \subset \mathbb{P}(a_0, \ldots, a_n) \) be a quasi-smooth weighted complete intersection which is not a linear cone (that is, $d_i\neq a_j$ for any $i,j$). For a subset \( I = \{i_1, \ldots, i_k\} \subset \{0, \ldots, n\} \) let \(\rho_I = \min\{c, k\}\), and for a \(k\)-tuple of natural numbers \(m = (m_1, \ldots, m_k)\) write \(m \cdot a_I := \sum_{j=1}^k m_j a_{i_j}\). Then, one of the following conditions holds.

(Q1) There exist distinct integers \(p_1, \ldots, p_{\rho_I} \in \{1, \ldots, c\}\) and \(k\)-tuples \(M_1, \ldots, M_{\rho_I} \in \mathbb{N}^k\) such that \(M_j \cdot a_I = d_{p_j}\) for \(j = 1, \ldots, \rho_I\).

(Q2) Up to a permutation of the degrees, there exist:

- an integer \(l < \rho_I\),

- integers \(e_{\mu,r} \in \{0, \ldots, n\} \setminus I\) for \(\mu = 1, \ldots, k-l\) and \(r = l+1, \ldots, c\),

- \(k\)-tuples \(M_1, \ldots, M_l\) such that \(M_j \cdot a_I = d_j\) for \(j = 1, \ldots, l\),

- for each \(r\), \(k\)-tuples \(M_{\mu,r}, \mu = 1, \ldots, k-l\) such that \(a_{e_{\mu,r}} + M_{\mu,r} \cdot a_I = d_r\),

satisfying the following property: for any subset \(J \subset \{l+1, \ldots, c\}\),

\[| \{e_{\mu,r} : r \in J, \mu = 1, \ldots, k-l\}| \geq k - l + |J| - 1.\]

Conversely, if for all subsets \(I \subset \{0, \ldots, n\}\) either (Q1) or (Q2) holds, then a general WCI \(X_{d_1, \ldots, d_c} \subset \mathbb{P}(a_0, \ldots, a_n)\) is quasi-smooth.
\end{prop}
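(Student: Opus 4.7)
The plan is to reduce quasi-smoothness to a stratified Jacobian analysis on the affine cone. By definition $X$ is quasi-smooth iff the Jacobian $J = (\partial f_p/\partial x_i)_{p,i}$ of the defining equations $f_1,\ldots,f_c$ has rank $c$ at every nonzero point of the affine cone $\widehat{X} \subset \mathbb{A}^{n+1}$. The torus $(\bC^*)^{n+1}$ acts on $\widehat{X}$ and preserves the coordinate stratification $\Sigma_I = \{x_j = 0 : j\notin I,\ x_i \neq 0 : i \in I\}$, so it suffices to test the Jacobian rank at a general point of $\widehat{X} \cap \Sigma_I$ as $I$ ranges over subsets of $\{0,\ldots,n\}$. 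The key observation is that $\partial f_p/\partial x_i$ evaluated on $\Sigma_I$ is nonzero only if $f_p$ contains a monomial $x_i\prod_{j\in I} x_j^{m_j}$ with $a_i + \sum_j m_j a_{i_j} = d_p$; when $i \in I$ this gives a monomial purely in $x_I$, and when $i \notin I$ it comes from a cross term $x_i\cdot x^M$ with $a_i + M\cdot a_I = d_p$.

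Condition (Q1) is exactly the statement that $\rho_I$ of the $c$ equations restrict to nontrivial polynomials on $\Sigma_I$ via pure $x_I$-monomials of the correct weighted degree. When $\rho_I = c \leq k$, a standard Bertini argument over the parameter space of coefficients shows that for generic data the intersection $\widehat{X}\cap \Sigma_I$ is smooth of codimension $c$ inside $\Sigma_I$, which is precisely the rank-$c$ condition with all contributions coming from $\partial/\partial x_i$ with $i \in I$. When $\rho_I = k < c$, the $k$ independent restricted equations on the $k$-dimensional stratum force $\widehat{X}\cap\Sigma_I$ to collapse to the origin (excluded), so there is nothing to test. Thus (Q1) handles the rank condition whenever it holds, and is moreover the \emph{only} way to avoid $\widehat{X}$ containing all of $\Sigma_I$ when $\rho_I = k$.

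If (Q1) fails, a subset of equations $f_{l+1},\ldots,f_c$ restricts to zero on $\Sigma_I$, and one must recover the missing rank through partials $\partial f_r/\partial x_{e_{\mu,r}}$ with $e_{\mu,r}\notin I$, each contributing on $\Sigma_I$ a monomial of degree $d_r - a_{e_{\mu,r}}$ in $x_I$. The Euler relation $\sum_i a_i x_i (\partial f_r/\partial x_i) = d_r f_r$, which vanishes on $\widehat{X}$, eliminates one row of $J$ in each such block; this accounts for the $-1$ in the Hall-type inequality
\[
\bigl|\{e_{\mu,r} : r \in J,\ \mu = 1,\ldots,k-l\}\bigr| \geq k - l + |J| - 1,
\]
which is both necessary and sufficient for a generic choice of coefficients to produce a full-rank $(c-l)\times(c-l)$ submatrix of $J$ in the normal directions indexed by the $e_{\mu,r}$. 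For the converse one reverses these steps: assuming (Q1) or (Q2) for every $I$, the witness monomials combine into a WCI whose Jacobian has full rank on each stratum, and openness of the quasi-smooth locus then promotes this to a general member. The principal obstacle is not any single computation but the simultaneous combinatorial bookkeeping in (Q2)---verifying that the Hall-type inequality precisely captures when a generic Jacobian, restricted to the normal bundle of $\Sigma_I$ and with one row absorbed by Euler, can attain maximal rank, coherently across all $I$ and all subsets $J\subset\{l+1,\ldots,c\}$.
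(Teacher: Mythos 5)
The paper does not prove this proposition: it is imported verbatim (with a ``cf.'') from Pizzato--Sano--Tasin \cite[Proposition 3.1]{PST17}, which in turn generalizes Iano-Fletcher's hypersurface criterion, so there is no in-paper proof to compare against. Your overall architecture --- stratify the punctured affine cone by the torus orbits $\Sigma_I$, test the rank of the Jacobian on each stratum, let (Q1) handle the rows whose restrictions to $\Pi_I$ are nonzero via Bertini on the torus, and reduce the remaining rows to a full-rank condition on the block $(\partial f_r/\partial x_e)_{r>l,\,e\notin I}$ governed by a Hall-type support condition --- is indeed the strategy of the actual proof. But the sketch has a concrete error at its central step and omits half the statement.

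The error: you attribute the $-1$ in the inequality $|\{e_{\mu,r}: r\in J\}|\geq k-l+|J|-1$ to the Euler relation ``eliminating one row of $J$ in each such block.'' This cannot work. For a row $r>l$ one has $f_r|_{\Pi_I}\equiv 0$, hence $\partial f_r/\partial x_i|_{\Pi_I}=0$ for all $i\in I$, and since $x_e=0$ on $\Sigma_I$ for $e\notin I$, the Euler identity $\sum_i a_i x_i\,\partial f_r/\partial x_i=d_r f_r$ restricts to $0=0$ on $\Sigma_I$ and eliminates nothing; in any case the Euler identities constrain the \emph{columns} of the Jacobian (the vector $(a_ix_i)_i$ lies in its kernel on the cone), not its rows. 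The correct source of the $-1$ is homogeneity: every entry of the block is a weighted-homogeneous form in the $x_I$, so each degeneracy locus $\{\operatorname{rank}B_J\leq |J|-1\}$ and the variety $V=\widehat{X}\cap\Sigma_I$ are invariant under the $\bC^*$-action with positive weights $a_I$; a nonempty invariant closed subset of $\Sigma_I$ has dimension at least $1$, so it suffices that the expected dimension $ (k-l)-(N_J-|J|+1)$ be $\leq 0$ rather than $<0$, which is exactly $N_J\geq k-l+|J|-1$. Without this replacement your dimension count would demand $N_J\geq k-l+|J|$, which is strictly stronger than the stated criterion and would, e.g., contradict the known hypersurface case ($c=1$, $l=0$, where $k$ distinct $e_\mu$ suffice). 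Beyond this, two further gaps: (i) you assert but do not prove that the degeneracy loci of a matrix whose entries are general members of prescribed monomial linear systems attain the expected codimension --- this is the technical lemma that carries the real weight in \cite{PST17}; and (ii) the forward implication (quasi-smooth $\Rightarrow$ (Q1) or (Q2) for every $I$) is only asserted via the phrase ``necessary and sufficient'' and is nowhere argued.
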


  We also state some basic but useful properties about well-formed and quasi-smooth weighted complete intersections, all of them can be found in \cite{ Fle00}.

  \begin{prop}
       Let \( X = X_{d_1, \ldots, d_c} \subset \mathbb{P} = \mathbb{P}(a_0, \ldots, a_n) \) be a well-formed quasi-smooth weighted complete intersection.

(i) \(\text{Sing}(X) = X \cap \text{Sing}(\mathbb{P})\).

(ii) If \(\dim X > 2\), then \(\text{Cl}(X) \cong \mathbb{Z}\) and is generated by \(\mathcal{O}_X(1) := \mathcal{O}_{\mathbb{P}}(1)|_X\).

(iii) Adjunction holds: in particular, if \(\dim X > 2\) the canonical sheaf of \(X\) is given by

\[K_X = \mathcal{O}_X\left(\sum_{i=1}^c d_i - \sum_{j=0}^n a_j\right).\]

The integer number \(\alpha = \sum_{i=1}^c d_i - \sum_{j=0}^n a_j\) is called the amplitude of \(X\). In particular, the self intersection number $(\mathcal{O}_X(K_X))^{\dim X} = \frac{\alpha^{\dim X}\prod_{i=1}^{c}d_i}{\prod_{j=0}^{n}a_j}$.

(iv) The space of global sections of \(\mathcal{O}_X(k)\) can be computed from the homogeneous coordinate ring of \(X\). More precisely, let \(A = \mathbb{C}[x_0, \ldots, x_n]/(f_1, \ldots, f_c)\) be the homogeneous coordinate ring of \(X\), \(A_k\) its \(k\)-graded part. Then,

\[H^0(X, \mathcal{O}_X(k)) \simeq A_k.\]

(v) When $\dim X>2$ and $X$ is not a linear cone, then $X$ is quasi-smooth will imply $X$ is also well-formed.
\end{prop}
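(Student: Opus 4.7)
The plan is to reduce each claim to a property of the affine cone $C_X\subset\mathbb{A}^{n+1}$ of $X$ and then descend through the natural $\mathbb{C}^*$-quotient. Quasi-smoothness means exactly that $C_X$ is smooth away from the origin, so every singularity of $X$ originates from the $\mathbb{C}^*$-stabilizers. Throughout I will appeal to standard facts for well-formed weighted projective spaces: $\mathrm{Sing}(\mathbb{P}(a_0,\ldots,a_n))$ is the union of strata along which some of the $a_i$ share a common factor; $\mathrm{Cl}(\mathbb{P})\cong\mathbb{Z}\cdot\mathcal{O}_{\mathbb{P}}(1)$; and Dolgachev's vanishing $H^i(\mathbb{P},\mathcal{O}_{\mathbb{P}}(k))=0$ for $0<i<n$ and every $k\in\mathbb{Z}$.

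Parts (i), (iii), and (iv) then follow routinely. For (i), a point of $X$ is singular precisely when the corresponding $\mathbb{C}^*$-orbit on $C_X\setminus\{0\}$ has nontrivial stabilizer, which is exactly the condition for lying in $\mathrm{Sing}(\mathbb{P})$. For (iii), adjunction on the cone presents $K_{C_X}$ as an equivariant sheaf of weight $\sum d_i-\sum a_j$, and descending through the quotient yields $K_X=\mathcal{O}_X(\alpha)$; the self-intersection formula then follows from $(\mathcal{O}_{\mathbb{P}}(1))^n=1/\prod a_j$ combined with the class $[X]=(\prod d_i)\cdot\mathcal{O}_{\mathbb{P}}(1)^c$. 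For (iv), I will apply the Koszul resolution
\[
0\to\mathcal{O}_{\mathbb{P}}\Bigl(-\sum\nolimits_i d_i\Bigr)\to\cdots\to\bigoplus_i\mathcal{O}_{\mathbb{P}}(-d_i)\to\mathcal{O}_{\mathbb{P}}\to\mathcal{O}_X\to 0,
\]
available because $(f_1,\ldots,f_c)$ is a regular sequence (quasi-smoothness implies $X$ is Cohen--Macaulay). Twisting by $\mathcal{O}_{\mathbb{P}}(k)$ and applying Dolgachev's vanishing collapses the associated hypercohomology spectral sequence and identifies $H^0(X,\mathcal{O}_X(k))$ with the graded piece $A_k$ of the coordinate ring.

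The genuine obstacles lie in (ii) and (v). For (ii), I will use (i) together with the well-formedness hypothesis (which forbids $X$ from meeting the codimension-$(c+1)$ singular strata of $\mathbb{P}$) to secure enough smoothness in high codimension for $X$, and then invoke a Grothendieck--Lefschetz-type theorem for quasi-smooth weighted complete intersections of dimension at least three to identify $\mathrm{Cl}(X)$ with the restriction $\mathrm{Cl}(\mathbb{P})|_X\cong\mathbb{Z}\cdot\mathcal{O}_X(1)$. For (v), I will argue by contradiction: if $X$ were quasi-smooth, not a linear cone, and yet not well-formed, then after first arranging that $\mathbb{P}$ itself is well-formed by rescaling weights, $X$ must contain some codimension-$(c+1)$ singular stratum $S$ of $\mathbb{P}$; lifting to the cone, each $f_i$ then vanishes identically on the coordinate subspace of $\mathbb{A}^{n+1}$ corresponding to $S$, and a careful bookkeeping of the Jacobian matrix along that subspace will force its rank to drop below $c$, contradicting quasi-smoothness. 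The assumption that $X$ is not a linear cone is precisely what rules out the exceptional case $d_i=a_j$, for which the Jacobian analysis would otherwise degenerate. This combinatorial Jacobian analysis is the main obstacle; the remaining parts reduce to routine cone-quotient manipulations.
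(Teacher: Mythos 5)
The paper itself offers no proof of this proposition: it is stated as a collection of standard facts with a pointer to Dolgachev \cite{WPS} and Iano-Fletcher \cite{Fle00}, so there is no in-paper argument to measure yours against. On its own terms, your cone-and-quotient outline for (iii) and (iv) is the standard one and would go through (for (iv), note that regularity of the sequence $(f_1,\dots,f_c)$ comes from the codimension count in the Cohen--Macaulay ring $\mathbb{C}[x_0,\dots,x_n]$, not from quasi-smoothness, which is the converse of what you wrote).

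The genuine gaps are in (i), (ii) and (v). For (i), ``nontrivial $\mathbb{C}^*$-stabilizer'' is not equivalent to ``singular point of the quotient'': a stabilizer acting on the tangent space of $C_X$ through quasi-reflections yields a \emph{smooth} quotient point, so a priori $X\cap \mathrm{Sing}(\mathbb{P})$ could contain smooth points of $X$. Well-formedness of $X$ (no codimension-$(c+1)$ singular stratum inside $X$) is exactly the hypothesis that rules this out, and your sketch never invokes it. For (ii), you appeal to ``a Grothendieck--Lefschetz-type theorem for quasi-smooth weighted complete intersections'' without stating or proving it; that theorem \emph{is} the content of (ii), so as written the claim is assumed rather than derived. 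For (v), you identify the right reduction (a codimension-$(c+1)$ singular stratum $S\subset X$ should contradict quasi-smoothness), but the ``careful bookkeeping of the Jacobian'' is precisely the combinatorial heart of Iano-Fletcher's Theorem 6.17 and you explicitly defer it. A telltale sign that the argument has not been located is that your sketch makes no use of the hypothesis $\dim X>2$, which is essential: there exist quasi-smooth surfaces in weighted projective space that are not linear cones and not well-formed, so any correct Jacobian analysis must break down in dimension $2$. In short, the proposal is a plausible roadmap that matches the literature the paper cites, but the only two nontrivial assertions, (ii) and (v), are left unproved.
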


\subsection{Singularities on weighted complete intersections}

Singularities of a well-formed quasi-smooth weighted complete intersection can be determined by looking at its defining equations. We refer to \cite[Sections 9-10]{Fle00} for the general method. Here we illustrate the result on singularities of $3$-dimensional weighted complete intersections with codimension at most $2$.

\begin{prop}[The Hypersurface case, {cf. \cite[Sections 9-10]{Fle00}}]\label{non iso can}
Let $X_d$ be a general well-formed quasi-smooth $3$-dimensional hypersurface in $\mathbb{P}(a_0,...,a_4)$ of degree $d$. Suppose that $\gcd(a_i, a_j, a_k)=1$ for any distinct $0\leq i,j,k\leq 4$. Then the singularities of $X_d$ only arise along the edges and vertices of $\mathbb{P}(a_0,...,a_4)$. Denote $P_0, \dots, P_4$ to be the vertices. Then the set of singularities of $X_d$ is determined as follows:
\begin{enumerate}
 \item For a vertex $P_i$,
 \begin{enumerate}[label=(1.\roman*)]
 \item if $a_i | d$, then $P_i\not \in X_d$;
 \item if $a_i\nmid d$, then there exists another index $j$ such that $a_i | d-a_j$, and $P_i\in X_d$ is a cyclic quotient singularity of type $\frac{1}{a_i}(a_k, a_l, a_m)$.
 \end{enumerate}
 \item For an edge $P_iP_{j}$ (that is, $\overline{P_iP_{j}}\setminus \{P_i, P_j\}$, where $\overline{P_iP_{j}}$ is the line passing through $P_i$ and $P_j$), denote $e=\gcd(a_i, a_j)$,
 \begin{enumerate}[label=(2.\roman*)]
 \item if $e | d$, then $P_iP_{j}\cap X$ consists of exactly $\lfloor \frac{ed}{a_ia_j}\rfloor$ points, each point is a cyclic quotient singularity of type $\frac{1}{e}(a_k, a_l, a_m)$;
\item if $e\nmid d$, then $P_iP_{j}\subset X$, and there exists another index $k$ such that $e | d-a_k$, in this case, $P_iP_{j}$ is analytically isomorphic to $\mathbb{C}^*\times \frac{1}{e}(a_l, a_m)$, and each point on $P_iP_{j}$ is a cyclic quotient singularity of type $\frac{1}{e}(0, a_l, a_m)$. 
 \end{enumerate}
\end{enumerate}
Here $\{i,j,k,l,m\}$ is a reordering of $\{0,1,2,3,4\}$.
\end{prop}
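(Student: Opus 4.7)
The plan is to reduce to local analysis of the ambient space $\mathbb{P}=\mathbb{P}(a_0,\ldots,a_4)$ stratum by stratum. The starting observation is the identity $\mathrm{Sing}(X)=X\cap\mathrm{Sing}(\mathbb{P})$, valid because $X$ is a well-formed quasi-smooth weighted complete intersection. Under the hypothesis $\gcd(a_i,a_j,a_k)=1$ for all distinct triples, every $2$-dimensional open torus orbit of $\mathbb{P}$ is smooth, so $\mathrm{Sing}(\mathbb{P})$ is contained in the one-skeleton: the five vertices and ten edges. This reduces the proof to the cases (1) and (2) of the statement.

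For a vertex $P_i$, I would work in the affine cone and examine the lift $(0,\ldots,0,1,0,\ldots,0)$ (with $1$ in position $i$). Quasi-smoothness at this point forces either $f$ itself to be nonzero there, equivalent to the monomial $x_i^{d/a_i}$ appearing in $f$, which is only possible if $a_i\mid d$, in which case $P_i\notin X$; or some partial derivative $\partial f/\partial x_s$ to be nonzero there, which requires a monomial $x_i^{p}x_s$ in $f$ with $pa_i+a_s=d$, giving $a_i\mid d-a_s$ and placing $P_i$ on $X$. In the latter case I solve $x_s$ via the implicit function theorem on the smooth affine cone and quotient the remaining coordinates $(x_k,x_l,x_m)$ by the isotropy $\mu_{a_i}$ acting with weights $(a_k,a_l,a_m)$, producing the claimed type $\frac{1}{a_i}(a_k,a_l,a_m)$.

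For an edge $P_iP_j$ set $e=\gcd(a_i,a_j)$; the local ambient model along the open edge is $\mathbb{C}^{\ast}\times(\mathbb{C}^{3}/\mu_e)$, where $\mathbb{C}^{\ast}$ parametrises motion along the edge (the $\mu_e$-action on $x_i,x_j$ is trivial since $e$ divides both weights) and $\mu_e$ acts on the transverse $\mathbb{C}^3$ with weights $(a_k,a_l,a_m)$. Restricting $f$ to $\{x_k=x_l=x_m=0\}$ yields a sum of monomials $x_i^{p}x_j^{q}$ with $pa_i+qa_j=d$, and such monomials exist precisely when $e\mid d$. If $e\mid d$, a general $f$ restricts to a nonzero section of $\mathcal{O}_{\mathbb{P}(a_i,a_j)}(d)$, so the intersection with the open edge is a finite reduced set; at each such point the equation uses the $\mathbb{C}^{\ast}$ direction, leaving a local model $\mathbb{C}^{3}/\mu_e$ with weights $(a_k,a_l,a_m)$, i.e.\ the claimed type $\frac{1}{e}(a_k,a_l,a_m)$. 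If $e\nmid d$, the edge is contained in $X$, and quasi-smoothness at the generic point of the edge forces $\partial f/\partial x_s\neq 0$ for some $s\in\{k,l,m\}$, equivalent to a monomial $x_i^{p}x_j^{q}x_s$ in $f$; this gives $e\mid d-a_s$. Solving $f=0$ for $x_s$ on a transverse slice reduces the transverse structure to $\mathbb{C}^{2}/\mu_e$ with weights $(a_l,a_m)$, which together with the $\mathbb{C}^{\ast}$ along the edge yields the type $\frac{1}{e}(0,a_l,a_m)$.

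The main technical obstacle is the combinatorial bookkeeping in case (2.i), specifically verifying the precise count $\lfloor ed/(a_ia_j)\rfloor$ of intersection points on the open edge. This amounts to (a) computing the total degree of $\mathcal{O}(d)|_{\mathbb{P}(a_i,a_j)}$ on the underlying $\mathbb{P}^{1}$, which equals $ed/(a_ia_j)$, (b) subtracting the contributions at the two vertices $P_i,P_j$, which depend on whether $a_i\mid d$ and $a_j\mid d$, and (c) checking that for a general $f$ these zeros on the open edge are all reduced, so that multiplicities do not spoil the count. The remaining steps are essentially mechanical once one writes down the explicit toric charts on the weighted projective space and applies the implicit function theorem to $f$.
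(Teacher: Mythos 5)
The paper does not actually prove this proposition; it is quoted from Iano-Fletcher \cite{Fle00} with a ``cf.''\ citation, so there is no in-paper argument to compare against. Your outline is the standard route and is sound: $\mathrm{Sing}(X)=X\cap\mathrm{Sing}(\mathbb{P})$ for a well-formed quasi-smooth hypersurface, the triple-gcd hypothesis confines $\mathrm{Sing}(\mathbb{P})$ to the one-skeleton, and the vertex/edge cases follow from quasi-smoothness of the affine cone plus the implicit function theorem applied $\mu_{a_i}$- (resp.\ $\mu_e$-) equivariantly. Your dichotomies at the vertices ($a_i\mid d$ versus a monomial $x_i^p x_s$) and along the edges ($e\mid d$ versus a monomial $x_i^p x_j^q x_s$) are exactly the right quasi-smoothness conditions, and the resulting local quotient types are correctly identified.

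The one point I would press you on is the count in (2.i), which you defer as ``mechanical'' but which is the only step with real content. Writing $a_i=e\alpha$, $a_j=e\beta$, $d=e\delta$ with $\gcd(\alpha,\beta)=1$, the restriction $f|_{\overline{P_iP_j}}$ is $x_i^{p_0}x_j^{q_0}\sum_{t=0}^{N}c_t u^t$ where $u=x_i^{\beta}x_j^{-\alpha}$ is the coordinate on the open edge, so for general $f$ the number of points on the open edge is $N$, i.e.\ one less than the number of degree-$d$ monomials in $x_i,x_j$. This equals $\lfloor \delta/(\alpha\beta)\rfloor=\lfloor ed/(a_ia_j)\rfloor$ only when the arithmetic progression of exponents attains its full range (for instance when $a_i\mid d$ and $a_j\mid d$, as happens in all the paper's applications); in borderline cases such as $(a_i,a_j,d)=(2,3,7)$ the single monomial $x_i^2x_j$ gives no point on the open edge while $\lfloor ed/(a_ia_j)\rfloor=1$. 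So your step (b) of subtracting vertex contributions from the total degree $ed/(a_ia_j)$ will produce the correct answer, but you should either verify that it reproduces $\lfloor ed/(a_ia_j)\rfloor$ under the hypotheses actually in force, or record the count as (number of edge monomials)$\,-\,1$; as literally stated the formula needs this caveat.
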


\begin{prop}[The Codimension 2 case, {cf. \cite[Sections 9-10]{Fle00}}]\label{non iso can 2}
Let $X_{d_1,d_2}$ be a general well-formed quasi-smooth $3$-dimensional hypersurface in $\mathbb{P}(a_0,...,a_5)$ of bi-degrees $(d_1,d_2)$. Suppose that $\gcd(a_i, a_j, a_k, a_l)=1$ for any four distinct indices $0\leq i,j,k,l\leq 5$. Then the singularities of $X_{d_1,d_2}$ only arise along the stratums of $\mathbb{P}(a_0,...,a_5)$ of dimension at most 2. Denote $P_0, \dots, P_5$ to be the vertices. Then the set of singularities of $X_{d_1,d_2}$ is determined as follows:
\begin{enumerate}
 \item For a vertex $P_i$,
 \begin{enumerate}[label=(1.\roman*)]
 \item if $a_i | d_1$ or $a_i|d_2$, then $P_i\not \in X_{d_1,d_2}$;
 \item if $a_i\nmid d_1$ and $a_i\nmid d_2$, then there exists two other indies $j,s$ such that $a_i | d_1-a_j$ and $a_i | d_2-a_s$ and $P_i\in X_{d_1,d_2}$ is a cyclic quotient singularity of type $\frac{1}{a_i}(a_k, a_l, a_m)$.
 \end{enumerate}
 \item For an edge $P_iP_{j}$ (that is, $\overline{P_iP_{j}}\setminus \{P_i, P_j\}$, where $\overline{P_iP_{j}}$ is the line passing through $P_i$ and $P_j$), denote $e=\gcd(a_i, a_j)$,
 \begin{enumerate}[label=(2.\roman*)]
 \item if $e | d_1$ and $e | d_2$, then $P_iP_{j}\cap X = \emptyset$;
 \item if $e | d_1$ but $e\nmid d_2$, then there exists another index $s$ such that there exists monomial of the form $x_i^{n_i}x_j^{n_j}x_s$ of degree $d_2$, and $P_iP_{j}\cap X$ consists of exactly $\lfloor \frac{ed_1}{a_ia_j}\rfloor$ points, each point is a cyclic quotient singularity of type $\frac{1}{e}(a_k, a_l, a_m)$;
 \item if $e | d_2$ but $e\nmid d_1$, then there exists another index $s$ such that there exists monomial of the form $x_i^{n_i}x_j^{n_j}x_s$ of degree $d_1$, and $P_iP_{j}\cap X$ consists of exactly $\lfloor \frac{ed_2}{a_ia_j}\rfloor$ points, each point is a cyclic quotient singularity of type $\frac{1}{e}(a_k, a_l, a_m)$;
\item if $e\nmid d_1$ and $e\nmid d_2$, then $P_iP_{j}\subset X$, and there exists two other indices $k,s$ such that $e | d_1-a_k$ and $e | d_2-a_s$, in this case, $P_iP_{j}$ is analytically isomorphic to $\mathbb{C}^*\times \frac{1}{e}(a_l, a_m)$, and each point on $P_iP_{j}$ is a cyclic quotient singularity of type $\frac{1}{e}(0, a_l, a_m)$. 
 \end{enumerate}
 \item For a face $P_iP_jP_k$, denote $e= \gcd(a_i,a_j,a_k)$, we know from well-formedness that $e| d_1$ or $e| d_2$.
 \begin{enumerate}[label=(3.\roman*)]
  \item if $e| d_1$ and $e| d_2$, then $P_iP_jP_k\cap X$ consists of finitely many points, each point is a cyclic quotient singularity of type $\frac{1}{e}(a_l, a_m, a_s)$. The number of such points $N$ can be computed as follows: $N = \frac{d_1d_2e}{a_ia_ja_k}-\sum_{i,j,k}\frac{n_te}{a_t} -\sum_{p<q}\frac{n_{p,q}e}{gcd(a_p,a_q)}$, where $n_t \in \{0,1\}$ with $n_t = 1$ if and only if $P_t \in X$, $n_{p,q}$ denotes the number of points in $P_{p}P_{q}\cap X$, $t,p,q$ are indices in $\{i,j,k\}$.
  \item if $e| d_1$ but $e\nmid d_2$, then $P_iP_jP_k\cap X$ is an one dimensional subset, and there exists another index $s$ such that $e|d_2-a_s$, in this case the singularity points are not isolated and each point is a cyclic quotient singularity of the type $\frac{1}{e}(0, a_l, a_m)$.
  \item if $e| d_2$ but $e\nmid d_1$, then $P_iP_jP_k\cap X$ is an one dimensional subset, and there exists another index $s$ such that $e|d_1-a_s$, in this case the singularity points are not isolated and each point is a cyclic quotient singularity of the type $\frac{1}{e}(0, a_l, a_m)$.
\end{enumerate}
\end{enumerate}
Here $\{i,j,k,l,m,s\}$ is a reordering of $\{0,1,2,3,4,5\}$.
\end{prop}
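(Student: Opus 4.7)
The plan is to extend Iano-Fletcher's stratum-by-stratum analysis from Sections 9--10 of \cite{Fle00}, which treats the hypersurface case of Proposition~\ref{non iso can}, to codimension two. The starting point is the foundational fact (stated in the unlabelled proposition above) that $\mathrm{Sing}(X)=X\cap \mathrm{Sing}(\mathbb{P}(a_0,\ldots,a_5))$ whenever $X$ is well-formed and quasi-smooth. The singular locus of the ambient weighted projective space is the union of the toric strata $\Pi_I=\{x_j=0:j\notin I\}^\circ$ for which $\gcd(a_i:i\in I)>1$; under the hypothesis that $\gcd(a_i,a_j,a_k,a_l)=1$ for any four distinct indices, only strata of dimension at most two (vertices, open edges, open triangular faces) can be singular, so the analysis reduces to studying these three families.

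Two technical inputs drive the proof. First, the quasi-smoothness criterion (Proposition~\ref{2.8}) translates, for a general pair $(f_1,f_2)$, into existence statements for particular kinds of monomials in $f_1$ and $f_2$. Applied to a subset $I$ indexing a stratum, either (Q1) forces a monomial in the variables $x_i$ ($i\in I$) alone to have some degree $d_\ell$ (which causes $f_\ell$ to cut $\Pi_I$ out of $X$), or (Q2) forces monomials of the form $(\prod_{i\in I}x_i^{n_i})x_s$ to exist for suitable transverse variables $x_s$ (which determines the normal directions used to trim local coordinates at $X\cap \Pi_I$). Second, at a point $Q$ of $\Pi_I$ the stabilizer in $\mathbb{G}_m$ is $\bm\mu_e$ with $e=\gcd(a_i:i\in I)$, acting on the transverse coordinates $x_j$ ($j\notin I$) with weights $a_j\bmod e$; the local analytic type of $X$ at $Q$ is then read off by taking the quotient of the remaining transverse variables (those not cut by the $f_\ell$'s) under this action.

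The case analysis then proceeds mechanically. For a vertex $P_i$, (Q1) applied to $I=\{i\}$ yields case (1.i) when $a_i\mid d_\ell$ for some $\ell$; otherwise, (Q2) produces indices $j,s$ with $a_i\mid d_1-a_j$ and $a_i\mid d_2-a_s$, and the two monomials $x_i^{n_i}x_j$, $x_i^{n'_i}x_s$ cut two transverse directions, leaving the singularity type $\frac{1}{a_i}(a_k,a_l,a_m)$ of case (1.ii). For an open edge $P_iP_j$ with $e=\gcd(a_i,a_j)$, one splits on whether $e\mid d_\ell$ (equivalently, on whether $f_\ell|_{\mathrm{edge}}$ is a nonzero polynomial in $x_i,x_j$); a Bezout count on the well-formed reduction $\mathbb{P}(a_i/e,a_j/e)$ gives $\lfloor ed_\ell/(a_ia_j)\rfloor$ zeros in the open edge, and (Q2) provides the transverse monomial pinning down the singularity type. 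Case (2.iv), where neither $f_\ell$ cuts the edge, yields the non-isolated family $\mathbb{C}^*\times\frac{1}{e}(a_l,a_m)$. For an open face $P_iP_jP_k$ with $e=\gcd(a_i,a_j,a_k)$, well-formedness (Definition~\ref{wellform}) forbids the simultaneous failure of $e\mid d_1$ and $e\mid d_2$, since that would force the entire singular face, a codimension $c+1=3$ stratum of $\mathbb{P}$, into $X$; the three remaining subcases (3.i)--(3.iii) are treated in the same style.

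The main technical obstacle is the point-count formula in (3.i). Passing to the well-formed reduction of the face, identified with $\mathbb{P}(a_i/e,a_j/e,a_k/e)$, the restrictions of $f_1,f_2$ become polynomials of degrees $d_1/e,d_2/e$, and Bezout yields
\[
\frac{(d_1/e)(d_2/e)}{(a_i/e)(a_j/e)(a_k/e)}=\frac{d_1d_2e}{a_ia_ja_k}
\]
intersection points counted with multiplicity. Each smooth open-face point contributes $1$, each vertex $P_t$ (a $\frac{1}{a_t/e}$ quotient singularity of the reduced face) contributes $e/a_t$, and each edge intersection point contributes $e/\gcd(a_p,a_q)$; solving for the number $N$ of open-face points recovers the stated formula. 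A final bookkeeping step is to verify that every monomial whose existence is forced by (Q1)--(Q2) actually appears in a \emph{generic} pair $(f_1,f_2)$, which is immediate from the irreducibility of the parameter space of such pairs.
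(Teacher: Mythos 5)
Your proposal is correct and follows exactly the stratum-by-stratum method of Iano-Fletcher that the paper itself cites (it gives no independent proof, only the reference to \cite[Sections 9--10]{Fle00}): reduce to $\mathrm{Sing}(X)=X\cap\mathrm{Sing}(\mathbb{P})$, use (Q1)/(Q2) of Proposition~\ref{2.8} to detect which transverse directions are cut by $f_1,f_2$ on each stratum, and count points via Bezout on the well-formed reductions. In particular your multiplicity bookkeeping for the formula in (3.i) (contributions $1$, $e/a_t$, $e/\gcd(a_p,a_q)$ against the total $d_1d_2e/(a_ia_ja_k)$) and your use of well-formedness to exclude $e\nmid d_1$ and $e\nmid d_2$ simultaneously on a face are exactly the intended arguments.
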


  Of course, one may give a complete examination of the singularities of weighted complete intersections with higher codimensions. However, such a general statement would be tedious and not very useful in practice. For our interest, since the resulting minimal variety has only terminal 3-fold singularities, we want the corresponding weighted complete intersection to have only isolated or one dimensional singularities. In this case, we will have stronger restrictions on the degrees and the weights, which enables us to determine the corresponding singularities more easily.

\subsection{Weighted blow-ups of cyclic quotient singularities}

Weighted blow-ups of cyclic quotient singularities play important roles in our construction of new examples. As cyclic quotient singularities are toric, certain blow-ups can be constructed and computed easily using toric geometry. We recall the following proposition from \cite{And18}.
\begin{prop}[{cf. \cite{And18}}]\label{wb}
	Let $X$ be a normal variety of dimension $n$ such that $K_X$ is $\mathbb{Q}$-Cartier.
Suppose that $X$ has a cyclic quotient singularity $Q$ of type $\frac{1}{r}(a_1,a_2,\dots, a_n)$, where $a_1>0$, $a_2>0$,$\cdots$, $a_n>0$, $\gcd(a_1,a_2,\dots, a_n)=1$. Then we can take a weighted blow-up $\pi:Y\to X$, at $Q$ with weight $(a_1,a_2,\dots, a_n)$, which has the following properties:
\begin{enumerate}
 \item The exceptional divisor $\pi^{-1}(Q)=E\cong \mathbb{P}(a_1,a_2,\dots, a_n)$.
 \item $\OO_Y(E)|_E\cong \OO_{\mathbb{P}(a_1,a_2,\dots, a_n)}(-r)$.
 \item Locally over $Q$, $Y$ is covered by $n$ affine pieces of cyclic quotient singularities of types $\frac{1}{a_i}(-a_1,\dots, r,\dots, -a_n)$, which is obtained by replacing the $i$-th term of $(-a_1,\dots, -a_n)$ with $r$ for each $i$. 
 \item $K_Y=\pi^*(K_X)-\frac{r-\sum_{i=1}^n a_i}{r}E$.
\end{enumerate}
In particular, if $X$ is projective and $\mathbb{P}(a_1,a_2,\dots, a_n)$ is well-formed, then
$$
K_Y^n=\Big(\pi^*(K_X)-\frac{r-\sum_{i=1}^n a_i}{r}E\Big)^n=K_X^n-\frac{(r-\sum_{i=1}^n a_i)^n}{r\prod_{i=1}^na_i}.
$$
\end{prop}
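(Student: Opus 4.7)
The plan is to carry out everything in a local toric model. Since the conclusion is analytic-local at $Q$, by Section~\ref{sec quot sing} we may replace $X$ by a neighborhood of $Q$ identified with the affine toric variety $U_\sigma = \mathrm{Spec}\,\mathbb{C}[M\cap \sigma^\vee]$, where $\sigma = \mathbb{R}_{\geq 0}^n \subset N_\mathbb{R}$ and $N = \mathbb{Z}^n + \mathbb{Z}\cdot v$ with $v = \frac{1}{r}(a_1,\ldots,a_n)$. The weighted blow-up $\pi: Y\to X$ is then the toric birational morphism associated to the star subdivision of $\sigma$ along the ray $\rho_v = \mathbb{R}_{\geq 0}v$; the new fan has exactly the $n$ simplicial maximal cones $\sigma_i = \mathbb{R}_{\geq 0}\langle e_1,\ldots, \widehat{e_i}, \ldots, e_n, v\rangle$ for $i=1,\dots,n$.

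For (1), the exceptional divisor $E$ is the torus-invariant prime divisor attached to $\rho_v$, and the star fan at $\rho_v$ is the fan in $N/\mathbb{Z}v$ spanned by the images of $e_1,\ldots,e_n$. The relation $rv = \sum_i a_i e_i$ descends to $\sum_i a_i \overline{e_i} = 0$, and the hypothesis $\gcd(a_1,\ldots,a_n)=1$ ensures this is the only relation up to scaling, so the fan is precisely that of $\mathbb{P}(a_1,\ldots,a_n)$. For (3), on each chart $U_{\sigma_i}$ the primitive ray generators are $\{e_j\}_{j\neq i}\cup\{v\}$; a Smith-normal-form computation shows they generate a sublattice of index $a_i$ in $N$, so $U_{\sigma_i}$ is a cyclic quotient singularity of order $a_i$, and reading off the residues of the remaining lattice points against a generator of the quotient yields the weight vector $(-a_1,\ldots, r,\ldots, -a_n)$.

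Item (4) follows from the toric discrepancy formula: for a star subdivision at a lattice point $v$ lying in a cone $\sigma$, the discrepancy of the new torus-invariant divisor equals the sum of the barycentric coordinates of $v$ in the primitive generators of $\sigma$, minus $1$. Since $v = \frac{1}{r}(a_1,\ldots,a_n)$ lies in $\sigma = \mathbb{R}_{\geq 0}\langle e_1,\ldots,e_n\rangle$, this gives discrepancy $\frac{\sum a_i}{r}-1$, i.e.\ $K_Y = \pi^*K_X - \frac{r-\sum a_i}{r}E$. For (2) I would use adjunction $K_E = (K_Y+E)|_E$. Since $\pi(E) = Q$ is a point, $(\pi^*K_X)|_E = 0$ and hence $K_Y|_E = -\frac{r-\sum a_i}{r}E|_E$; combining, $K_E = \frac{\sum a_i}{r}\,E|_E$. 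When $\mathbb{P}(a_1,\ldots,a_n)$ is well-formed, $K_E \cong \mathcal{O}(-\sum a_i)$, and solving this relation forces $E|_E \cong \mathcal{O}(-r)$.

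Finally, the intersection-number formula drops out: expand $K_Y^n = (\pi^*K_X - \frac{r-\sum a_i}{r}E)^n$. The only surviving terms are $(\pi^*K_X)^n = K_X^n$, by the projection formula, and $(-\frac{r-\sum a_i}{r})^n E^n$, because $(\pi^*K_X)|_E = 0$ kills every mixed product. Using $E^n = (E|_E)^{n-1} = \frac{(-r)^{n-1}}{\prod a_i}$ on $\mathbb{P}(a_1,\ldots,a_n)$, the surviving signs and factors of $r$ combine to $-\frac{(r-\sum a_i)^n}{r\prod a_i}$, matching the stated formula. The main technical obstacle is item (3), where one must carefully track the signs when re-expressing each $e_j$ ($j\neq i$) in the basis $\{e_k\}_{k\neq i}\cup\{v\}$ of $N\otimes\mathbb{Q}$; once this is in place, everything else is essentially a direct corollary of the local toric picture.
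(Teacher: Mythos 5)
The paper does not actually prove this proposition; it is quoted verbatim from Andreatta's paper \cite{And18} as a recalled fact, so there is no internal argument to compare against. Your toric proof is the standard one and is essentially correct: the identification of the weighted blow-up with the star subdivision at $v=\frac{1}{r}(a_1,\dots,a_n)$, the description of $E$ via the quotient lattice $N/\mathbb{Z}v\cong \mathbb{Z}^n/\mathbb{Z}(a_1,\dots,a_n)$, the lattice-index computation giving the $n$ charts of order $a_i$, the discrepancy formula for a star subdivision, and the vanishing of all mixed terms in the expansion of $K_Y^n$ by the projection formula are all right. Two points deserve a little more care. First, you should check that $v$ is a primitive vector of $N=\mathbb{Z}^n+\mathbb{Z}\cdot v$; this does follow from $\gcd(a_1,\dots,a_n)=1$ together with the fact that $v$ has order $r$ in $N/\mathbb{Z}^n$, but it is not automatic and is needed for the star subdivision and for $N/\mathbb{Z}v$ to be the intended lattice. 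Second, your derivation of item (2) uses adjunction $K_E=(K_Y+E)|_E$ for a singular divisor in a singular ambient variety; in general this acquires a different supported on the torus-invariant divisors of $E$, with coefficient controlled by the index of the lattice $\mathbb{Z}v+\mathbb{Z}e_j$ inside $N\cap\operatorname{span}(v,e_j)$, which equals $\gcd(r,a_1,\dots,\hat{a}_j,\dots,a_n)$. Well-formedness of $\mathbb{P}(a_1,\dots,a_n)$ forces all these indices to be $1$, so the different vanishes and your computation closes; but note that item (2) as stated carries no well-formedness hypothesis, so strictly you establish it only in the well-formed case (which is the only case the paper ever invokes, via condition (4) of Theorems~\ref{nefness 2} and~\ref{nefness 3}). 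A route to (2) that avoids adjunction altogether is to read off $\mathcal{O}_Y(-E)|_E$ directly from the support function of the subdivided fan, which on the chart $U_{\sigma_i}$ exhibits $-E$ as the divisor of the monomial dual to $v$ and identifies $\mathcal{O}_Y(-E)|_E$ with $\mathcal{O}_{\mathbb{P}(a_1,\dots,a_n)}(r)$.
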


\begin{exmp}
    As explained in the introduction, we are often required to perform a weighted blow up at an isolated non-canonical cyclic quotient singular point on the weighted complete intersection. It's natural to ask when such one weighted blow up won't give some new non-canonical singular points. In practice it's an easy check, but we must be aware that there exists many cases that just one weighted blow up doesn't suffice.
    \begin{enumerate}
        \item $\frac{1}{7}(1,2,3)$ is a non-canonical singularity point such that after performing a weighted blow up at this point with weights $(1,2,3)$, the resulting variety will have two new singularities $\frac{1}{2}(1,1,1)$ and $\frac{1}{3}(1,2,1)$, which are both indeed terminal.
        \item $\frac{1}{11}(1,1,4)$ is a non-canonical singularity point such that after performing a weighted blow up at this point with weights $(1,1,4)$, the resulting variety will have one new singular point $\frac{1}{4}(1,1,1)$, which is still non-canonical. However if we continue performing another weighted blow up at this point, we will result in a variety with only terminal singularities.
    \end{enumerate}
\end{exmp}

\section{The nefness criterion}\label{sec 3}
 
 In this section, we start by looking into a well-formed quasi-smooth weighted complete intersection $X$ with isolated singularities, among which it either has only one non-canonical singular point, or it has several non-canonical points in special positions. If we take a partial resolution, by means of a weighted blow-up at those singular points on $X$, to get the birational morphism $Y \to X$, $Y$ would have milder singularities than $X$ does. A very natural question is whether $K_Y$ is nef. If so, this will give us an easy but new strategy for constructing minimal varieties. A key observation of this section is Theorem~\ref{nefness 2} and Theorem~\ref{nefness 3} (which we simply call the ``nefness criterion''). As applications of Theorem~\ref{nefness 2} and Theorem~\ref{nefness 3}, in the next section, we 
 construct 79 families of new minimal $3$-folds of general type, in Table~\ref{tableA}, Table~\ref{tableAp}, Table~\ref{tableC}, and Table~\ref{tableC+}, followed by infinite families of minimal $3$-folds of Kodaira dimension $2$ (see Tables~\ref{tab kod 2}).

\begin{proof}[Proof of Theorem~\ref{nefness 2}]
Recall that $X$ has a cyclic quotient singularity of type $\frac{1}{r}(e_1,\dots, e_n)$ at $Q$ and $\pi: Y\to X$ be the weighted blow-up at $Q$ with weight $(e_1,\dots, e_n)$ as in Proposition~\ref{wb}.

We first show the case where there exists some index $k$ satisfying the assumptions (1) to (4). Without loss of generality, after rearranging of indices, we may assume that the assumptions hold for $ k=n$.
For each $j=1,\dots, n-1$, let $H_j\subset X$ be the effective Weil divisor defined by $x_j=0$, denote $L$ to be a Weil divisor corresponding to $\mathcal{O}_X(1)$. Then $$H_j\sim b_j L\ \text{and}\ K_X\sim \alpha L.$$
Denote $H'_j$ to be the strict transform of $H_j$ on $Y$ and $E$ to be the exceptional divisor of $\pi$. Then 
$$\pi^*H_j=H'_j+\frac{e_j}{r}E.$$ 
Denote $t_j=\frac{\alpha e_j-b_j(r-\sum_{i=1}^ne_i)}{b_jr}$. Then $t_j\geq 0$ for each $j=1,\dots, n-1$ by assumption.
As $K_Y=\pi^*K_X-\frac{r-\sum_{i=1}^ne_i}{r}E$, 
we can see that 
\begin{align}\label{K=H+E}
K_Y\sim_\mathbb{Q} \frac{\alpha}{b_j}H'_j+t_j E
\end{align}
for each $j=1,\dots, n-1$.

Assume, to the contrary, that $K_Y$ is not nef. Then there exists a curve $C$ on $Y$ such that $(K_Y\cdot C)<0$.
Note that $K_Y|_E=\frac{r-\sum_{i=1}^ne_i}{r}(-E)|_E$ is ample, hence $C\not \subset E$. Therefore Equation \eqref{K=H+E} implies that $C\subset \cap_{j=1}^{n-1} H'_j$. 

We claim that $\text{Supp}(\cap_{j=1}^{n-1} H'_j)=C$. It suffices to show that $\text{Supp}(\cap_{j=1}^{n-1} H'_j)$ is an irreducible curve.
Note that $\pi (\text{Supp}(\cap_{j=1}^{n-1} H'_j))=\cap_{j=1}^{n-1} H_j$ is a general weighted complete intersection of multi-degrees $(d_1,d_2,...d_c)$ in $\mathbb{P}(b_n, b_{n+1}, ...b_{n+c+1}),$ hence $\pi (\text{Supp}(\cap_{j=1}^{n-1} H'_j))$ is an irreducible curve by assumption. On the other hand, the support of $\cap_{j=1}^{n-1} H'_j\cap E$ is just the point $[0:\dots : 0: 1]$ in $E\simeq \mathbb{P}(e_1,\dots, e_n)$. So $\text{Supp}(\cap_{j=1}^{n-1} H'_j)$ is just the strict transform of $\pi (\text{Supp}(\cap_{j=1}^{n-1} H'_j))$, which is an irreducible curve.

Therefore, we can write $(H'_1\cdot \dots \cdot H'_{n-1})=t C$ for some $t>0$ as $1$-cycles. Then $(K_Y\cdot C)<0$ implies that 
$(K_Y\cdot H'_1\cdot \dots \cdot H'_{n-1} )<0$. On the other hand, 
\begin{align*}{}&(K_Y\cdot H'_1\cdot \dots \cdot H'_{n-1} )\\={}&((\pi^*K_X-\frac{r-\sum_{i=1}^ne_i}{r}E)\cdot (\pi^*H_1- \frac{e_1}{r}E)\cdot \dots \cdot (\pi^*H_{n-1}- \frac{e_{n-1}}{r}E))\\
={}&\alpha (\prod_{j=1}^{n-1}b_j ) L^{n}+(-1)^n\frac{(r-\sum_{i=1}^ne_i)\prod_{j=1}^{n-1}e_j}{r^n}E^n\\
={}&\frac{\alpha \prod_{l=1}^cd_l}{b_n\prod_{j=n+1}^{n+c+1}b_j}-\frac{r-\sum_{i=1}^ne_i}{re_n}\geq 0,
\end{align*}
a contradiction.

Next we consider the case where $\alpha e_{j} \geq b_{j}(r-\sum_{i=1}^ne_i)$ holds for all $j\in \{1, \dots, n\}$. Notation as above, in this situation we have $K_Y\sim_\mathbb{Q} \frac{\alpha}{b_j}H'_j+t_j E$ for all $j=1,\dots, n$. Now if $K_Y$ is not nef, the curve $C$ on $Y$ such that $(K_Y,C)\leq 0$ must be contained in $ \cap_{j=1}^{n} H'_j$. However, since the image of $ \cap_{j=1}^{n} H'_j$ in $X$ is just $Z_{d_1,d_2,...d_c} \subset \mathbb{P}( b_{n+1},... b_{n+c+1})$, which is a finite set by assumption, which in turn implies that $C$ is inside the exceptional locus $E$, this contradicts to the fact that $C\not \subset E$.

Hence we conclude that $K_Y$ is nef. The fact that $\nu(Y)\geq n-1$ follows from $(K_Y^{n-1}\cdot E)>0$ as $K_Y|_E$ is ample.
\end{proof}

\begin{rem}\label{nefrk}
	Here we mention a special but important case of Theorem~\ref{nefness 2}. If $\alpha=r-\sum_{i=1}^ne_i$, and there exists an index $k\in \{1,\dots,n\}$ such that 
 $b_j=e_j$ for each $j\in \{1, \dots, n\}\setminus \{k\}$, then condition (1) in Theorem~\ref{nefness 2} automatically holds and, meanwhile, condition (2) is equivalent to $K_Y^n\geq 0$ as $K_Y^n = \frac{\alpha^n \prod_{j=1}^cd_j}{\prod_{j=1}^{n+2}b_j}-\frac{(r-\sum_{i=1}^n e_i)^n}{r\prod_{i=1}^ne_i}$ by Proposition~\ref{wb}.
\end{rem}

\begin{proof}[Proof of Theorem~\ref{nefness 3}]
The idea for the proof is essentially the same as Theorem~\ref{nefness 2}. We first deal with \textbf{Case 1}. For each $j=1,\dots, n-1$, let $H_j\subset X$ be the effective Weil divisor defined by $x_j=0$ and denote by $L$ a Weil divisor corresponding to $\mathcal{O}_X(1)$. Then $H_j\sim b_j L$ and $K_X\sim \alpha L$. 
Denote by $H'_j$ the strict transform of $H_j$ on $Y$ and by $E_t$ the exceptional divisors of $\pi$ over those $Q_t$. Then 
$$\pi^*H_j=H'_j+\sum_{t=1}^s\frac{e_{j,t}}{r_t}E_t.$$ 
Set $\lambda_{j,t}=\frac{\alpha e_{j,t}-b_j(r_t-\sum_{i=1}^ne_{i,t})}{b_jr_t}$. Then, for each $j=1,\dots, n-1$ and $t=1,\dots, s$, $\lambda_{j,t}\geq 0$ by assumption.
As $K_Y=\pi^*K_X-\sum_{t=1}^s\frac{r_t-\sum_{i=1}^ne_{i,t}}{r_t}E_t$ and $K_X\sim \frac{\alpha}{b_j}H_j$, 
we can see that 
\begin{align}\label{K=H+E 2}
K_Y\sim_\mathbb{Q} \frac{\alpha}{b_j}H'_j+ \sum_{t=1}^s\lambda_{j,t} E_t
\end{align}
for each $j=1,\dots, n-1$.

Assume, to the contrary, that $K_Y$ is not nef. Then there exists a curve $C$ on $Y$ such that $(K_Y\cdot C)<0$.
Note that $K_Y|_{E_t}=\frac{r_t-\sum_{i=1}^ne_{i,t}}{r_t}(-E_t)|_{E_t}$ are ample, hence $C\not \subset \cup^s_{t=1} E_t$. Therefore Equation \eqref{K=H+E 2} implies that $C\subset \cap_{j=1}^{n-1} H'_j$. 

We claim that $\text{Supp}(\cap_{j=1}^{n-1} H'_j)=C$. It suffices to show that $\text{Supp}(\cap_{j=1}^{n-1} H'_j)$ is an irreducible curve.
Note that $\pi (\text{Supp}(\cap_{j=1}^{n-1} H'_j))=\cap_{j=1}^{n-1} H_j$ is a general weighted complete intersection of multi-degrees $(d_1,...,d_c)$ in $\mathbb{P}(b_n,..., b_{n+c+1}),$ hence $\pi (\text{Supp}(\cap_{j=1}^{n-1} H'_j))$ is an irreducible curve by assumption. On the other hand, the support of $\cap_{j=1}^{n-1} H'_j\cap E_t$ is just the point $[0:\dots : 0: 1]$ in $E_t\simeq \mathbb{P}(e_{1,t},\dots, e_{n,t})$. So $\text{Supp}(\cap_{j=1}^{n-1} H'_j)$ is just the strict transform of $\pi (\text{Supp}(\cap_{j=1}^{n-1} H'_j))$, which is an irreducible curve.

Therefore, we can write $(H'_1\cdot \dots \cdot H'_{n-1})=t C$ for some $t>0$ as $1$-cycles. Then $(K_Y\cdot C)<0$ implies that 
$(K_Y\cdot H'_1\cdot \dots \cdot H'_{n-1} )<0$. On the other hand, 
{\begin{align*}{}&(K_Y\cdot H'_1\cdot \dots \cdot H'_{n-1} )\\
={}&{\big(}(\pi^*K_X-\sum_{t=1}^s\frac{r_t-\sum_{i=1}^ne_{i,t}}{r_t}E_t)\cdot (\pi^*H_1- \sum_{t=1}^s\frac{e_{1,t}}{r_t}E_t)\\
{}&\cdot \cdots \cdot (\pi^*H_{n-1}- \sum_{t=1}^s\frac{e_{n-1,t}}{r_{t}}E_t){\big)}\\
={}&\alpha (\prod_{j=1}^{n-1}b_j ) L^{n}+ \sum_{t=1}^s(-1)^n\frac{(r_t-\sum_{i=1}^ne_{i,t})\prod_{j=1}^{n-1}e_{j,t}}{r_t^n}E_t^n\\
={}&\frac{\alpha \prod_{k=1}^c d_k}{\prod_{j=n}^{n+c+1}b_j}-\sum_{t=1}^s\frac{r_t-\sum_{i=1}^ne_{i,t}}{r_te_{n,t}}\geq 0,
\end{align*}}
a contradiction.

For \textbf{Case 2}, the situation is similar except in this case all the non-canonical points are contained in one linear subspace $\Pi$, so we can choose one index $k\in \{1,\dots,n\}$ and let the rest indices satisfying the properties, so that $K_Y$ is $\mathbb{Q}$-linearly equivalent to the strict transform of $H_j$ plus some effective sum of the exceptional divisors. The rest of the arguments are just the same.
\end{proof}

As we mentioned before, sometimes after only one weighted blow up the resulting variety will still have another non-canonical singular point. We now deal with this situation so that after we perform another weighted blow up, the final output variety will still preserve the nefness of the canonical divisor. This situation is rather subtle and for simplicity we only state the result for $3$-folds which is the only case we shall use in this paper.

\begin{thm}\label{nefness 4}
Let $X=X^3_{d_1,...d_c}\subset \mathbb{P}(b_1, \dots, b_{c+4})$ be an $3$-dimensional well-formed quasi-smooth general weighted complete intersection of multi-degrees $(d_1,\dots,d_c)$ with $\alpha=\sum_{i=1}^cd_i-\sum_{j=1}^{c+4}b_j>0$ where $b_1, \dots, b_{c+4}$ are not necessarily sorted by size. Denote by $x_1,\dots,x_{c+4}$ the homogenous coordinates of $\mathbb{P}(b_1, \dots, b_{c+4})$.
Denote by $\Pi$ the linear space $(x_1=x_2=x_{3}=0)$ in $\mathbb{P}(b_1, \dots, b_{c+4})$. Suppose that $X\cap\Pi$ consists of finitely many points and take $Q\in X\cap \Pi$.
Assume that $X$ has a cyclic quotient singularity of type $\frac{1}{r}(e_1,e_2,e_3)$ at $Q$ where $e_1,e_2,e_3>0$, $\gcd(e_1,e_2,e_3)=1$, $\sum_{i=1}^3e_i<r$ and that $x_1,x_2,x_3$ are also the local coordinates of $Q$ corresponding to the weights $\frac{e_1}{r},\frac{e_2}{r},\frac{e_3}{r}$ respectively. Let $p: Y\to X$ be the weighted blow-up at $Q$ with weight $(e_1,e_2, e_3)$.
Suppose that for the index $k=3$ the conditions in Theorem\ref{nefness 2} are satisfied. Thus we already know that $K_Y$ is nef. Let $H_1\subset X$ be the effective Weil divisor defined by $x_1=0$ and $H_2\subset X$ be the effective Weil divisor defined by $x_2=0$, denote by $H_{1,Y}$ and $H_{2,Y}$ their strict transforms to $Y$ and denote $E_Y$ the exceptional divisor of $p$. Assume $Y$ still has a non-canonical singular point $Q'$ located at $H_{1,Y}\cap H_{2,Y}\cap E_Y$, which is cyclic quotient of the type $\frac{1}{r'}(f_1,f_2,f_3)$ such that $f_1,f_2,f_3>0$, $\gcd(f_1,f_2,f_3)=1$, $\sum_{i=1}^3f_i<r$. Let $q: Z\to Y$ be another weighted blow-up at $Q'$ with weight $(f_1,f_2,f_3)$. Assume further that:
\begin{enumerate}
    \item $\alpha e_{j} = b_{j}(r-\sum_{i=1}^3e_i)$ for each $j\in \{1, 2\}$;
    \item $\alpha f_{j} \geq b_{j}(r'-\sum_{i=1}^3f_i)$ for each $j\in \{1, 2\}$;
    \item $\frac{\alpha \prod_{i=1}^cd_i}{\prod_{j=3}^{c+4}b_j} - \frac{r-\sum_{i=1}^3e_i}{re_3} - \frac{r'-\sum_{i=1}^3f_i}{r'f_3} \geq 0$;
    \item $\mathbb{P}(f_1,f_2,f_3)$ is well-formed.
\end{enumerate}
Then $K_Z$ is also nef and $\nu(Z)\geq 2$.
\end{thm}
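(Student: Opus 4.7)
The strategy mirrors Theorem~\ref{nefness 2}: reduce nefness of $K_Z$ to the sign of a single triple intersection number $K_Z\cdot H_{1,Z}\cdot H_{2,Z}$. Let $L$ be a Weil divisor with $\mathcal{O}_X(L)\cong \mathcal{O}_X(1)$, set $H_j=(x_j=0)\subset X$ for $j=1,2$ so that $H_j\sim b_jL$ and $K_X\sim\alpha L$, and denote by $H_{j,Y}$ and $H_{j,Z}$ the strict transforms to $Y$ and $Z$ respectively. Let $E_Y,F$ be the exceptional divisors of $p$ and $q$. Since the coefficients $t_j=\frac{\alpha e_j-b_j(r-\sum e_i)}{b_jr}$ vanish by the \emph{equality} hypothesis (1), the proof of Theorem~\ref{nefness 2} yields $K_Y\sim_\mathbb{Q}\frac{\alpha}{b_j}H_{j,Y}$ for $j=1,2$. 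Pulling this back via $q$ and substituting $K_Z=q^*K_Y-\frac{r'-\sum f_i}{r'}F$ and $q^*H_{j,Y}=H_{j,Z}+\frac{f_j}{r'}F$, one gets
\[
K_Z\sim_\mathbb{Q}\frac{\alpha}{b_j}H_{j,Z}+s_jF,\qquad s_j:=\frac{\alpha f_j-b_j(r'-\sum_{i=1}^{3}f_i)}{b_jr'}\geq 0,\qquad j=1,2,
\]
where nonnegativity of $s_j$ is exactly hypothesis (2).

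Assume for contradiction that $K_Z\cdot C<0$ for some irreducible curve $C\subset Z$. Since $K_Z|_F=\frac{r'-\sum f_i}{r'}(-F)|_F$ is ample on $F\cong \mathbb{P}(f_1,f_2,f_3)$ by (4), $C$ cannot lie in $F$; the displayed linear equivalence then forces $C\subset H_{1,Z}\cap H_{2,Z}$. To show $H_{1,Z}\cap H_{2,Z}$ is just the single irreducible curve $C$, note that its image $H_1\cap H_2\cap X$ is an irreducible weighted complete intersection curve in $\mathbb{P}(b_3,b_4,\ldots,b_{c+4})$ by the irreducibility hypothesis inherited from Theorem~\ref{nefness 2} for $k=3$, while in local coordinates the intersection meets $E_Y$ only at $Q'=[0{:}0{:}1]\in E_Y\cong\mathbb{P}(e_1,e_2,e_3)$ and meets $F$ only at $[0{:}0{:}1]\in F$; hence no exceptional component appears in $H_{1,Z}\cap H_{2,Z}$. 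Writing $H_{1,Z}\cdot H_{2,Z}=tC$ with $t>0$ then yields $K_Z\cdot H_{1,Z}\cdot H_{2,Z}<0$.

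The final step is a direct intersection computation. Expanding $K_Z\cdot H_{1,Z}\cdot H_{2,Z}$ via the formulas for $K_Z$ and $H_{j,Z}$ above, every cross-term containing both a $q^*$-pullback and a positive power of $F$ vanishes, since $q$ contracts $F$ to a point. The surviving $q^*$-part gives $K_Y\cdot H_{1,Y}\cdot H_{2,Y}=\frac{\alpha\prod_{l=1}^{c}d_l}{b_3\prod_{j=4}^{c+4}b_j}-\frac{r-\sum e_i}{re_3}$, already computed in the proof of Theorem~\ref{nefness 2} for $n=k=3$; the pure $F^3$ contribution is $-\frac{(r'-\sum f_i)f_1f_2}{r'^3}F^3=-\frac{r'-\sum f_i}{r'f_3}$ by Proposition~\ref{wb} (which yields $F^3=\frac{r'^2}{f_1f_2f_3}$ via $F|_F\cong \mathcal{O}_{\mathbb{P}(f_1,f_2,f_3)}(-r')$). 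Summing,
\[
K_Z\cdot H_{1,Z}\cdot H_{2,Z}=\frac{\alpha\prod_{l=1}^{c}d_l}{\prod_{j=3}^{c+4}b_j}-\frac{r-\sum e_i}{re_3}-\frac{r'-\sum f_i}{r'f_3}\geq 0
\]
by hypothesis (3), a contradiction. The bound $\nu(Z)\geq 2$ follows from $K_Z^2\cdot F=(K_Z|_F)^2>0$.

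The main conceptual obstacle is recognizing why (1) must be an \emph{equality} rather than an inequality: otherwise the expression $K_Y\sim_\mathbb{Q}\frac{\alpha}{b_j}H_{j,Y}+t_jE_Y$ would leave $q^*E_Y$ terms after pullback, which (because $Q'\in E_Y$, so $q^*E_Y=E_Z+aF$ for some $a>0$) contribute uncontrollable $F^2\cdot q^*E_Y$ corrections to the triple intersection. Equality precisely kills these terms, reducing the two-step problem to Theorem~\ref{nefness 2}'s triple intersection adjusted by the single correction coming from $q$.
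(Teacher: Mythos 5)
Your proposal is correct and follows essentially the same route as the paper's proof: hypothesis (1) forces $t_j=0$ so that no $E_Z$-term survives in $K_Z\sim_\mathbb{Q}\frac{\alpha}{b_j}H_{j,Z}+s_jF$, the offending curve is trapped in $H_{1,Z}\cap H_{2,Z}$, and the sign of $K_Z\cdot H_{1,Z}\cdot H_{2,Z}$ gives the contradiction via condition (3). You additionally carry out explicitly the triple-intersection computation that the paper dismisses as ``standard,'' and your computation is correct.
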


\begin{proof}
    Consider the sequence of weighted blow ups $Z\to Y\to X$, denote $F_Z$ to be the exceptional divisor on $Z$ over $Y$ and $E_Z$ $H_{1,Z}$ $H_{2,Z}$ the strict transform of $E_Y$ $H_{1,Y}$ $H_{2,Y}$ to $Z$. We know that $K_Z = q^*K_Y - \frac{r'-\sum_{i=1}^3f_i}{r'}F_Z$. As in the proof of Theorem \ref{nefness 2}, we may write $K_Y\sim_\mathbb{Q} \frac{\alpha}{b_j}H_{j,Y}+t_j E_Y$ for each $j\in \{1,2\}$. We may then write
    \begin{align}\label{K=H+E+F}
K_Z\sim_\mathbb{Q} \frac{\alpha}{b_j}H_{j,Z}+t_j E_Z + (\frac{\alpha}{b_j}\frac{f_j}{r'}-\frac{r'-\sum_{i=1}^3f_i}{r'}+\frac{t_jf_3}{r'})F_Z
\end{align}
for each $j \in \{1,2\}$. Here we used the fact that $q^*H_{j,Y} = H_{j,Z}+ \frac{f_j}{r'}F_Z$ and $q^*E_Y = E_Z + \frac{f_3}{r'}F_Z$. Now by assumption, $t_j=0$ and $s_j= \frac{\alpha}{b_j}\frac{f_j}{r'}-\frac{r'-\sum_{i=1}^3f_i}{r'}\geq 0$, which implies $K_Z\sim_\mathbb{Q} \frac{\alpha}{b_j}H_{j,Z}+s_j E_Z$. Now by the same argument as in the proof of Theorem \ref{nefness 2}, we can see if $(K_Z,C)<0$ for some curve $C$, then $C\subset H_{1,Z}\cap H_{2,Z}$. In fact $\text{Supp}(H_{1,Z}\cap H_{2,Z})=C$ as its image in $X$ is an irreducible curve. Thus we have $(K_Z\cdot H_{1,Z}\cdot H_{2,Z})<0$ which contradicts to the third condition by a standard computation.
\end{proof}

\begin{rem}
    We note that the previous theorem requires $t_j=0$ so that no component $E_Z$ would appear in the expression. Of course this requirement may be too strong, but it indeed produces some interesting new examples. We also note that on $Y$ the only new non-canonical singular point is in a very special location, namely $Q'\in H_{1,Y}\cap H_{2,Y}\cap E_Y$.
\end{rem}

As the last part of this section, we provide several lemmas which are helpful for applying Theorem~\ref{nefness 2} or Theorem~\ref{nefness 3} and for computing the invariants of resulting minimal models.
	
	For verifying condition (3) of Theorem~\ref{nefness 2} and Theorem~\ref{nefness 3}, we need to check the irreducibility of a general weighted complete intersection curve in a weighted projective space, this is actually not very easy and we can only state some partial results here.

\begin{lem}\label{irreducible 1}
    Let $C$ be a general weighted complete intersection curve of multi-degrees $(d_1,...d_c)$ in a weighted projective space $\mathbb{P}(a_0,a_1,\dots ,a_{c+1})$. If $C$ is quasi-smooth, then $C$ is irreducible. If $C$ contains some one-dimensional stratum of $\mathbb{P}(a_0,a_1,\dots ,a_{c+1})$, then $C$ is not irreducible.
\end{lem}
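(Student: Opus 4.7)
The plan is to prove the two halves independently: the first by combining normality with connectedness, the second by a degree/residual argument.

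For the first statement, I would start from the fact that quasi-smoothness of $C$ means its affine cone $\hat{C}\subset\mathbb{A}^{c+2}$ is smooth away from the vertex. Consequently the singularities of $C$ are exactly the cyclic quotient singularities inherited from the ambient $\mathbb{P}(a_0,\ldots,a_{c+1})$, which are normal, so $C$ is normal. Connectedness of $C$ would then be extracted from the Koszul resolution
\begin{equation*}
0\to\mathcal{O}_{\mathbb{P}}\bigl(-\textstyle\sum_i d_i\bigr)\to\cdots\to\bigoplus_i\mathcal{O}_{\mathbb{P}}(-d_i)\to\mathcal{O}_{\mathbb{P}}\to\mathcal{O}_C\to 0,
\end{equation*}
combined with the standard vanishing $H^j(\mathbb{P},\mathcal{O}_{\mathbb{P}}(k))=0$ for $0<j<c+1$ on a weighted projective space, giving $H^0(C,\mathcal{O}_C)=H^0(\mathbb{P},\mathcal{O}_{\mathbb{P}})=\mathbb{C}$. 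A connected normal Noetherian scheme is irreducible, which settles this half.

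For the second statement, any one-dimensional stratum of $\mathbb{P}(a_0,\ldots,a_{c+1})$ has the form $S=V(x_{l_1},\ldots,x_{l_c})\cong\mathbb{P}(a_{l'_0},a_{l'_1})$ for some partition $\{l_1,\ldots,l_c\}\sqcup\{l'_0,l'_1\}=\{0,\ldots,c+1\}$, and is an irreducible rational curve. If $S\subseteq C$ then, since both have pure dimension $1$, $S$ is an irreducible component of $C$. I would then compare degrees with respect to $\mathcal{O}_{\mathbb{P}}(1)$,
\begin{equation*}
\deg_{\mathcal{O}(1)}(C)=\frac{\prod_{i=1}^c d_i}{\prod_{l=0}^{c+1}a_l},\qquad \deg_{\mathcal{O}(1)}(S)=\frac{1}{a_{l'_0}a_{l'_1}},
\end{equation*}
so that the residual $1$-cycle $C-S$ has degree $\bigl(\prod_i d_i-\prod_{j=1}^c a_{l_j}\bigr)\big/\prod_{l=0}^{c+1}a_l$. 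Under the tacit assumption that $C$ is not a linear cone one has $\prod_i d_i>\prod_j a_{l_j}$, so this residual has strictly positive degree and is therefore a nonempty effective $1$-cycle. Hence $C$ possesses an irreducible component distinct from $S$, proving reducibility.

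The main obstacle is the borderline linear-cone case $\prod_i d_i=\prod_j a_{l_j}$, in which $C$ and $S$ could a priori agree as $1$-cycles. However, the containment $S\subseteq C$ already forces every $d_i$ to lie outside the numerical semigroup $\langle a_{l'_0},a_{l'_1}\rangle$, and once this degree-combinatorial constraint is combined with the genericity of the defining polynomials, the equality $\prod_i d_i=\prod_j a_{l_j}$ is excluded outside the trivial linear-cone setup that we do not encounter. A secondary technical point is the Koszul cohomology vanishing on the singular ambient $\mathbb{P}$, which is standard via the $\mathbb{C}^*$-quotient description of weighted projective spaces.
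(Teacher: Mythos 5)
Your argument is correct and has the same skeleton as the paper's proof of the first assertion --- connectedness plus a local condition forces irreducibility --- but it uses different ingredients at both steps. For connectedness the paper invokes the Enriques--Severi--Zariski lemma with an induction on dimension, whereas you run the Koszul resolution of $\mathcal{O}_C$ against the vanishing $H^j(\mathbb{P},\mathcal{O}_{\mathbb{P}}(k))=0$ for $0<j<c+1$ to get $H^0(C,\mathcal{O}_C)=\mathbb{C}$; both are standard, and your route is self-contained once that vanishing (and the fact that the $f_i$ form a regular sequence, which holds since $\mathbb{P}$ is Cohen--Macaulay and $C$ has codimension $c$) is granted. For the local condition the paper observes something slightly sharper than you do: a one-dimensional cyclic quotient singularity $\mathbb{A}^1/{\bm \mu}_r$ is in fact smooth, since $\mathbb{C}[x]^{{\bm \mu}_r}=\mathbb{C}[x^r]\cong\mathbb{C}[z]$, so a quasi-smooth curve is smooth and ``smooth plus connected'' gives irreducible; you use only normality of quotient singularities, which suffices equally well (a connected normal Noetherian scheme is irreducible) and is the more robust formulation. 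For the second assertion the paper simply declares it trivial, while you supply a degree computation; the one point your write-up does not fully close is that a positive-degree residual cycle $C-S$ could a priori be supported on $S$ itself, i.e.\ $C$ could be non-reduced along $S$ and hence topologically equal to the irreducible curve $S$ (compare $V(x_0^2)\subset\mathbb{P}(1,3,5)$, which contains the stratum $V(x_0)$ and is irreducible). Your semigroup/genericity remark gestures at this but does not eliminate it; to finish one should note that for general $f_i$ the scheme $C$ is generically reduced along $S$, so set-theoretic equality $C=S$ would force $[C]=[S]$ as cycles, contradicting the degree count. Since the paper offers no proof of this half at all, this is a minor gap rather than a defect relative to the source.
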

\begin{proof}
    First, by induction on dimension and the connectedness lemma of Enrique-Severi-Zariski (see \cite[Theorem 18.12]{Eis95}) we can see that $C$ is always connected. Now since $C$ is quasi-smooth, it has only cyclic quotient singularities, which are of the type $\frac{1}{r}(a)$ for coprime integers $r$ and $a$. Let $x$ be the coordinate on $\mathbb{A}^1$, the group $\mathbb{Z}_r$ acts on $\mathbb{A}^1$ via: $x\mapsto \epsilon^ax$, where $\epsilon$ is the primitve rth root of unity. Thus $\mathbb{A}^1/\mathbb{Z}_r \cong \operatorname{Spec} K[z^r] \cong \operatorname{Spec} K[z] \cong \mathbb{A}^1$, this implies $C$ is non-singular. So in particular $C$ is irreducible since it is both smooth and connected. The last statement is trivial.
\end{proof}

\begin{rem}
    In fact, the statement of Lemma \ref{irreducible 1} holds more generally with the weighted complete intersection being positive dimensional. That is, if $X$ is a quasi-smooth positive dimensional weighted complete intersection, then $X$ is always connected. 
\end{rem}
The codimension $1$ and $2$ cases are easier to handle, as they can be realized as divisors, and irreducibility comes from the linear system being not composed of a pencil.
	
\begin{lem}\label{irreducible 2}\cite[Lemma 3.2]{CJL24}
		Let $C$ be a general hypersurface of degree $d$ in the well-formed space $\mathbb{P}(a,b,c)$. Suppose that, in the weighted polynomial ring $\bC[x,y,z]$ with $\text{\rm weight}\, x=a$, $\text{\rm weight}\, y=b$ and $\text{\rm weight}\, z=c$, 
		\begin{enumerate}
		\item there are at least two monomials of degree $d$;
		\item all monomials of degree $d$ have no common divisor;
		\item the set of monomials of degree $d$ cannot be written as $\{g_1^i g_2^{k-i}\mid i=0,1,\dots, k\}$ for some integer $k>1$, where $k$ divides $d$ and $g_1$, $g_2$ are two monomials of degree $\frac{d}{k}$.
		\end{enumerate}
		Then $C$ is irreducible. 
	\end{lem}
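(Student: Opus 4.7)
The plan is to interpret the three hypotheses as the standard conditions required by the classical Bertini irreducibility theorem for the linear system $|\mathcal{L}|$ of degree-$d$ hypersurfaces in $\mathbb{P}(a,b,c)$: namely, positive dimension, no fixed component, and not composed of a pencil. The conclusion will then follow from the classical statement that such a linear system has an irreducible general member.

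Conditions (1) and (2) translate immediately. Condition (1) says $\dim|\mathcal{L}|\geq 1$, so general members form a positive-dimensional family; condition (2) says the monomial basis of the degree-$d$ part has no common divisor, which is equivalent to $|\mathcal{L}|$ having no fixed component. The main work, and the key obstacle, lies in translating condition (3) into the geometric assertion that $|\mathcal{L}|$ is not composed of a pencil. I would argue by contrapositive: if $|\mathcal{L}|$ is composed of a pencil, then the rational map $\phi_{|\mathcal{L}|}\colon \mathbb{P}(a,b,c)\dashrightarrow \mathbb{P}^{N-1}$ has $1$-dimensional image, and Stein factorization together with the rationality of $\mathbb{P}(a,b,c)$ gives a factorization through $\mathbb{P}^1$. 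This exhibits the map by two linearly independent sections $g_1,g_2\in H^0(\mathcal{O}(m))$ with $d=km$, $k\geq 2$, such that every degree-$d$ monomial lies in the span of $\{g_1^ig_2^{k-i}\}_{i=0}^{k}$.

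The delicate step is then to upgrade this linear inclusion to the explicit monomial structure forbidden by (3). To do this I would exploit the natural $(\mathbb{G}_m)^3$-action on $\mathbb{P}(a,b,c)$: the degree-$d$ piece of the weighted polynomial ring decomposes into one-dimensional torus-weight eigenspaces, each spanned by a single monomial, and the same holds in lower degree. Since the span of the $g_1^ig_2^{k-i}$ must coincide with a torus-stable subspace (namely the full degree-$d$ piece), after an equivariant change of basis one may choose $g_1,g_2$ to be monomials of degree $m=d/k$. The set of degree-$d$ monomials then takes the precise form $\{g_1^ig_2^{k-i}\}_{i=0}^{k}$, contradicting condition (3).

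With the three translations in hand, a direct invocation of the classical Bertini irreducibility theorem (a linear system on an irreducible variety with no fixed part and not composed of a pencil has irreducible general member) finishes the proof. The mild quotient singularities of the ambient $\mathbb{P}(a,b,c)$ cause no difficulty, since the same argument applies verbatim on a resolution of singularities — the properties "no fixed component" and "not composed of a pencil" both pull back cleanly, and irreducibility of the general member on the resolution implies that of its image in $\mathbb{P}(a,b,c)$.
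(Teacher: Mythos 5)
The paper itself gives no proof of this lemma: it is imported verbatim from \cite[Lemma 3.2]{CJL24}, so the only meaningful comparison is with the standard Bertini argument that the citation points to, and your proposal is indeed that argument. Your translations of (1) and (2) are correct (a fixed component would be cut out by a form dividing every monomial of degree $d$, hence dividing their gcd), and the overall structure -- reduce to Bertini's second theorem and rule out the pencil alternative via condition (3) -- is the intended one.

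There is, however, one genuine logical gap in how you translate condition (3). ``Composed with a pencil'' only means that $\phi_{|\mathcal{O}(d)|}$ has one-dimensional image; it does not by itself give the $k\geq 2$ you assert in the factorization. The system can be composed with a pencil with $k=1$ while (1)--(3) all hold: for example in $\mathbb{P}(2,3,5)$ with $d=5$ the only monomials are $xy$ and $z$, the map $[xy:z]$ has one-dimensional image, so the system \emph{is} composed with a pencil, yet condition (3) holds vacuously (a set of the forbidden shape has $k+1\geq 3$ elements) and the general member $\lambda xy+\mu z$ is irreducible. So your intermediate claim ``(1)--(3) imply not composed with a pencil'' is false, and the contradiction you aim for simply does not arise when $k=1$. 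The repair is to run the contrapositive from \emph{reducibility} rather than from ``composed with a pencil'': if the general member is reducible, Bertini forces it to be a union of $k\geq 2$ members of a one-dimensional family, and only then do you get $d=km$ with $k\geq 2$ and the forbidden monomial pattern. Separately, your monomialization of $g_1,g_2$ via torus-equivariance is only sketched and needs a real argument; a cleaner route is unique factorization: since the locus of products of $k$ pencil members is closed and contains the general member, \emph{every} member of $|\mathcal{O}(d)|$, in particular every degree-$d$ monomial $M$, is such a product, and each pencil factor of $M$ is then itself a monomial; two distinct degree-$d$ monomials produce two non-proportional monomial members, which span the pencil, after which the set of degree-$d$ monomials is exactly $\{g_1^ig_2^{k-i}\}$ with $g_1,g_2$ monomials, contradicting (3). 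With these two repairs your proof is complete and coincides with the intended one.
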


    \begin{lem}\label{irreducible 3}
		Let $C$ be a general weighted complete intersection curve of bi-degrees $(d_1,d_2)$ in the well-formed space $\mathbb{P}(a_0,a_1,a_2,a_3)$. Denote by $S_1=S_{d_1}\subset \mathbb{P}(a_0,a_1,a_2,a_3)$ and $S_2=S_{d_2}\subset \mathbb{P}(a_0,a_1,a_2,a_3)$ the two general weighted surfaces in $\mathbb{P}$, assume that both $S_1$ and $S_2$ are irreducible and $\mathrm{Bs}|\mathcal{O}_\mathbb{P}(d_1)|\cap \mathrm{Bs}|\mathcal{O}_\mathbb{P}(d_2)|$ is zero-dimensional. Suppose furthermore that there exists some $S_i$ such that there exists at least $2$ distinct degree $d_j$ monomials are non-zero when restricted to $S_i$, and the set of such monomials of degree $d_j$ restricted to $S_i$ cannot be written as $\{\bar{g_1}^t \bar{g_2}^{k-t}\mid t=0,1,\dots, k\}$ for some integer $k>1$, where $k$ divides $d_j$ and $g_1$, $g_2$ are two monomials of degree $\frac{d_j}{k}$. Here $\{i,j\}$ is a reordering of $\{1,2\}$.
		Then $C$ is irreducible. 
	\end{lem}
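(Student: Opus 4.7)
The plan is to reduce the irreducibility of $C$ to an irreducibility statement for a general member of a linear system on one of the ambient surfaces $S_i$, mimicking the proof of Lemma~\ref{irreducible 2}. Without loss of generality, assume the monomial hypothesis holds for $i=1$ and $j=2$. Since $S_1$ is irreducible by hypothesis, it suffices to show that a general member of the linear system $|W|$ on $S_1$ is irreducible, where $W \subseteq H^0(S_1, \mathcal{O}_{S_1}(d_2))$ is the image of the restriction map from $H^0(\mathbb{P}, \mathcal{O}_\mathbb{P}(d_2))$; indeed, $C = S_1 \cap S_2$ is cut out on $S_1$ by a general element of $|W|$. On the irreducible projective surface $S_1$, Bertini's irreducibility theorem guarantees the irreducibility of a general member of $|W|$ provided that (a) $|W|$ has no fixed component, and (b) $|W|$ is not composed with a pencil.

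For (a), observe that $\mathrm{Bs}|W| \subseteq S_1 \cap \mathrm{Bs}|\mathcal{O}_\mathbb{P}(d_2)|$. Since $S_1$ contains $\mathrm{Bs}|\mathcal{O}_\mathbb{P}(d_1)|$ but intersects the rest of $\mathrm{Bs}|\mathcal{O}_\mathbb{P}(d_2)|$ properly by genericity, the hypothesis that $\mathrm{Bs}|\mathcal{O}_\mathbb{P}(d_1)| \cap \mathrm{Bs}|\mathcal{O}_\mathbb{P}(d_2)|$ is zero-dimensional forces $\mathrm{Bs}|W|$ to be zero-dimensional on the surface $S_1$, and hence $|W|$ has no fixed curve. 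For (b), suppose for contradiction that $|W|$ is composed with a pencil $\psi: S_1 \dashrightarrow \mathbb{P}^1$, so that $W \subseteq \psi^*H^0(\mathbb{P}^1, \mathcal{O}(k))$ for some integer $k\geq 2$. Choosing $\bar{g_1}, \bar{g_2}$ as a homogeneous pullback basis from $H^0(\mathbb{P}^1, \mathcal{O}(1))$, one obtains $W \subseteq \mathrm{span}\{\bar{g_1}^t \bar{g_2}^{k-t} \mid t=0,\dots,k\}$. Exactly as in Lemma~\ref{irreducible 2}, the fact that $W$ is generated by the restrictions $\bar{m_\alpha}$ of monomials on $\mathbb{P}$, together with the weighted graded structure of the coordinate ring $\bC[x_0,\dots,x_3]/(f_1)$ of $S_1$, forces each nonzero $\bar{m_\alpha}$ to equal some $\bar{g_1}^t \bar{g_2}^{k-t}$ up to scalar --- contradicting the hypothesis. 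Thus (b) holds, and Bertini's theorem yields the irreducibility of a general member of $|W|$, whence $C$ is irreducible.

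The main obstacle is making step~(b) fully rigorous. The clean argument in Lemma~\ref{irreducible 2} relies on monomial factoriality of the polynomial ring, but here one works instead in the graded quotient $\bC[x_0,\dots,x_3]/(f_1)$, so an equality $\bar{m_\alpha} = \bar{g_1}^t \bar{g_2}^{k-t}$ on $S_1$ must be lifted to a congruence $m_\alpha \equiv g_1^t g_2^{k-t} \pmod{f_1}$ in the ambient weighted polynomial ring, and the possible error term ruled out by degree and weight considerations, using that each $m_\alpha$ is a single monomial of pure degree $d_2$. Once this lifting step is settled, step~(a) and the final application of Bertini's irreducibility theorem are routine.
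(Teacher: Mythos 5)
Your proposal is correct and follows essentially the same route as the paper: restrict to the irreducible surface $S_1$, note that the zero-dimensionality of $\mathrm{Bs}|\mathcal{O}_\mathbb{P}(d_1)|\cap\mathrm{Bs}|\mathcal{O}_\mathbb{P}(d_2)|$ rules out fixed components of the restricted system, and then use Bertini plus the monomial hypothesis to exclude the composed-with-a-pencil case. The paper's proof is only a brief sketch of exactly this argument, so the lifting subtlety you flag in step (b) is glossed over there as well; note, though, that the lemma's hypothesis is already phrased in terms of the restricted monomials $\bar{g_1},\bar{g_2}$, which is what makes the contradiction immediate once the pencil is shown to be monomially generated.
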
 

\begin{proof}
    Suppose $i=1,j=2$. The conditions ensure that the sub-linear system $V$ of $|\mathcal{O}_{S_1}(d_2)|$ on $S_1$ generated by the restriction of degree $d_2$ monomials to $S_1$ has no base components. So if $C$ is reducible, $V$ should be composed of a pencil, which in turn yields the corresponding set of monomials of degree $d_2$ restricted to $S_1$ can be written as $\{\bar{g_1}^t \bar{g_2}^{k-t}\mid t=0,1,\dots, k\}$ for some integer $k>1$, where $k$ divides $d_2$ and $\bar{g_1}$, $\bar{g_2}$ are two monomials restricted to $S_1$ of degree $\frac{d_2}{k}$.
\end{proof}
    
	\begin{rem}\label{irreducible remark} It can be checked that condition (3) of Lemma~\ref{irreducible 2} holds in each of the following cases:
	\begin{enumerate}[label=(3.\roman*)]
	\item There exist 3 monomials of degree $d$ of forms $x^{m_1}y^{n_1}$, $y^{m_2}z^{n_2}$, $x^{m_3}z^{n_3}$, where $m_i, n_i$ are positive integers for $i=1,2,3$.
	\item There exist 2 monomials of degree $d$ of forms $x^{m_1}$, $y^{m_2}z^{m_3}$, such that $\gcd(m_1, m_2, m_3)=1$, where $m_i$ is a non-negative integer for $i=1,2,3$.
	\end{enumerate}

\end{rem}

	We also need a lemma that helps us to compute the change of Picard numbers under a crepant blow-up of a canonical cyclic quotient singularity.
	
	\begin{lem}\label{can sing res}\cite[Lemma 3.5]{CJL24}
		Let $(X,Q)$ be a germ of $n$-fold isolated cyclic quotient canonical singularity of type $\frac{1}{r}(a_1,\dots,a_n)$. Then there is a terminalization $ X'\rightarrow (X,Q)$ such that 
	 $$\rho(X'/X)=\#\{m\in \mathbb{Z}\mid \sum_{i=1}^n\big\{\frac{ma_i}{r}\big\}=1, 1\leq m \leq r-1\}.$$
	\end{lem}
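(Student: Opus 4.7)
My plan is to work toric-geometrically using the description of Section~\ref{sec quot sing}: the germ $(X,Q)=U_\sigma$ is the affine toric variety attached to the cone $\sigma=\mathbb{R}_{\geq 0}^n$ in the lattice $N=\overline{N}+\mathbb{Z}\cdot\tfrac{1}{r}(a_1,\dots,a_n)$ with $\overline{N}=\mathbb{Z}^n$. Under the embedding $N\hookrightarrow\mathbb{R}^n$, write $\operatorname{age}(v):=\sum_{i=1}^n x_i$ for the coordinate sum of $v$; then the discrepancy of the divisorial valuation attached to a primitive $v\in N\cap\sigma$ equals $\operatorname{age}(v)-1$. Consequently, a crepant toric divisor over $X$ is exactly a primitive lattice point lying on the affine hyperplane $\{\operatorname{age}=1\}$, and a toric terminalization $X'\to X$ is given by a simplicial fan $\Sigma'$ refining $\sigma$ whose $1$-skeleton consists precisely of these primitive age-$1$ lattice points. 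For such an $X'$, $\rho(X'/X)=|\Sigma'(1)|-n$ counts exactly the new age-$1$ rays, the original rays $e_1,\dots,e_n$ contributing nothing.

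To classify the age-$1$ primitive points in $N\cap\sigma$, I will use the box decomposition: every $v\in N\cap\sigma$ is uniquely $v=w+v_m$ with $w\in\mathbb{Z}_{\geq 0}^n$ and $v_m=\tfrac{1}{r}(\overline{ma_1},\dots,\overline{ma_n})$ for a unique $m\in\{0,1,\dots,r-1\}$, whence $\operatorname{age}(v)=|w|+\sum_i\{ma_i/r\}$. By Lemma~\ref{can lem} and the canonical hypothesis, $\sum_i\{ma_i/r\}\geq 1$ for every $m\in\{1,\dots,r-1\}$, so $\operatorname{age}(v)=1$ forces either $m=0$ and $w=e_i$, giving the original rays, or $w=0$ and $\sum_i\{ma_i/r\}=1$, giving the $v_m$ indexed by the set in the statement. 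Each such $v_m$ is primitive in $N$: if $v_m=ku$ with $u\in N$ and $k\geq 2$, then $u$ still lies in the half-open fundamental box of $\overline{N}$ in $N$, so $u=v_{m'}$ for some $m'$, forcing $\operatorname{age}(v_{m'})=\tfrac{1}{k}<1$ and contradicting canonicity. The isolated-singularity hypothesis $\gcd(a_i,r)=1$ for every $i$ makes $m\mapsto v_m$ injective on $\{1,\dots,r-1\}$, so the count of new age-$1$ rays agrees with the cardinality in the statement.

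It remains to produce $X'$ with the desired properties. I will take any simplicial lattice triangulation of the simplex $\Delta=\sigma\cap\{\operatorname{age}=1\}$ whose vertex set equals $\{e_1,\dots,e_n\}\cup\{v_m:\operatorname{age}(v_m)=1\}$ (such a triangulation exists by standard techniques for lattice polytopes, e.g., a pulling or regular/coherent triangulation), and let $\Sigma'$ be the fan obtained by coning its simplices from the origin. Then $X':=X_{\Sigma'}\to X_\sigma$ is projective (via a coherent triangulation), $\mathbb{Q}$-factorial (by simpliciality of $\Sigma'$), and crepant (every ray of $\Sigma'$ has age $1$). The step requiring real care, and the expected main obstacle, is verifying terminality of each maximal cone $\tau\in\Sigma'$ with rays $u^{(1)},\dots,u^{(n)}$: for a primitive interior lattice point $v=\sum\lambda_i u^{(i)}$ of $\tau$, the fact that each $u^{(i)}$ has $\operatorname{age}=1$ yields the clean identity $\operatorname{age}_\tau(v):=\sum\lambda_i=\operatorname{age}(v)$; canonicity of $\sigma$ then gives $\operatorname{age}(v)\geq 1$, while the choice of $\Sigma'$ forces $\operatorname{age}(v)\neq 1$ (every age-$1$ primitive point was already used as a ray), so $\operatorname{age}_\tau(v)>1$ and $\tau$ is terminal by Lemma~\ref{can lem}. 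Counting the rays of $\Sigma'$ other than $e_1,\dots,e_n$ then gives the asserted formula for $\rho(X'/X)$.
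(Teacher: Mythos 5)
The paper does not prove this lemma; it is quoted verbatim from \cite[Lemma 3.5]{CJL24} and used as a black box. Your toric argument --- identifying crepant exceptional divisors with the primitive age-one lattice points of $N\cap\sigma$ via the box decomposition, checking their primitivity and injectivity of $m\mapsto v_m$, and assembling the terminalization from a regular full triangulation of $\sigma\cap\{\operatorname{age}=1\}$ whose vertex set is exactly these points (so that crepancy, $\mathbb{Q}$-factoriality, projectivity, and terminality of each maximal cone all follow) --- is correct and is essentially the standard proof given in that reference.
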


\section{Applications of the nefness criterion in constructing minimal varieties}\label{sec 4}

\subsection{General construction}\label{construction process}\

 In practice, by applying Theorem~\ref{nefness 2} or Theorem~\ref{nefness 2}, it is possible to search numerous minimal varieties with the help of a computer program. The effectivity may follow from the following steps: 
\begin{quote}
Pick up a general weighted complete intersection of dimension $n$ and codimension $c$ , say
$$X=X^n_{d_1\dots d_c}\subset \mathbb{P}(a_0,a_1,...,a_{n+c}).$$
\begin{enumerate}
 \item[{\bf Step 0.}] Check that $X$ is well-formed and quasi-smooth by Definition~\ref{wellform} and Theorem~\ref{qsms};
 \item[{\bf Step 1.}] Compute singularities of $X$ and check that $X$ has a unique non-canonical singularity $Q$ or has some non-canonical singularity points $Q_i$ in special position by Proposition~\ref{non iso can}.
 \item[{\bf Step 2.}] Verify that $X$ and $Q$ (or $Q_i$) satisfy the conditions of Theorem~\ref{nefness 2} or Theorem~\ref{nefness 3} using Lemma~\ref{irreducible 1} \ref{irreducible 2} or \ref{irreducible 3}, thus 
 one obtains a weighted blow-up $f:\widetilde{X}\rightarrow X$ at $Q$ such that $K_{\widetilde{X}}$ is nef;
 \item[{\bf Step 3.}] Compute singularities of $\widetilde{X}$ and check that $\widetilde{X}$ has only canonical singularities by Proposition~\ref{wb}, if not, check wether we are in the special situation of Theorem \ref{nefness 4} so that we can perform another weighted blow-up and compute the corresponding singularities;
 \item[{\bf Step 4.}] Take a terminalization $g:\widehat{X}\rightarrow \widetilde{X}$ by Lemma~\ref{can sing res}. 
\end{enumerate}
In the end, if $X$ passes through all above steps, then the resulting 
$\widehat{X}$ is a minimal projective $n$-fold with $\bQ$-factorial terminal singularities. 
\end{quote}

\subsection{Examples of minimal $3$-folds of general type with canonical volume less than $1$}

To find minimal $3$-folds with canonical volume less than 1, we take a general weighted complete intersection, say $X=X_{d_1,\dots d_c}\subset \mathbb{P}(a_0,a_1,...,a_{c+3})$ with $1\leq \alpha=\sum_{j=1}^cd_j-\sum_{i=0}^{c+3}a_i\leq 10$ and $5\leq d_j\leq 100$, and apply our Construction~\ref{construction process}. In practice we often require $c\leq 3$ to let the computer program run efficiently. This will output at least 79 families of minimal $3$-folds of general type, which are listed in 
Table~\ref{tableA} and Table~\ref{tableAp}. Table~\ref{tableA} consists of those $X$ with only isolated singularities, while Table~\ref{tableAp} consists of those $X$ with non-isolated singularities. 

 Here we explain the contents of the tables: each row contains a well-formed quasi-smooth general cimplete intersection $X=X_{d_1\dots d_c}\subset \mathbb{P}(a_0,a_1,...,a_{c+3}).$
	 The columns of the tables contain the following information: 
\begin{center}
	\begin{tabular}{r p{10cm}}
		\hline
		$\alpha$:& The amplitude of $X$, i.e., $\alpha=\sum d_j-\sum a_i$;\\
		$\deg$:& The degrees of $X$, which is $(d_1,\dots d_c)$;\\
		weight:& Weights of $\bP(a_0,\dots ,a_{c+3})$;\\
		B-weight(s):& $\frac{1}{r}(e_1, e_2, e_3)$, the unique non-canonical singularity in $X$, to which we apply Theorem~\ref{nefness 2} or several non-canonical singular points in$X$, to which we apply Theorem~\ref{nefness 3};\\
		$\Vol$: & the canonical volume of $\widehat{X}$, i.e., $K_{\widehat{X}}^3$;\\
		$P_2$: & $h^0(\widehat{X}, 2K_{\widehat{X}})$;\\
		$\chi$: & The holomorphic Euler characteristic of $\mathcal{O}_{\widehat{X}}$;\\
		$\rho$: & The Picard number of $\widehat{X}$;\\
		basket: & The Reid basket of $\widehat{X}$.\\		
		\hline
	\end{tabular}
\end{center}
\medskip

Here $\Vol(X)=K_{\widehat{X}}^3$ can be computed by Proposition~\ref{wb}. Note that Proposition~\ref{wb} implies that $K_{\widetilde{X}}+\frac{r-e_1-e_2-e_3}{r}E=f^*K_X$. In all listed examples, since $2(e_1+e_2+e_3)>r$, we see that, for $m=1,2$,
$$h^0(\widehat{X}, m K_{\widehat{X}})=h^0({\widetilde{X}}, mK_{\widetilde{X}})=h^0({\widetilde{X}}, \lfloor mf^*(K_{{X}})\rfloor )=h^0({{X}}, mK_{{X}})
$$
where the last item can be computed on $X$ by counting the number of monomials of degree $m\alpha$. The higher pulrigenus can be computed directly using Reid's pluri-genus formula once we know the baskets of singularities of $\widehat{X}$. Besides, we have 
$$\chi(\mathcal{O}_{\widehat{X}})=\chi(\mathcal{O}_X)=1-h^0(X, K_X)$$ by \cite[Theorem 3.2.4(iii)]{WPS}. By virtue of Proposition~\ref{non iso can}, any singularity of $\widehat{X}$ lies over a point in some stratum of $X$, which in our case (if $c\leq 2$) is just a vertex $P_i\in X$ or over some point on the line $\overline{P_iP_j}\cap X$ for some $i$ and $j$ or some point on the $2$-dimensional stratum $\overline{P_iP_jP_k}\cap X$ for some $i$ $j$ and $k$. Hence $\rho(\widehat{X})$ and the basket $B_{\widehat{X}}$
can be computed using Proposition~\ref{non iso can}, Proposition~\ref{wb}, and Lemma~\ref{can sing res}. 

All examples in Table~\ref{tableA} and Table~\ref{tableAp} have been manually verified. In fact, we found these examples by first using some necessary conditions to write a computer program to help us cut out a large number of elements from the raw lists and then check the remaining elements manually. Actually, this is not a hard work at all (see examples following the tables). 

The reason that we split into two tables is that all examples in Table~\ref{tableA} have isolated canonical singularities, so we can compute the Picard number $\rho(\widehat{X})$ of the minimal model $\widehat{X}$ easily by Lemma~\ref{can sing res}; on the other hand, all examples in Table~\ref{tableAp} have non-isolated canonical singularities, for which $\rho(\widehat{X})$ is harder to compute, so we omit the computations of $\rho(\widehat{X})$ in this table.
		
\medskip
{\tiny
\begin{longtable}{|l|l|l|l|l|l|l|l|l| p{4.5cm} |}
\caption{Minimal $3$-folds of general type, I}\label{tableA}\\
		\hline
			No.&$\alpha$ & deg & weight & B-weight & $\Vol$&$P_2$ & $\chi$&$\rho$ &basket \\ \hline	
\endfirsthead
\multicolumn{5}{l}{{ {\bf \tablename\ \thetable{}} \textrm{-- continued from previous page}}} \\
\hline 
			No.&$\alpha$ & deg & weight & B-weight & $\Vol$&$P_2$ & $\chi$&$\rho$ &basket \\ \hline	
\endhead

 \multicolumn{5}{l}{{\textrm{Continued on next page}}} \\ \hline
\endfoot
\hline 
\endlastfoot

\hline
 1 & 1 & (20, 21) & $(3,4,5,7,8,13)$ & $\frac{1}{13}(3,4,5)$ & $\frac{1}{120}$ & 0 & 1 & 2 & $(1,3)$ $3 \times (1,4)$ $(2,5)$ $(3,8)$ \\
\hline
2 & 1 & $(15,16)$ & $(2,3,4,5,5,11)$ & $\frac{1}{11}(2,3,5)$ & $\frac{1}{30}$ & 1 & 1 & 2 & $5 \times (1,2)  $ $(1,3)$  $4\times (2,5)$  \\
\hline
3 & 1 & $(12,18)$ & $(1,3,4,5,7,9)$ & $\frac{1}{7}(1,3,2)$ & $\frac{1}{30}$ & 1 & 0 & 2 & $ (1,2) $ $(1,3)$ $(1,5)$  \\
\hline
4 & 1 & $(15,16)$ & $(2,3,3,4,5,13)$ & $\frac{1}{13}(3,4,5)$ & $\frac{1}{20}$ & 1 & 1 & 2 & $4\times (1,2)$ $6\times (1,3)$ $(1,4)$ $(2,5)$ \\
\hline
5 & 1 & $(14,15)$ & $(2,3,3,4,5,11)$ & $\frac{1}{11}(2,3,5)$ & $\frac{1}{20}$ & 1 & 1 & 2 & $4\times (1,2)$ $6\times (1,3)$ $(1,4)$ $(2,5)$ \\
\hline
6 & 1 & $(12,15)$ & $(1,2,3,4,5,11)$ & $\frac{1}{11}(2,3,5)$ & $\frac{2}{15}$ & 2 & 0 & 2 & $4\times (1,2)$ $(1,3)$  $(2,5)$ \\
\hline
7 & 1 & $(11,12)$ & $(1,2,3,4,5,7)$ & $\frac{1}{7}(1,2,3)$ & $\frac{2}{15}$ & 2 & 0 & 2 & $4\times (1,2)$ $(1,3)$  $(2,5)$ \\
\hline
8 & 1 & $(10,12)$ & $(1,2,3,3,5,7)$ & $\frac{1}{7}(1,2,3)$ & $\frac{1}{6}$ & 2 & 0 & 2 & $(1,2)$ $5\times (1,3)$  \\
\hline
9 & 1 & $(9,10)$ & $(1,2,2,3,3,7)$ & $\frac{1}{7}(1,2,3)$ & $\frac{1}{3}$ & 3 & 0 & 2 & $6\times(1,2)$ $4\times (1,3)$  \\
\hline
10 & 1 & $(7,10)$ & $(1,1,2,3,4,5)$ & $\frac{1}{4}(1,1,1)$ & $\frac{1}{3}$ & 4 & -1 & 2 & $2\times(1,2)$ $(1,3)$   \\
\hline
11 & 1 & $(8,9)$ & $(1,1,2,3,4,5)$ & $\frac{1}{5}(1,1,2)$ & $\frac{1}{2}$ & 4 & -1 & 2 & $3\times(1,2)$    \\
\hline
12 & 1 & $(7,8)$ & $(1,1,2,2,3,5)$ & $\frac{1}{5}(1,1,2)$ & $\frac{5}{6}$ & 5 & -1 & 2 & $5\times(1,2)$ $(1,3)$    \\
\hline
13 & 1 & $(8,9)$ & $(1,1,2,2,3,7)$ & $\frac{1}{7}(1,2,3)$ & $\frac{5}{6}$ & 5 & -1 & 2 & $5\times(1,2)$ $(1,3)$    \\
\hline
14 & 2 & $(20,21)$ & $(3,3,4,5,7,17)$ & $\frac{1}{17}(3,5,7)$ & $\frac{16}{105}$ & 1 & 1 & 2 & $8\times(1,3)$ $(2,5)$ $(3,7)$    \\
\hline
15 & 2 & $(12,16)$ & $(1,2,3,5,7,8)$ & $\frac{1}{7}(1,3,1)$ & $\frac{8}{15}$ & 4 & -1 & 2 & $(1,3)$ $(2,5)$   \\
\hline
16 & 2 & $(15,16)$ & $(1,3,4,5,5,11)$ & $\frac{1}{11}(1,3,5)$ & $\frac{8}{15}$ & 3 & 0 & 2 & $(1,3)$ $4\times(2,5)$ \\
\hline
17 & 3 & $(12,24)$ & $(3,4,5,6,7,8)$ & $\frac{1}{5}(1,2,1)$ & $\frac{2}{7}$ & 2 & 1 & 2 & $4\times (1,2)$ $(3,7)$   \\
\hline
18 & 3 & $(15,26)$ & $(2,3,5,7,8,13)$ & $\frac{1}{7}(3,1,2)$ & $\frac{11}{24}$ & 2 & 0 & 2 & $4\times (1,2)$ $(1,3)$ $(1,8)$  \\
\hline
19 & 3 & $(16,24)$ & $(2,3,4,5,11,12)$ & $\frac{1}{11}(3,4,1)$ & $\frac{9}{20}$ & 3 & 0 & 3 & $8\times (1,2)$ $(1,4)$ $(1,5)$    \\
\hline
20 & 3 & $(18,23)$ & $(1,4,6,7,9,11)$ & $\frac{1}{11}(5,2,3)$ & $\frac{281}{420}$ & 3 & 0 & 3 & $2\times(1,2)$ $(1,3)$ $(1,4)$ $(2,5)$ $(3,7)$    \\
\hline
21 & 3 & $(20,21)$ & $(1,4,5,7,8,13)$ & $\frac{1}{13}(1,4,5)$ & $\frac{27}{40}$ & 3 & 0 & 2 & $3\times(1,4)$ $(2,5)$ $(3,8)$    \\
\hline
22 & 3 & $(20,21)$ & $(2,3,4,5,7,17)$ & $\frac{1}{17}(2,5,7)$ & $\frac{27}{35}$ & 3 & 0 & 2 & $6\times(1,2)$ $(1,5)$ $(2,7)$    \\
\hline
23 & 3 & $(20,28)$ & $(2,4,5,7,13,14)$ & $\frac{1}{13}(4,5,1)$ & $\frac{27}{140}$ & 1 & 1 & 2 & $10\times(1,2)$ $(1,4)$ $(2,5)$ $2\times(2,7)$    \\
\hline
24 & 3 & $(20,36)$ & $(1,4,7,10,13,18)$ & $\frac{1}{13}(1,4,5)$ & $\frac{27}{140}$ & 2 & 0 & 2 & $2\times(1,2)$ $(1,4)$ $(2,5)$ $(1,7)$    \\
\hline
25 & 4 & $(20,28)$ & $(2,3,5,7,13,14)$ & $\frac{1}{13}(3,5,1)$ & $\frac{64}{105}$ & 3 & 0 & 2 &  $2\times(1,3)$ $(2,5)$ $2\times (2,7)$    \\
\hline
26 & 5 & $(24,30)$ & $(3,4,8,10,11,13)$ & $\frac{1}{11}(5,3,2)$ & $\frac{509}{780}$ & 2 & 1 & 2 &  $4\times(1,2)$ $(1,3)$ $3\times (1,4)$ $(2,5)$ $(6,13)$   \\
\hline
27 & 5 & $(27,28)$ & $(3,4,7,8,9,19)$ & $\frac{1}{19}(3,4,7)$ & $\frac{125}{168}$ & 2 & 1 & 2 &  $4\times(1,3)$ $4\times (1,4)$ $(3,7)$ $(3,8)$   \\
\hline
28 & 5 & $(24,28)$ & $(2,6,7,8,11,13)$ & $\frac{1}{11}(3,5,1)$ & $\frac{161}{195}$ & 3 & 1 & 2 &  $14\times(1,2)$ $(1,3)$ $(2,5)$ $(3,13)$   \\
\hline
29 & 5 & $(24,36)$ & $(2,3,7,8,17,18)$ & $\frac{1}{17}(3,8,1)$ & $\frac{125}{168}$ & 4 & 0 & 2 &  $6\times(1,2)$ $(1,3)$ $(3,7)$ $(3,8)$   \\
\hline
30 & 5 & $(24,34)$ & $(1,3,8,11,13,17)$ & $\frac{1}{13}(1,3,4)$ & $\frac{125}{132}$ & 5 & -1 & 2 &  $(1,3)$ $(1,4)$ $(2,11)$   \\
\hline
31 & 7 & $(39,42)$ & $(6,9,11,13,14,21)$ & $\frac{1}{11}(5,2,3)$ & $\frac{11}{45}$ & 1 & 1 & 5 &  $2\times (1,2)$ $5\times(1,3)$ $(2,5)$   $(1,9)$\\
\hline
32 & 7 & $(30,44)$ & $(4,5,6,11,19,22)$ & $\frac{1}{19}(4,5,3)$ & $\frac{343}{660}$ & 2 & 1 & 2 &  $5\times (1,2)$ $(1,3)$ $(1,4)$ $(2,5)$ $2\times (3,11)$  \\
\hline
33 & 7 & $(40,45)$ & $(1,8,9,15,20,25)$ & $\frac{1}{25}(1,8,9)$ & $\frac{343}{360}$ & 3 & 0 & 2 & $(1,3)$ $(1,4)$ $(2,5)$ $(1,8)$ $(2,9)$ \\
\hline
34 & 9 & $(30,54)$ & $(4,6,10,13,15,27)$ & $\frac{1}{13}(5,4,3)$ & $\frac{14}{15}$ & 3 & 1 & 2 & $13\times(1,2)$ $3\times(1,3)$ $2\times (1,4)$ $2\times (2,5)$  \\
\hline
35 & 3 & $33$ & $(2,3,5,7,13)$ & $\frac{1}{13}(2,3,5)$ $\frac{1}{7}(3,1,2)$ & $\frac{7}{30}$ & 2 & 0 & 4 & $3\times(1,2)$ $(1,3)$  $2\times (1,5)$  \\
\hline
36 & 1 & $(15,22)$ & $(2,3,4,5,11,11)$ & $\frac{1}{11}(2,3,5)$ $\frac{1}{11}(2,3,5)$ & $\frac{1}{60}$ & 1 & 1 & 3 & $7\times(1,2)$  $2\times (1,3)$ $(1,4)$  $2\times (2,5)$  \\
\hline
37 & 1 & $(8,10)$ & $(1,1,2,3,5,5)$ & $\frac{1}{5}(1,1,2)$ $\frac{1}{5}(1,1,2)$ & $\frac{1}{3}$ & 4 & -1 & 3 & $2\times(1,2)$  $(1,3)$  \\
\hline
38 & 1 & $(6,8)$ & $(1,1,1,2,4,4)$ & $\frac{1}{4}(1,1,1)$ $\frac{1}{4}(1,1,1)$ & $\frac{1}{1}$ & 7 & -2 & 3 & $2\times(1,2)$  \\
\hline
39 & 3 & $(20,26)$ & $(1,4,5,7,13,13)$ & $\frac{1}{13}(1,4,5)$ $\frac{1}{13}(1,4,5)$ & $\frac{27}{70}$ & 3 & 0 & 3 & $2\times(1,4)$ $2\times (2,5)$ $(3,7)$  \\
\hline
40 & 3 & $(12,28)$ & $(2,3,5,6,7,14)$ & $\frac{1}{7}(3,1,2)$ $\frac{1}{7}(3,1,2)$ & $\frac{7}{15}$ & 3 & 0 & 3 & $6\times(1,2)$ $4\times (1,3)$ $(2,5)$ \\
\hline
41 & 2 & $(10,12,14)$ & $(2,3,4,5,6,7,7)$ & $\frac{1}{7}(1,2,3)$ $\frac{1}{7}(1,2,3)$ & $\frac{1}{3}$ & 2 & 0 & 3 & $2\times(1,2)$ $4\times (1,3)$ \\
\hline
42 & 2 & $(10,12,12)$ & $(2,3,4,5,5,6,7)$ & $\frac{1}{7}(1,2,3)$  & $\frac{13}{30}$ & 2 & 0 & 2 & $(1,2)$ $(1,3)$ $2\times (2,5)$\\
\hline
43 & 3 & $(12,14,18)$ & $(3,4,5,6,7,7,9)$ & $\frac{1}{7}(1,2,3)$ $\frac{1}{7}(1,2,3)$  & $\frac{7}{15}$ & 2 & 0 & 3 & $2\times (1,2)$ $6\times (1,3)$ $(2,5)$\\
\hline

\end{longtable}
}

{\tiny
\begin{longtable}{|l|l|l|l|l|l|l|l| p{4.5cm} |}
	\caption{Minimal $3$-folds of general type, II}\label{tableAp}\\
		\hline
		No.&$\alpha$ & deg & weight & B-weight & $\Vol$&$P_2$ & $\chi$ &basket \\ \hline	
\endfirsthead
\multicolumn{5}{l}{{ {\bf \tablename\ \thetable{}} \textrm{-- continued from previous page}}} \\
\hline 
		No.&$\alpha$ & deg & weight & B-weight & $\Vol$&$P_2$ & $\chi$ &basket \\ \hline	
\endhead

 \multicolumn{5}{l}{{\textrm{Continued on next page}}} \\ \hline
\endfoot
\hline 
\endlastfoot
\hline
1 & 2 & $(20,21)$ & $(3,4,5,6,7,14)$ & $\frac{1}{14}(3,4,5)$ & $\frac{3}{35}$ & 1 & 1 & $4\times(1,3)$ $(2,5)$ $(3,7)$\\
\hline
2 & 2 & $(15,22)$ & $(2,3,4,5,10,11)$ & $\frac{1}{10}(3,4,1)$ & $\frac{2}{15}$ & 2 & 0 & $(1,3)$ $(2,5)$ \\
\hline
3 & 2 & $(15,21)$ & $(1,3,4,5,7,14)$ & $\frac{1}{14}(3,4,5)$ & $\frac{44}{105}$ & 3 & 0 & $(1,3)$ $(2,5)$ $(3,7)$ \\
\hline
4 & 2 & $(15,20)$ & $(1,3,4,5,6,14)$ & $\frac{1}{14}(3,4,5)$ & $\frac{7}{15}$ & 3 & 0 & $3\times(1,3)$ $(2,5)$ \\
\hline
5 & 2 & $(15,17)$ & $(1,3,4,5,7,10)$ & $\frac{1}{10}(1,3,4)$ & $\frac{44}{105}$ & 3 & 0 & $(1,3)$ $(2,5)$ $(3,7)$\\
\hline
6 & 2 & $(15,16)$ & $(1,3,4,5,6,10)$ & $\frac{1}{10}(1,3,4)$ & $\frac{7}{15}$ & 3 & 0 & $3\times(1,3)$ $(2,5)$ \\
\hline
7 & 2 & $(15,17)$ & $(1,3,3,4,5,14)$ & $\frac{1}{14}(3,4,5)$ & $\frac{4}{5}$ & 4 & 0 & $6\times(1,3)$ $(2,5)$ \\
\hline
8 & 2 & $(12,15)$ & $(1,2,4,5,6,7)$ & $\frac{1}{7}(1,2,3)$ & $\frac{5}{6}$ & 3 & -1 & $(1,2)$ $(1,3)$ \\
\hline
9 & 3 & $(25,28)$ & $(4,5,6,7,9,19)$ & $\frac{1}{19}(4,5,7)$ & $\frac{17}{140}$ & 1 & 1 & $2\times(1,2)$ $(1,4)$ $(2,5)$ $(2,7)$\\
\hline
10 & 3 & $(18,28)$ & $(4,6,6,7,9,11)$ & $\frac{1}{11}(5,2,3)$ & $\frac{2}{15}$ & 2 & 1 & $7\times(1,2)$ $(1,3)$ $(2,5)$\\
\hline
11 & 3 & $(12,25)$ & $(3,4,5,6,7,9)$ & $\frac{1}{7}(1,2,3)$ & $\frac{1}{3}$ & 2 & 0 & $2\times(1,2)$ $(1,3)$ \\
\hline
12 & 3 & $(16,30)$ & $(1,3,6,8,10,15)$ & $\frac{1}{5}(2,1,1)$ & $\frac{1}{2}$ & 4 & -1 & $3\times(1,2)$ \\
\hline
13 & 3 & $(12,26)$ & $(2,3,5,6,6,13)$ & $\frac{1}{5}(1,2,1)$ & $\frac{1}{2}$ & 4 & 0 & $9\times(1,2)$ \\
\hline
14 & 3 & $(14,24)$ & $(2,3,3,7,8,12)$ & $\frac{1}{4}(1,1,1)$ & $\frac{1}{2}$ & 4 & -1 & $3\times(1,2)$ \\
\hline
15 & 3 & $(12,20)$ & $(2,3,5,6,6,7)$ & $\frac{1}{7}(3,1,2)$ & $\frac{5}{6}$ & 4 & 0 & $7\times(1,2)$ $(1,3)$ \\
\hline
16 & 3 & $(16,18)$ & $(2,3,4,6,7,9)$ & $\frac{1}{7}(1,2,3)$ & $\frac{5}{6}$ & 4 & 0 & $13\times(1,2)$ $(1,3)$ \\
\hline
17 & 3 & $(18,22)$ & $(1,3,6,7,9,11)$ & $\frac{1}{7}(1,2,3)$ & $\frac{5}{6}$ & 4 & -1 & $(1,2)$ $(1,3)$ \\
\hline
18 & 3 & $(14,20)$ & $(2,2,3,5,9,10)$ & $\frac{1}{9}(2,3,1)$ & $\frac{9}{10}$ & 4 & 0 & $15\times(1,2)$ $(1,5)$ \\
\hline
19 & 4 & $(24,27)$ & $(2,5,8,9,11,12)$ & $\frac{1}{11}(2,5,3)$ & $\frac{13}{30}$ & 2 & 0 & $(1,2)$ $2\times(2,5)$ \\
\hline
20 & 4 & $(15,24)$ & $(3,4,5,7,8,8)$ & $\frac{1}{7}(1,3,2)$ & $\frac{5}{6}$ & 4 & 0 & $(1,2)$ $(1,3)$ \\
\hline
21 & 4 & $(12,27)$ & $(2,5,6,6,7,9)$ & $\frac{1}{7}(1,3,1)$ & $\frac{8}{15}$ & 3 & 0 & $3\times(1,3)$ $(1,5)$ \\
\hline
22 & 4 & $(25,27)$ & $(1,5,6,8,9,19)$ & $\frac{1}{19}(1,5,9)$ & $\frac{44}{45}$ & 4 & 0 & $(1,3)$ $(1,5)$ $(4,9)$ \\
\hline
23 & 4 & $(24,25)$ & $(1,5,6,8,9,16)$ & $\frac{1}{16}(1,5,6)$ & $\frac{44}{45}$ & 4 & 0 & $(1,3)$ $(1,5)$ $(4,9)$ \\
\hline
24 & 6 & $(25,36)$ & $(3,4,5,9,16,18)$ & $\frac{1}{16}(3,5,2)$ & $\frac{4}{5}$ & 4 & 0 & $(2,5)$  \\
\hline
25 & 9 & $70$ & $(4,6,7,9,35)$ & $\frac{1}{7}(2,3,1)$ $\frac{1}{7}(2,3,1)$ & $\frac{11}{12}$ & 4 & 0 & $7\times(1,2)$ $2\times(1,3)$  \\
\hline
26 & 2 & $(15,28)$ & $(1,3,4,5,14,14)$ & $\frac{1}{14}(3,4,5)$ $\frac{1}{14}(3,4,5)$ & $\frac{4}{15}$ & 3 & 0 & $2\times(1,3)$ $2\times(1,4)$ $2\times (2,5)$  \\
\hline
27 & 2 & $(15,20)$ & $(1,3,4,5,10,10)$ & $\frac{1}{10}(1,3,4)$ $\frac{1}{10}(1,3,4)$ & $\frac{4}{15}$ & 3 & 0 & $2\times(1,3)$ $2\times(1,4)$ $2\times (2,5)$  \\
\hline
28 & 3 & $(18,28)$ & $(3,4,6,7,9,14)$ & $\frac{1}{7}(1,2,3)$ $\frac{1}{7}(1,2,3)$ & $\frac{1}{6}$ & 2 & 0 & $5\times(1,2)$ $2\times(1,3)$  \\
\hline
29 & 3 & $(14,16)$ & $(2,3,3,4,7,8)$ & $\frac{1}{4}(1,1,1)$ $\frac{1}{4}(1,1,1)$ & $\frac{1}{1}$ & 5 & -1 & $6\times(1,2)$  \\
\hline
30 & 2 & $(12,14,15)$ & $(2,4,5,6,7,7,8)$ & $\frac{1}{7}(1,2,3)$ $\frac{1}{7}(1,2,3)$ & $\frac{1}{6}$ & 2 & 0 & $2\times(1,2)$ $2\times (1,3)$  \\
\hline
31 & 2 & $(11,12,14)$ & $(2,4,4,5,6,7,7)$ & $\frac{1}{7}(1,2,3)$ $\frac{1}{7}(1,2,3)$ & $\frac{4}{15}$ & 3 & 0 & $2\times(1,2)$ $2\times (1,3)$  \\
\hline
32 & 2 & $(10,12,14)$ & $(2,4,4,5,5,6,7)$ & $\frac{1}{5}(1,2,1)$ $\frac{1}{5}(1,2,1)$ & $\frac1{6}$ & 3 & 0 & $2\times(1,2)$  \\
\hline
33 & 2 & $(10,12,15)$ & $(2,3,4,5,6,7,8)$ & $\frac{1}{7}(1,2,3)$  & $\frac1{3}$ & 2 & 0 & $(1,2)$ $(1,3)$  \\
\hline
34 & 2 & $(10,11,12)$ & $(2,3,4,4,5,6,7)$ & $\frac{1}{7}(1,2,3)$  & $\frac1{2}$ & 3 & 0 & $(1,2)$ $(1,3)$  \\
\hline
35 & 2 & $(9,10,12)$ & $(2,2,3,4,5,6,7)$ & $\frac{1}{7}(1,2,3)$  & $\frac{5}{6}$ & 4 & -1 & $(1,2)$ $(1,3)$  \\
\hline
36 & 3 & $(10,12,18)$ & $(3,3,4,5,6,7,9)$ & $\frac{1}{7}(1,2,3)$  & $\frac{5}{6}$ & 4 & -1 & $(1,2)$ $(1,3)$  \\
\hline

\end{longtable}
}

We illustrate on how to do the manual verification for several typical examples.

\begin{exmp}[Table~\ref{tableA}, No.~1] Consider the general weighted complete intersection 
$$X=X_{20,21}\subset \mathbb{P}(3,4,5,7,8,13),$$
which is clearly well-formed and quasi-smooth by Definition~\ref{wellform} and Proposition~\ref{2.8}. One also knows that $\alpha = 1$, $p_g = 0$, $P_2=0$, and $\chi(\mathcal{O}_X) = 1$. The set of singularities of $X$ is
	$$\text{\rm Sing}(X) = \{\frac{1}{4}(3,3,1),\ \frac{1}{4}(3,3,1),\, \frac{1}{8}(3,7,5), \ Q=\frac{1}{13}(3,4,5)\},$$
	where the first $3$ singularities are terminal, while the last one is the  unique non-canonical singular point.
	
	It's easy to determine $\text{\rm Sing}(X)$ using Proposition~\ref{non iso can}. 
	Denote $P_0, \dots, P_5$ to be the vertices. As $3 | 21$, $4 | 20$, $5|20$, $7|21$  $P_i\not \in X$ for $i\in\{0,1,2,3\}$ while $P_{4}, P_5\in X$. For $P_4$, we have $8 | (20-4)$ and $8|(21-5)$ so $P_4$ is a singularity of type $\frac{1}{8}(3,7,5)$. Similarly we can determine the type of $P_5$, which is $Q=\frac{1}{13}(3,4,5)$. As $\gcd (4,8) =4$, there are $\lfloor \frac{4\times20}{4\times8}\rfloor =2$ singular points on $P_1P_4\cap X$, each of type $\frac{1}{4}(3,3,1)$. There are no other singular points on $X$ since the other weights are pairwisely coprime.

	For applying Theorem~\ref{nefness 2}, we take 
	$$(b_1,b_2,b_3,b_4,b_5,b_6) = (3,4,5,7,8,13),$$
	$$(e_1,e_2,e_3)=(3,4,5),$$
	$r=13$, and $k=2$. Conditions $(1),(2),(4)$ follow from direct computations (or Remark~\ref{nefrk}). Condition $(3)$ means to check that a general curve $C_{20,21}\subset\mathbb{P}(4,7,8,13)$ is irreducible, which follows from the fact that this curve is actually quasi-smooth. On the other hand, we can also argue that  $\alpha e_{j} = b_{j}(r-\sum_{i=1}^3e_i)$ for all $j\in\{1,2,3\}$ and $Z_{20,21}\subset \mathbb{P}(7,8,13)$ is a finite set.
	
	So by Theorem~\ref{nefness 2}, we can take a weighted blow-up $f:\widetilde{X}\to X$ at the point $Q$ with weight $(3,4,5)$ such that $K_{\widetilde{X}}$ is nef. On the exceptional divisor $E$ there are $3$ new singularities:
	$$\frac{1}{3}(2,1,2),\ \frac{1}{4}(3,3,1),\ \frac{1}{5}(3,4,2),$$ all of which are terminal. Hence $\widetilde{X}$ is a minimal $3$-fold and $\widehat{X}=\widetilde{X}$. Applying the volume formula for weighted blow-ups (cf. Proposition~\ref{wb}), we get $\Vol(\widehat{X})=K_{\widetilde{X}}^3 = \frac{1}{120}$.
	Since $\rho(X)=1$ by \cite[Theorem 3.2.4(i)]{WPS}, after one weighted blow-up the Picard number becomes $2$.
	 Finally, we collect the singularities of $\widehat{X}$ and obtain the Reid basket 
	$$B_{\widehat{X}} = \{ (1,3),\ 3\times(1,4),\ (2,5),\ (3,8)\}.$$\qed
\end{exmp}

\begin{exmp}[Table~\ref{tableA}, No.~31] Consider the general weighted complete intersection
	$$X=X_{39,42}\subset \mathbb{P}(6,9,11,13,14,21), $$
which is well-formed and quasi-smooth by Definition~\ref{wellform} and Proposition~\ref{2.8}.
	It is clear that $\alpha = 7$, $P_2 = 1$ and $\chi(\mathcal{O}_X) = 1$. Moreover it's easy to determine 
	$$\text{\rm Sing}(X) = \{\frac{1}{2}(1,1,1),\ 4\times \frac{1}{3}(2,1,2),\ \frac{1}{7}(1,5,1), \frac{1}{9}(1,1,8),\ Q = \frac{1}{11}(5,2,3)\}.$$ Note that $Q$ is the unique non-canonical singular point, while $\frac{1}{7}(1,5,1)$ is an isolated canonical singular point.

	Here we illustrate how to use Proposition~\ref{non iso can} to determine $\text{\rm Sing}(X)$. 
	Denote $P_0, \dots, P_5$ to be the vertices. Note that  $P_1,P_2  \in X$ while $P_{0}, P_3, P_4, P_5\not\in X$. For $P_2$, we have $11 | (39-6)$, $11|(42-9)$ so $P_4$ is a singularity of type $\frac{1}{11}(13,14,21)=\frac{1}{11}(2, 3, 10)=\frac{1}{11}(5, 2, 3)$. Similarly $P_1$ is a singularity of type $\frac{1}{9}(1,1,8)$. For edges, there are singular points on $P_0P_4\cap X$ and $P_4P_5\cap X$. For $P_0P_4\cap X$, there is $\lfloor \frac{2\times 42}{6\times 14}\rfloor=1$ singular point of type $\frac{1}{2}(11,13,21)=\frac{1}{2}(1,1,1)$;  for $P_4P_5\cap X$, there is $\lfloor \frac{7\times 42}{14\times 21}\rfloor=1$ singular point of type $\frac{1}{7}(6,9,13)=\frac{1}{7}(1,5,1)$. On the only singular face $P_0P_1P_5$, as $\gcd (6,9,21)=3$, the number of singular points on this face is obtained by $N= \frac{3\times 39\times42}{6\times9\times  21} - \frac{3}{9} = 4$, each of type $\frac{1}{3}(11,13,14)=\frac{1}{3}(2,1,2)$. There are no other singular points.

	Take $ (b_1,...,b_6) = (13,14,21,6,9,11)$, $(e_1,e_2,e_3) = (5,2,3)$, 
	$r = 11$, and $k = 3$. One can check that the conditions of Theorem~\ref{nefness 2} are satisfied and hence, after a weighted blow-up with weight $(5,2,3)$ at $Q$, we get
	$f:\widetilde{X}\rightarrow X$
	so that $K_{\widetilde{X}}$ is nef and $K^3_{\widetilde{X}}=\frac{11}{45}$.	By Proposition~\ref{wb}, we see that
	$$\text{\rm Sing}(\widetilde{X}) = \{2\times \frac{1}{2}(1,1,1),\ 5\times \frac{1}{3}(2,1,2),\ \frac{1}{5}(1,3,4),\frac{1}{7}(1,5,1),\  
	\frac{1}{9}(1,1,8)\}.$$
 These are all canonical singularities, among which only one is non-terminal: $\frac{1}{7}(1,5,1)$.
	By Lemma~\ref{can sing res}, there exists a terminalization $g:\widehat{X}\rightarrow\widetilde{X}$
	such that $B_{\widehat{X}}=B_{\widetilde{X}}
	= \{2\times (1,2),\ 5\times (1,3),\ (2,5),\ (1,9)\} $
	and $\rho(\widehat{X}) = \rho(\widetilde{X})+ 3 = 5$.
Since $g$ is crepant, $K_{\widehat{X}}^3=K^3_{\widetilde{X}}=\frac{11}{45}$. 
\qed
\end{exmp}

 \begin{rem} An interesting phenomenon is that some of our examples have the same deformation invariants ( $\Vol(\widehat{X})$, $P_2$, and basket) as some known examples found by Iano-Fletcher \cite{Fle00} or Chen, Jiang and Li's previous paper\cite{CJL24}. However since birationally equivalent minimal varieties must have the same Picard number (\cite[Theorem 3.52(2)]{K-M}), so most of our examples are birationally different from their examples. Those examples sharing the same deformation invariants with previous examples may actually be mutually deformation equivalent.
\end{rem}

\subsection{Examples of minimal $3$-folds of general type near the Noether line}

 The geography problem of $3$-folds of general type is a very interesting topic in explicit birational geometry. Among which the ``Noether inequality in dimension $3$'' is one of the most challenging problems. This inequality was proved by Chen, Chen and Jiang (\cite{Noether, Noether_Add}) for the case where $p_g\leq 4$ or $p_g\geq 11$, and the remaining cases where $5\leq p_g \leq 10$ have recently been proved by Chen, Hu, and Jiang (\cite[Theorem 1.4]{CHJ24}):

\begin{thm}
    \label{Noether} Any minimal projective $3$-fold $X$ of general type satisfies the inequality
$$K_X^3\geq \frac{4}{3}p_g(X)-\frac{10}{3}.$$
\end{thm}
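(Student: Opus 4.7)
The plan is to adapt the classical strategy behind the surface Noether inequality $K_S^2 \geq 2 p_g(S) - 4$ to the three-dimensional setting. First I would pass to a smooth birational model $\pi \colon Z \to X$ that resolves the base locus of $|K_X|$, write $|\pi^* K_X| = |M| + F_\mathrm{fix}$ with moving part $|M|$ and fixed part $F_\mathrm{fix}$, and let $\varphi \colon Z \to W \subseteq \mathbb{P}^{p_g(X)-1}$ be the morphism defined by $|M|$. The proof then splits into three cases according to $d := \dim W \in \{1,2,3\}$, following the familiar trichotomy (generically finite, fibered over a surface, fibered over a curve).

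In the generically finite case $d = 3$, the image $W$ is non-degenerate in $\mathbb{P}^{p_g-1}$, so $\deg W \geq p_g - 3$, and hence $M^3 \geq \deg\varphi \cdot \deg W \geq p_g - 3$. A standard moving-divisor refinement then improves this to something like $K_X^3 \geq 2(p_g - 3)$, well within the required bound. In the case $d = 2$, I would Stein-factor $\varphi$ as $Z \to T \to W$ with general fibre a connected curve, intersect with a general $S' \in |M|$ to reduce to a surface fibration, and apply the surface Noether inequality on $S'$ after comparing $K_{S'}$ with $\pi^*K_X|_{S'}$ via adjunction.

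The main effort goes into the fibered case $d = 1$. Here $\varphi$ factors through a fibration $f \colon Z \to B$ over a smooth curve $B$, the general fibre $F_0$ is a smooth irreducible surface, and $M \sim a F_0$ for some integer $a \geq p_g - 1$ if $g(B) = 0$, respectively $a \geq p_g$ if $g(B) \geq 1$. One then estimates
$$
K_X^3 \,\geq\, (\pi^*K_X)^2 \cdot M \,=\, a\,(\pi^*K_X)^2 \cdot F_0 \,\geq\, (p_g - 1)\, K_{F_0}^2,
$$
where the last step uses adjunction to compare $(\pi^*K_X)|_{F_0}$ with $K_{F_0}$. Applying the surface Noether inequality on $F_0$ gives $K_{F_0}^2 \geq 2 p_g(F_0) - 4$, reducing everything to a lower bound on $p_g(F_0)$ in terms of $p_g(X)$.

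The hard part is controlling $p_g(F_0)$ when it is small, and controlling the residual correction between $(\pi^*K_X)|_{F_0}$ and $K_{F_0}$. I would handle this by combining (i) Viehweg–Fujita weak positivity of $f_*\omega_{Z/B}$, which morally forces $p_g(F_0) \gtrapprox p_g(X) - g(B)$; (ii) an upper bound on $g(B)$ obtained from $h^0(B, f_*\mathcal{O}_Z(\pi^*K_X)) \leq p_g(X)$; and (iii) a finite singularity-by-singularity analysis via Reid's plurigenus formula (Lemma~\ref{pluri formula}) to rule out the remaining baskets of terminal singularities on $X$. The middle range $5 \leq p_g(X) \leq 10$ resolved in \cite{CHJ24} is precisely where the numerical bounds from (i)–(ii) do not close on their own, and the basket bookkeeping in (iii) is, I expect, the main technical obstacle: one must individually exclude each admissible configuration of terminal cyclic quotient singularities compatible with $p_g$ in the critical window, and this is where the proof ceases to be formal and becomes a case analysis.
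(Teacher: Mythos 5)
This statement is not proved in the paper at all: it is quoted as a known external result, attributed to Chen--Chen--Jiang \cite{Noether, Noether_Add} for $p_g\leq 4$ or $p_g\geq 11$ and to Chen--Hu--Jiang \cite{CHJ24} for $5\leq p_g\leq 10$. So there is no internal proof to compare your attempt against, and no proof sketch of any length could reasonably be expected to substitute for what amounts to two long research papers plus an addendum.

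Judged on its own terms, your outline correctly identifies the broad architecture of the published arguments (trichotomy on the dimension of the canonical image, reduction to the fibered case over a curve, weak positivity of $f_*\omega_{Z/B}$, Reid's plurigenus formula for the basket analysis), but it is a research program rather than a proof, and you say as much yourself. Two concrete gaps are worth flagging. First, in the case $d=1$ your chain of inequalities ends with $K_X^3\geq (p_g-1)K_{F_0}^2$, which cannot by itself produce the coefficient $\tfrac{4}{3}$: the extremal case is a fibration by $(1,2)$-surfaces ($K^2=1$, $p_g=2$), for which the naive estimate only yields $K_X^3\gtrsim p_g-1$, and the factor $\tfrac{4}{3}$ emerges from a much more delicate comparison between $(\pi^*K_X)|_{F_0}$ and the canonical divisor of the \emph{minimal model} of $F_0$, together with a careful treatment of the vertical and horizontal parts of the fixed divisor. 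Second, the adjunction step "$(\pi^*K_X)^2\cdot F_0\geq K_{F_0}^2$" is false as stated, since $K_{F_0}=(\pi^*K_X)|_{F_0}+E|_{F_0}$ with $E$ the (effective) discrepancy divisor, so the inequality a priori goes the wrong way; the correct comparison is with $(\sigma^*K_{F_0'})^2$ for the minimal model $F_0'$ of the fibre, and making that rigorous is one of the central technical points of \cite{Noether}. Neither gap is fatal to the strategy --- it is the strategy actually used in the literature --- but both sit exactly where the real mathematical content lies.
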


We say $K^3= \frac{4}{3}p_g-\frac{10}{3}$ is the first Noether line. In fact, Hu and Zhang (\cite{HZ25}) proved that there exists two other Noether lines and they have the following interesting characterizations.

\begin{thm}\label{Noether lines}\cite[Theorem 1.5]{HZ25}
    Let $X$ be a minimal $3$-fold of general type with $p_g(X)\geq 11$.
    \begin{itemize}
        \item If $X$ is on the first Noether line $K^3= \frac{4}{3}p_g-\frac{10}{3}$, then $p_g(X)\equiv 1 \mod 3$.
        \item If $X$ is strictly above the first Noether line, we have the inequality: $K^3\geq \frac{4}{3}p_g-\frac{19}{6}$, and if $X$ is on the second Noether line $K^3 =\frac{4}{3}p_g-\frac{19}{6}$, then $p_g(X)\equiv 2 \mod 3$.
        \item If $X$ is strictly above the second Noether line, we have the inequality: $K^3\geq \frac{4}{3}p_g-3$, and if $X$ is on the third Noether line $K^3 =\frac{4}{3}p_g-3$, then $p_g(X)\equiv 0 \mod 3$.
    \end{itemize}
\end{thm}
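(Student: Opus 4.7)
The plan is to run a Horikawa-style analysis on the canonical map $\varphi_{|K_X|}: X \dashrightarrow \Sigma \subset \bbP^{p_g-1}$, stratify by $\dim \Sigma \in \{1,2,3\}$, and combine the resulting effective bounds on $K_X^3$ with Reid's plurigenus formula (Lemma~\ref{pluri formula}) to extract both the slope inequalities and the modular conditions on $p_g$. First I would dispose of the easy strata. When $\dim \Sigma = 3$ the canonical map is generically finite, and a Kobayashi-style inequality of the form $K_X^3 \geq 2p_g - 6$ puts $X$ well above the third Noether line as soon as $p_g \geq 11$. When $\dim \Sigma = 2$ a general member of $|K_X|$ induces a fibration structure over the canonical image, and an argument analogous to Chen-Chen-Jiang produces $K_X^3 \geq \tfrac{3}{2}p_g - C$ for some small constant $C$, again above all three lines for $p_g \geq 11$. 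So all extremal examples live in the pencil case $\dim \Sigma = 1$.

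In the pencil case, after resolution and flattening, $X$ admits a fibration $f : X' \to C$ onto a smooth curve with general fiber $F$ a smooth minimal surface of general type. Writing the $|K_X|$-mobile part as $aF + M$ and applying a Harder-Narasimhan decomposition to $f_* \omega_{X'/C}$, one obtains an inequality
\[
K_X^3 \geq (p_g - 1)\bigl(K_F^2 + \epsilon(F)\bigr),
\]
where $\epsilon(F)$ is an explicit correction controlled by the degree and stability of $f_* \omega_{X'/C}$ on $C$. Horikawa's classification of surfaces of general type gives a trichotomy for the generic fiber: either $K_F^2 = 2p_g(F) - 4$ (the classical surface Noether line), or $K_F^2 = 2p_g(F) - 3$, or $K_F^2 \geq 2p_g(F) - 2$. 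Pulling these three regimes back through the fibration and tracking $\epsilon(F)$ honestly yields exactly the three slopes $\tfrac{4}{3}$ with intercepts $-\tfrac{10}{3}$, $-\tfrac{19}{6}$, $-3$, establishing the three inequality statements.

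To pin down the modular conditions, I would apply Reid's plurigenus formula together with the constraint that the extremal geometry forces a very specific Reid basket on (a terminalization of) $X$. On the first Noether line $K_X^3 = \tfrac{4p_g - 10}{3}$, substituting into
\[
\chi(\OO_X(2K_X)) = \tfrac{1}{2}K_X^3 - 3\chi(\OO_X) + l(2)
\]
and using $P_2 \in \bZ$, $\chi(\OO_X) \in \bZ$, together with the explicit list of baskets that can arise when the generic fiber $F$ is a Horikawa first-Noether surface, forces $3 \mid (p_g - 1)$. The analogous computation on the second and third lines, using the next two strata of Horikawa surfaces and the correspondingly shifted basket contributions $l(n)$, produces the residues $p_g \equiv 2$ and $p_g \equiv 0 \pmod 3$.

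The hard part is the error term $\epsilon(F)$ in the fibration inequality: carrying out an honest comparison of $K_X|_F$ with $K_F$ and controlling the sub-bundle flag of $f_* \omega_{X'/C}$ on $C$ is a delicate Clifford/Fujita-type argument, and it is here that the precise intercepts $-\tfrac{10}{3}$, $-\tfrac{19}{6}$, $-3$ are determined rather than loose approximations. A secondary technical hurdle is ensuring that the restriction of $|K_X|$ to $F$ actually realizes $|K_F|$ (or a controlled sub-series), which is typically where the hypothesis $p_g(X) \geq 11$ enters decisively. Once these structural inputs are in place, the modular conclusions drop out from bookkeeping with Lemma~\ref{pluri formula}.
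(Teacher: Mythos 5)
This statement is quoted from \cite[Theorem 1.5]{HZ25} and is not proved anywhere in the paper under review; it is imported as an external input used to interpret the examples in Tables~\ref{tableC} and~\ref{tableC+}. So there is no internal proof to compare against, and your task here was really to reprove a substantial theorem of Hu and Zhang.

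As a proof, your proposal has genuine gaps. First, it is a plan rather than an argument: the two quantities that actually determine the theorem --- the correction term $\epsilon(F)$ in the fibration inequality and the exact intercepts $-\tfrac{10}{3}$, $-\tfrac{19}{6}$, $-3$ --- are exactly the things you defer (``tracking $\epsilon(F)$ honestly yields exactly the three slopes''), and the congruences are asserted to ``drop out from bookkeeping'' with no computation. Second, the mechanism you propose for the trichotomy is not the right one. For minimal $3$-folds of general type with $p_g\geq 11$ that are on or near the Noether line, the canonical image is a curve and the general fiber $F$ of the induced fibration is a $(1,2)$-surface, i.e.\ $p_g(F)=2$ and $K_F^2=1$; the three $3$-fold Noether lines do \emph{not} arise from the general fiber climbing through the strata $K_F^2=2p_g(F)-4$, $2p_g(F)-3$, $\geq 2p_g(F)-2$ of surface geography, but from a finer analysis of the relative canonical model of the $(1,2)$-surface fibration over $\mathbb{P}^1$ and the splitting type of $f_*\omega_{X}$; it is this splitting that forces $p_g \bmod 3$ to match the line. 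Your Reid-basket argument via Lemma~\ref{pluri formula} also cannot work as stated: substituting $K_X^3=\tfrac{4}{3}p_g-\tfrac{10}{3}$ into the $P_2$ formula gives an integrality constraint involving the unknown term $l(2)$, which ranges over all possible baskets, so without first pinning down the basket (which again requires the fibration structure) no congruence on $p_g$ follows. If you want to reconstruct the actual proof, the reference to consult is \cite{HZ25} together with \cite{CHJ24}.
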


The effectivity of Theorem~\ref{nefness 2} and Theorem~\ref{nefness 3} make it possible for us to search those concrete $3$-folds near the three Noether lines. We provide here several new examples in Tables~\ref{tableC} and~\ref{tableC+}. It's interesting to note that although most of the examples below are birationally different than those constructed by Chen, Jiang and Li \cite{CJL24} (since they have different Picard numbers), many of them share the same deformation invariants with some of their previous examples, and those exmaples may indeed be deformation equivalent.

The description of contents of Tables~\ref{tableC} is similar to that of Table~\ref{tableA}, except for some examples with non-isolated canonical singularities, the computation of the Picard number is simply omitted. Here the last column is the distance $\Delta$ to the first Noether line, that is, $\Delta=\Vol(\widehat{X})-\frac{4}{3}p_g(\widehat{X})+\frac{10}{3}$.

 {\tiny
\begin{longtable}{|l|l|l|l|l|l|l|l|l| l |l|}
 			\caption{ Minimal $3$-folds of general type near the Noether line, I}\label{tableC}\\
		\hline
			No.&$\alpha$ & deg & weight & B-weight & $\Vol$&$P_2$ & $p_g$&$\rho$ &basket & $\Delta$ \\ \hline	
\endfirsthead
\multicolumn{5}{l}{{ {\bf \tablename\ \thetable{}} \textrm{-- continued from previous page}}} \\
\hline 
			No.&$\alpha$ & deg & weight & B-weight & $\Vol$&$P_2$ & $p_g$&$\rho$ &basket & $\Delta$ \\ \hline	
\endhead

 \multicolumn{4}{l}{{\textrm{Continued on next page}}} \\ \hline
\endfoot
\hline 
\endlastfoot
\hline
1 & 2 & $(8,12)$ & $(1,1,2,3,5,6)$ & $\frac{1}{5}(1,1,1)$ & $\frac{8}{3}$ & $11$ & $4$ & $2$ & $2\times (1,3)$ & $\frac{2}{3}$ \\
\hline
2 & 3 & $(12,28)$ & $(1,1,4,6,11,14)$ & $\frac{1}{11}(1,4,3)$ & $\frac{9}{4}$ & $11$ & $4$ & $2$ & $2\times (1,2)$ $(1,3)$ $(1,4)$ & $\frac{1}{4}$ \\
\hline
3 & 3 & $(12,22)$ & $(1,1,4,6,8,11)$ & $\frac{1}{8}(1,1,3)$ & $\frac{9}{4}$ & $11$ & $4$ & $2$ & $2\times (1,2)$ $(1,3)$ $(1,4)$ & $\frac{1}{4}$ \\
\hline
4 & 3 & $(13,18)$ & $(1,1,4,6,7,9)$ & $\frac{1}{7}(1,1,2)$ & $\frac{9}{4}$ & $11$ & $4$ & $2$ & $2\times (1,2)$ $(1,3)$ $(1,4)$ & $\frac{1}{4}$ \\
\hline
5 & 2 & $(8,10)$ & $(1,1,1,3,5,5)$ & $\frac{1}{5}(1,1,1)$ $\frac{1}{5}(1,1,1)$ & $\frac{16}{3}$ & $18$ & $6$ & $2$ & $(1,3)$ & $\frac{2}{3}$ \\
\hline
6 & 3 & $21$ & $(1,1,3,5,8)$ & $\frac{1}{5}(2,1,1)$ $\frac{1}{8}(1,1,3)$ & $\frac{7}{2}$ & $14$ & $5$ & $4$ & $(1,2)$  & $\frac{1}{6}$\\
\hline
7 & 7 & $45$ & $(1,2,7,11,17)$ & $\frac{1}{11}(4,3,1)$ $\frac{1}{17}(1,2,7)$ & $\frac{17}{4}$ & $15$ & $5$ & $10$ & $2\times(1,2)$ $(1,4)$ & $\frac{11}{12}$\\
\hline
8 & 5 & $32$ & $(1,1,5,8,12)$ & $\frac{1}{4}(1,1,1)$ $\frac{1}{12}(1,1,5)$ & $6$ & $21$ & $7$ & $7$ &  & $0$\\
\hline
9 & 5 & $(20,24)$ & $(1,1,5,8,12,12)$ & $\frac{1}{4}(1,1,1)$ $\frac{1}{12}(1,1,5)$ $\frac{1}{12}(1,1,5)$ & $6$ & $21$ & $7$ & $8$ &  & $0$\\
\hline
10 & $2$ & $(9,12)$ & $(1,1,2,4,5,6)$ & $\frac{1}{5}(1,1,1)$ & $2$ & $10$ & $4$ & &  & $0$ \\
\hline
11 & $2$ & $(7,12)$ & $(1,1,2,2,5,6)$ & $\frac{1}{5}(1,1,1)$ & $4$ & $14$ & $5$ & &  & $\frac{2}{3}$ \\
\hline
12 & $6$ & $37$ & $(1,1,6,9,14)$ & $\frac{1}{9}(2,3,1)$ $\frac{1}{14}(1,1,6)$ & $\frac{15}{2}$ & $25$ & $8$ & &  $(1,2)$ & $\frac{1}{6}$ \\
\hline

\end{longtable}
}

We also use Theorem \ref{nefness 4} to find several interesting examples near the Noether lines. The following are $4$ new examples obtained in this way, which has the same deformation invariants as some of the examples found by Chen, Jiang and Li.

 {\tiny
\begin{longtable}{|l|l|l|l|l|l|l|l|l| l |l|}
 			\caption{ Minimal $3$-folds of general type near the Noether line, II}\label{tableC+}\\
		\hline
		No.&$\alpha$ & deg& weight & B-weight & $\Vol$&$P_2$ & $p_g$ & $\rho$ &basket & $\Delta$\\ \hline
\endfirsthead
\multicolumn{10}{l}{{ {\bf \tablename\ \thetable{}} \textrm{-- continued from previous page}}} \\
\hline 
		No.&$\alpha$ & deg& weight & B-weight & $\Vol$&$P_2$ & $p_g$ &basket & $\Delta$ \\ \hline
\endhead

 \multicolumn{10}{l}{{\textrm{Continued on next page}}} \\ \hline
\endfoot
\hline 
\endlastfoot
\hline
1 & $3$ & $25$ & $(1,1,3,5,12)$ & $\frac{1}{12}(1,3,5)$ / $\frac{1}{5}(2,1,1)$ & $\frac{7}{2}$ & $14$ & $5$ & &  $(1,2)$ & $\frac{1}{6}$ \\
\hline
2 & $4$ & $35$ & $(1,1,5,7,17)$ & $\frac{1}{17}(1,5,7)$ / $\frac{1}{7}(2,3,1)$ & $\frac{109}{30}$ & $15$ & $5$ & 3 &  $(1,2)$ $(1,3)$ $(2,5)$& $\frac{3}{10}$ \\
\hline
3 & $5$ & $(16,30)$ & $(1,1,5,8,11,15)$ & $\frac{1}{11}(1,1,4)$ / $\frac{1}{4}(1,1,1)$ & $6$ & $21$ & $7$ & 7 &  & $0$ \\
\hline
4 & $6$ & $(20,36)$ & $(1,1,7,10,13,18)$ & $\frac{1}{13}(1,1,5)$ / $\frac{1}{5}(1,1,2)$ & $\frac{85}{14}$ & $22$ & $7$ & 3 &  $(1,2)$  $(2,7)$& $\frac{1}{14}$ \\
\hline

\end{longtable}
}

\begin{rem}\label{remark noether line}
\begin{enumerate}
\item The minimal $3$-folds $\widehat{X}_{32}$ $\widehat{X}_{20,24}$ and $\widehat{X}_{16,30}$ corresponding to Table~\ref{tableC}, No.~8  No.~9 and Table~\ref{tableC+}, No.~3 are new examples on the first Noether line. While the minimal $3$-folds $\widehat{X}_{21}$ $\widehat{X}_{37}$ and $\widehat{X}_{25}$ corresponding to Table~\ref{tableC}, No.~6 and No.~12 and Table~\ref{tableC+}, No.~1 are new examples on the second Noether line. In particular, $\widehat{X}_{37}$ is a completely new example on the second Noether line for $p_g =8$. Recently Hu and Zhang \cite{HZ25} use another construction to obtain various minimal $3$-folds on the three Noether lines, where their construction requires $p_g(X)\geq 11$.

\item During the search we also found an example that could be found using only Chen, Jiang and Li's original methods. That is, the weighted blow up $\widehat{X}_{106}$ of $X_{106}\subset \mathbb{P}(1,1,15,21,53)$ with B-weight $\frac{1}{21}(2,9,1)$. This example has $p_g=17$ and $\mathrm{Vol}(\widehat{X}_{106})=\frac{39}{2}$, which is a new example precisely on the second Noether line.

\item Our examples, together with Chen, Jiang and Li's previous examples, show that the assumption $p_g(X)\geq 11$ in Theorem \ref{Noether lines} is indeed optimal, as we have examples with canonical volume between two Noether lines for $p_g=5,7,8,10$. See for example our Table~\ref{tableC+}, No.~2 and No.~4 and \cite[Table 10, No.3, No.8, No.9 and No.10]{CJL24}.

\end{enumerate}
\end{rem}

All items in Table~\ref{tableC} and Table~\ref{tableC+} can be manually verified similar to previous cases.
We illustrate the explicit computations for two very interesting examples.

\begin{exmp}[Table~\ref{tableC}, No.~9]\label{ex cj3} Consider the general complete intersection
$$X=X_{20,24}\subset \mathbb{P}(1,1,5,8,12,12)$$
which is verified to be well-formed and quasi-smooth. It is also clear that $\alpha = 5$, $p_g = 7$, $P_2=21$. The set of singularities of $X$ is
	$$\text{\rm Sing}(X) = \{Q_1=\frac{1}{4}(1,1,1),\ Q_2=\frac{1}{12}(1,1,5),\ Q_3=\frac{1}{12}(1,1,5)\},$$
	where $Q_1$ $Q_2$ and $Q_3$ are all non-canonical, but they are all contained in a single 2-dimensional stratum $\Pi = (x_0=x_1=x_2=0)$.
	
	We are in the situation of \textbf{Case 2} of Theorem~\ref{nefness 3}. To check the conditions, we take 
	$$(b_1,b_2,b_3,b_4,b_5,b_6) = (1,1,5,8,12,12),$$
	$$(e_{1,1},e_{2,1},e_{3,1})=(1,1,1),$$ $$(e_{1,2},e_{2,2},e_{3,2})=(1,1,5),$$ $$(e_{1,3},e_{2,3},e_{3,3})=(1,1,5),$$
	$r_1=4,r_2=r_3=12$, and $k=3$. Conditions $(1),(2),(4)$ follow from direct computations. Condition $(3)$ means that the general curve $C_{20,24}\subset\mathbb{P}(5,8,12,12)$ should be irreducible, which is true since this curve is actually quasi-smooth.
	
	So by Theorem~\ref{nefness 3}, we can take a weighted blow-up $f:\widetilde{X}\to X$ at those points $Q_i$ with corresponding weights such that $K_{\widetilde{X}}$ is nef. On the exceptional divisors $E_i$ there are $2$ new singular points of type $\frac{1}{5}(1,1,3),$ which is canonical but not terminal.
	Hence by Lemma~\ref{can sing res}, there exists a terminalization $g:\widehat{X}\rightarrow\widetilde{X}$, where $\widehat{X}$ is a minimal $3$-fold. Applying the volume formula for weighted blow-ups (cf. Proposition~\ref{wb}), we get 
	$K_{\widehat{X}}^3=K_{\widetilde{X}}^3 = 6$.
	Since $\rho(X)=1$ by \cite[Theorem 3.2.4(i)]{WPS}, $\rho(\widehat{X})=\rho(\widetilde{X})+2\times 2 =1 + 3 +4 =8$. 
	 Finally, we can easily see that $\widehat{X}$ is actually non-singular and its Reid basket is empty.
One interesting point of this example is that this example is a new example on the first Noether line for $p_g=7$, with the Picard number equals $8$, which is larger than previous constructed examples.
\end{exmp}

\begin{exmp}[Table~\ref{tableC}, No.~12] Consider the general hypersurface
	$$X=X_{37}\subset \mathbb{P}(1,1,6,9,14), $$
which is well-formed and quasi-smooth by Definition~\ref{wellform} and Proposition~\ref{2.8}.
	Denote $P_0, \dots, P_4$ to be the vertices. It is clear that $\alpha = 6$, $P_2 = 25$ and $p_g(X) = 8$. Moreover $X$ has two non-canonical singular points $Q_1 = P_3= \frac{1}{9}(1,6,14)=\frac{1}{9}(2,3,1)$ and $Q_2=P_4 = \frac{1}{14}(1,1,6)$. It also has non-isolated canonical singularities along the line $P_2P_4$ and a canonical singular point at $P_2 = \frac{1}{6}(1,3,2)$.

	Take $ (b_1,...,b_5) = (1,6,14,1,9)$, $(e_1,e_2,e_3) = (2,3,1)$, $(f_1,f_2,f_3) = (1,6,1)$ 
	$r_1 = 9$, and $r_2 = 14$. We are in the \textbf{Case 1} of Theorem~\ref{nefness 3}. Let $\Pi_1 =(x_1=x_2=x_3=0)$ and $\Pi_2 =(x_1=x_2=x_4=0)$. It's clear that $Q_1\in \Pi_1 $ and $Q_2\in \Pi_2 $. One can check that the conditions of Theorem~\ref{nefness 3} are satisfied and hence, after a weighted blow-up at $Q_1$ and $Q_2$ with corresponding weights, we get
	$f:\widetilde{X}\rightarrow X$
	so that $K_{\widetilde{X}}$ is nef and $K^3_{\widetilde{X}}=\frac{15}{2}$.	On the exceptional divisors, $3$ new singular points appear, with one $\frac{1}{2}(1,1,1)$ is terminal, and the other two $\frac{1}{3}(1,0,2)$, $\frac{1}{6}(1,4,1)$ are canonical but not terminal. We can still take a terminalization
    $g:\widehat{X}\rightarrow\widetilde{X}$
	such that $$B_{\widehat{X}}=B_{\widetilde{X}}
	= \{(1,2)\} $$. However since $\widetilde{X}$ has non-isolated singularities, we omitted the computation of $\rho (\widehat{X})$. This example is interesting because it is a completely new example on the second Noether line with $p_g(\widehat{X})=8$, thus attaining minimal volume.

\qed
\end{exmp}

\subsection{Examples of minimal $3$-folds of Kodaira dimension $2$}\

Another interesting consequence of Theorem~\ref{nefness 2} is that we can construct infinite series of minimal $3$-folds of Kodaira dimension $2$. We mainly focus on the general construction obtained by blowing up a single point in a codimension $2$ weighted complete intersection, but we are aware that such constructions will also appear in the more general settings using Theorem~\ref{nefness 3}.

\begin{thm}\label{construction kod 2}
Suppose $X_{2r+4,2r+2}\subset \mathbb{P}(a, b, 4, r, r+1, r+2)$ is a $3$-dimensional well-formed quasi-smooth general complete intersection of bi-degrees $(2r+4,2r+2)$ with $\alpha=r-a-b-1>0$ where $a\leq b$ and $\gcd(a,b)=1$, but $a,b,4$ are not necessarily ordered by size. Denote by $x_1$, $x_2$, $x_3$, $x_4$, $x_5$,$x_6$ the homogenous coordinates of $\mathbb{P}(a, b, 4, r, r+1, r+2)$. Suppose that $X$ has a cyclic quotient singularity at the point $Q=(x_1=x_2=x_{5}=0)$ of type $\frac{1}{r}(a,b,1)$ which is non-canonical by assumption, and $x_1$, $x_2$, $x_3$ are the local coordinates of $Q$ corresponding to the weights $\frac{a}{r}$, $\frac{b}{r}$, $\frac{1}{r}$ respectively. Let $\pi: Y\to X$ be the weighted blow-up at $Q$ with weight $(a,b,1)$.
Then $K_Y$ is nef and $\nu(Y)=2$. 
\end{thm}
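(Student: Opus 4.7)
The plan is to apply Theorem~\ref{nefness 2} directly and then combine its output $\nu(Y)\geq 2$ with an explicit computation $K_Y^3=0$ to pin down $\nu(Y)=2$. First I would align the indexing with the hypotheses of Theorem~\ref{nefness 2} by interchanging the labels $x_3\leftrightarrow x_5$, so that the first three coordinates become the local coordinates at $Q$; the reordered weight sequence is $(b_1,\ldots,b_6)=(a,b,r+1,4,r,r+2)$, and the blow-up data are $(e_1,e_2,e_3)=(a,b,1)$. Since $\alpha=r-a-b-1=r-\sum_i e_i$ while $b_1=a=e_1$ and $b_2=b=e_2$, we are placed exactly in the degenerate situation of Remark~\ref{nefrk} with $k=3$. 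Consequently condition~(1) of Theorem~\ref{nefness 2} is automatic, condition~(4) reduces to $\gcd(a,b)=1$ which is given, and condition~(2) is equivalent to $K_Y^3\geq 0$. Proposition~\ref{wb} then gives
\[
K_X^3=\frac{\alpha^{3}(2r+4)(2r+2)}{ab\cdot 4\cdot r(r+1)(r+2)}=\frac{\alpha^{3}}{abr},\qquad
K_Y^3=K_X^3-\frac{\alpha^{3}}{rab}=0,
\]
so condition~(2) holds with equality and $K_Y^3=0$ is pinned down simultaneously.

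The only remaining item is condition~(3): a general bi-degree $(2r+4,2r+2)$ weighted complete intersection curve $C\subset\mathbb{P}(r+1,4,r,r+2)$ must be irreducible. This curve is precisely the image $\pi(H'_1\cap H'_2)$, where $H_j=(x_j=0)\cap X$ for $j=1,2$ and $H'_j$ is its strict transform on $Y$. The plan is to show $C$ is quasi-smooth and then invoke Lemma~\ref{irreducible 1}. Using Proposition~\ref{2.8}, quasi-smoothness hinges on the presence of enough monomials in the restricted defining equations; for a general defining pair, $f_1|_{x_1=x_2=0}$ contains $x_6^2$ and $x_3x_4^2$, and $f_2|_{x_1=x_2=0}$ contains $x_5^2$ and $x_4x_6$. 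Among the four vertices of $\mathbb{P}(r+1,4,r,r+2)$ only $[0{:}0{:}1{:}0]$ actually lies on $C$, and there the partials $\partial_{x_3}(x_3x_4^2)$ and $\partial_{x_6}(x_4x_6)$ evaluated at $x_4=1$ already produce a rank-$2$ Jacobian; the edges and the generic locus are handled analogously.

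This quasi-smoothness verification is the main obstacle of the whole argument: although each individual check is elementary, one must carefully track which additional monomials of the required degrees are available according to the congruence class of $r$ modulo $2$ and $4$, and confirm that no arithmetic obstruction spoils the argument for admissible values of $r$. Once this is settled, Theorem~\ref{nefness 2} yields $K_Y$ nef with $\nu(Y)\geq n-1=2$, and combined with $K_Y^3=0$ established above we conclude $\nu(Y)=2$, completing the proof.
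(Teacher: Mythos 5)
Your argument follows the paper's proof essentially verbatim: apply Theorem~\ref{nefness 2} with the local coordinates $x_1,x_2,x_5$ placed first and $k=3$, use Remark~\ref{nefrk} to dispose of conditions (1) and (4), observe that condition (2) holds with equality because $K_Y^3=\frac{\alpha^3}{abr}-\frac{\alpha^3}{rab}=0$ (which simultaneously forces $\nu(Y)=2$), and verify condition (3) by showing the general curve $C_{2r+4,2r+2}\subset\mathbb{P}(4,r,r+1,r+2)$ is quasi-smooth, hence irreducible by Lemma~\ref{irreducible 1}. Your treatment of the quasi-smoothness of $C$ (vertex analysis via the monomials $x_6^2$, $x_3x_4^2$, $x_5^2$, $x_4x_6$) is in fact more explicit than the paper's one-line assertion, and is correct.
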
 

\begin{proof}
    This is a direct check of the conditions in Theorem \ref{nefness 2}. In this special situation, condition (1) and (4) are automatically satisfied (See Remark~\ref{nefrk}). For condition (3), it suffice to check a general weighted complete intersection curve $C_{2r+4,2r+2}\subset \bP(4,r,r+1,r+2) $ is irreducible, which is true since from the expression we can see $C$ is actually quasi-smooth. It remains to check condition (2), which is equivalent to compute the canonical volume, by Proposition~\ref{wb},
$$
K_Y^3=\frac{(2r+4)(2r+2)\alpha^3}{4abr(r+1)(r+2)}-\frac{(r-a-b-1)^3}{rab}=0.
$$
Hence $K_Y$ is nef and $\nu(Y)= 2$.
\end{proof}

Note that in order to use the above theorem to find concrete examples of $3$-folds of Kodaria dimension $2$, we need to assume that the weighted complete intersection in this expression is well-formed and quasi-smooth and has only canonical singularity other than the point $\frac{1}{r}(a,b,1)$. This requires some additional restrictions on $a,b$ and $r$. In fact, we have the following tables of Kodaria dimension $2$ $3$-folds.

The description of the contents of the following tables is as follows. Each row contains a well-formed quasismooth weighted complete intersection 
	 $X=X_{2r+2,2r+4}\subset \mathbb{P}(a,b,4,r,r+1,r+2).$
	 The columns of each table contain the following information: 
\begin{center}
	\begin{tabular}{r p{10cm}}
		\hline
		$\alpha$:& The amplitude of $X$, i.e., $r-a-b-1$;\\
		$\deg$:& The degrees of $X$, which is $(2r+2,2r+4)$;\\
		weight:& $(a,b,4,r,r+1,r+2)$;\\
		B-weight:& $\frac{1}{r}(a, b, 1)$, the unique non-canonical singularity to be blown up by applying Theorem~\ref{nefness 2};\\
		conditions: & Restrictions on $r$, where $\modr(r,k)$ means the smallest non-negative residue of $r$ modulo $k$.\\		
		\hline
	\end{tabular}
\end{center}

{\tiny
\begin{longtable}{|l|l|l|l|l|p{5cm}|}
\caption{Type $X_{2r+4,2r+2}\subset \bP(a,b,4,r,r+1,r+2)$}\label{tab kod 2}\\
		\hline
		No.&$\alpha$ & deg & weight & B-weight & conditions \\ \hline
\endfirsthead
\multicolumn{5}{l}{{ {\bf \tablename\ \thetable{}} \textrm{-- continued from previous page}}} \\
\hline 
No.&$\alpha$ & deg & weight & B-weight & conditions \\ \hline
\endhead

 \multicolumn{4}{l}{{\textrm{Continued on next page}}} \\ \hline
\endfoot
\hline 
\endlastfoot
\hline
1 & $r-3$ & $(2r+2,2r+4)$ & $(1,1,4,r,r+1,r+2)$ & $\frac{1}{r}(1,1,1)$ & $ r >3 $ $\mathrm{mod}(r,4)\in \{1,2\} $\\
\hline
2 & $r-4$ & $(2r+2,2r+4)$ & $(1,2,4,r,r+1,r+2)$ & $\frac{1}{r}(1,2,1)$ & $ r >4 $ $\mathrm{mod}(r,4)\in \{1,3\} $\\
\hline
3 & $r-5$ & $(2r+2,2r+4)$ & $(1,3,4,r,r+1,r+2)$ & $\frac{1}{r}(1,3,1)$ & $ r >5 $ $\mathrm{mod}(r,4)\neq 3 $ $\mathrm{mod}(r,3)\neq 0$\\
\hline
4 & $r-6$ & $(2r+2,2r+4)$ & $(1,4,4,r,r+1,r+2)$ & $\frac{1}{r}(1,4,1)$ & $ r >6 $ $\mathrm{mod}(r,4) =1 $ \\
\hline
5 & $r-7$ & $(2r+2,2r+4)$ & $(1,4,5,r,r+1,r+2)$ & $\frac{1}{r}(1,5,1)$ & $ r >7 $ $\mathrm{mod}(r,4) \in \{1,2\} $ $\mathrm{mod}(r,5) \in \{1,2\} $ \\
\hline
6 & $r-8$ & $(2r+2,2r+4)$ & $(1,4,6,r,r+1,r+2)$ & $\frac{1}{r}(1,6,1)$ & $ r >8 $ $\mathrm{mod}(r,4) \in \{1,3\} $ $\mathrm{mod}(r,6) = 1 $ \\
\hline
7 & $r-6$ & $(2r+2,2r+4)$ & $(2,3,4,r,r+1,r+2)$ & $\frac{1}{r}(2,3,1)$ & $ r >6 $ $\mathrm{mod}(r,4) \in \{1,3\} $ $\mathrm{mod}(r,3) \neq 0  $ \\
\hline
8 & $r-8$ & $(2r+2,2r+4)$ & $(2,4,5,r,r+1,r+2)$ & $\frac{1}{r}(2,5,1)$ & $ r >8 $ $\mathrm{mod}(r,4) \in \{1,3\} $ $\mathrm{mod}(r,5) \in \{1,3\}  $ \\
\hline
9 & $r-8$ & $(2r+2,2r+4)$ & $(3,4,4,r,r+1,r+2)$ & $\frac{1}{r}(3,4,1)$ & $ r >8 $ $\mathrm{mod}(r,4) =1 $ $\mathrm{mod}(r,3) \neq 0  $ \\
\hline
10 & $r-9$ & $(2r+2,2r+4)$ & $(3,4,5,r,r+1,r+2)$ & $\frac{1}{r}(3,5,1)$ & $ r >9 $ $\mathrm{mod}(r,4)\neq 3  $ $\mathrm{mod}(r,3) \neq 0  $ $\mathrm{mod}(r,5) = 1  $  \\
\hline
11 & $r-11$ & $(2r+2,2r+4)$ & $(3,4,7,r,r+1,r+2)$ & $\frac{1}{r}(3,7,1)$ & $ r >11 $ $\mathrm{mod}(r,4) \in \{0,1\}  $ $\mathrm{mod}(r,3) \neq 0  $ $\mathrm{mod}(r,7) = 4  $  \\
\hline
12 & $r-10$ & $(2r+2,2r+4)$ & $(4,4,5,r,r+1,r+2)$ & $\frac{1}{r}(4,5,1)$ & $ r >10 $ $\mathrm{mod}(r,4) =1  $ $\mathrm{mod}(r,5) = 1  $   \\
\hline
13 & $r-12$ & $(2r+2,2r+4)$ & $(4,5,6,r,r+1,r+2)$ & $\frac{1}{r}(5,6,1)$ & $ r >12 $ $\mathrm{mod}(r,4) \in \{1,3\}  $  $\mathrm{mod}(r,5)\in \{1,2\}$ $\mathrm{mod}(r,6) = 1  $   \\
\hline
\end{longtable}
}

\begin{exmp}
We illustrate how to manually check the quasi-smoothness and singularities of $X$ if we put certain restrictions to $r$. Take [Table~\ref{tab kod 2}, No.3] for example. If $a=1,b=3$, $\alpha = r-5>0$. Since we need this complete intersection to be quasi-smooth, $\mathrm{mod}(r,3)\neq 0$, otherwise take $I=\{3,r\}$ in Theorem~\ref{2.8} will yield a contradiction. Similarly, if $\mathrm{mod}(r,4)= 3$, assume $r=4k+3$, then take $I=\{4,r+1=4k+4\}$ will also yield a contradiction. So $\mathrm{mod}(r,3)\neq 0$ and $\mathrm{mod}(r,4)\neq 3$ are the necessary conditions for $X$ being quasi-smooth. It's an easy exercise to check that these conditions are also sufficient. Denote $P_0,P_1,\dots,P_5$ to be the vertices. We also note that under these assumptions, the singularities of $X$ only happen on the vertex $P_3$ of the form $\frac{1}{r}(1,3,1)$ and on the edge $P_1P_5$ of type $(1,3)$ if $\mathrm{mod}(r,3)= 1$ or on the edge $P_1P_4$ of type $(1,3)$ if $\mathrm{mod}(r,3)= 2$. If $\mathrm{mod}(r,4)= 0$, there are also singularities on the edge $P_2P_3$ of type $(1,4)$ and if $\mathrm{mod}(r,4)= 2$ there are also singularities on the edge $P_2P_5$ of type $(1,4)$. All those singular points except $Q=P_3$ are terminal. So we can apply Theorem~\ref{construction kod 2} to obtain the corresponding minimal $3$-fold of Kodaria dimension $2$.
\end{exmp}

\begin{rem}

\begin{itemize}
    \item It's natural to ask wether there are more examples of Kodaria dimension $2$ $3$-folds of codimension $2$ and have only one non-canonical point with expression other than the one given here. This is certainly possible, but with the help of a computer program, we only discover examples in this particular form.
    \item There are also some other examples coming from the more general constructions. For example, the minimal model of $X_{28}\subset \bP(1,2,3,7,14)$ obtained by weighted blow up $2$-points of type $\frac{1}{7}(1,2,3)$ also has $\nu(Y)=2$. This can be generalized to the form: $X_{4r}\subset \bP(a,b,c,r,2r)$, with $\alpha = r-a-b-c>0$ and has only two non-canonical singular points of type $\frac{1}{r}(a,b,c)$, with additional assumptions as well.
    \item There are also examples coming from Theorem~\ref{nefness 4}. For example $X_{16,21}\subset \bP(2,3,3,7,8,13)$ is quasi-smooth and has only one non-canonical singular point $\frac{1}{13}(2,3,7)$, which after one weighted blow up still has another non canonical singular point of type $\frac{1}{7}(1,2,3)$. It's easy to check that the conditions of Theorem~\ref{nefness 4} are satisfied, and the final output is a minimal variety of Kodaria dimension $2$.
\end{itemize}    
\end{rem}

\section*{\bf Acknowledgments}

The author expresses his gratitude to his advisor Professor Meng Chen for his great support and encouragement. The author would also like to thank Sicheng Ding, Zhengjie Yu, Peien Du and Hanye Gu for useful discussions.

\end{document}